\documentclass[a4paper,11pt]{article}%

\usepackage[subsection]{algorithm}
\usepackage{algorithmic}
\usepackage{amsfonts}
\usepackage{amsmath}
\usepackage{amssymb}
\usepackage{amsthm}
\usepackage{authblk}
\usepackage{bbm}
\usepackage[dvipsnames, usenames]{color}
\usepackage{commath}
\usepackage[inline]{enumitem}
\usepackage{epsfig}  
\usepackage[latin1]{inputenc}
\usepackage{float}
\usepackage{graphicx}
\usepackage{geometry}\geometry{margin=1in}
\usepackage{leftidx}
\usepackage{mathrsfs}
\usepackage{mathtools}
\usepackage[rightcaption]{sidecap} 
\usepackage{titlesec}
\usepackage{undertilde}
\usepackage{verbatim}
\usepackage[usenames,dvipsnames,svgnames,table]{xcolor}
\usepackage[all]{xy}

\usepackage{hyperref}
\allowdisplaybreaks

\DeclareMathOperator*{\argmax}{arg\,max}

\DeclareMathOperator{\sgn}{sgn}
\newcommand{\defeq}{\vcentcolon=}
\newcommand{\backdefeq}{=\vcentcolon}

\newcommand{\grid}{G}


\newtheorem{assumptions}{Assumptions}[subsection]

\newtheorem{theorem}[assumptions]{Theorem}
\newtheorem{lemma}[assumptions]{Lemma}
\newtheorem{corollary}[assumptions]{Corollary}
\newtheorem{proposition}[assumptions]{Proposition}

\theoremstyle{definition}
\newtheorem{definition}[assumptions]{Definition}
\newtheorem{definitions}[assumptions]{Definitions}

\theoremstyle{remark}
\newtheorem{example}[assumptions]{Example}
\newtheorem{examples}[assumptions]{Examples}
\newtheorem{remark}[assumptions]{Remark}
\newtheorem*{remark*}{Remark}

\newtheorem{notation}[assumptions]{Notation}
\newtheorem*{notation*}{Notation}
\newtheorem{convention}[assumptions]{Convention}
\newtheorem*{convention*}{Convention}

\renewenvironment{abstract}
 {\small
  \begin{center}
  \bfseries \abstractname\vspace{-.05em}\vspace{0pt}
  \end{center}
  \list{}{%
    \setlength{\leftmargin}{.05in}
    \setlength{\rightmargin}{\leftmargin}%
  }%
  \item\relax}
 {\endlist}


\numberwithin{equation}{subsection}
\setcounter{MaxMatrixCols}{10}
\setcounter{secnumdepth}{4}
\allowdisplaybreaks


\title{A fixed-point policy-iteration-type algorithm for symmetric nonzero-sum stochastic impulse control games}
\author{Diego Zabaljauregui\footnote{E-mail: {\tt d.zabaljauregui@lse.ac.uk}}}
\affil{Department of Statistics\\London School of Economics and Political Science} 

\date{ }

\begin{document}
\maketitle

\vspace*{-.9cm}
\begin{abstract}
\noindent Nonzero-sum stochastic differential games with impulse controls offer a realistic and far-reaching modelling framework for applications within finance, energy markets, and other areas, but the difficulty in solving such problems has hindered their proliferation. Semi-analytical approaches make strong assumptions pertaining to very particular cases. To the author's best knowledge, the only numerical method in the literature is the heuristic one we put forward in \cite{ABMZZ} to solve an underlying system of quasi-variational inequalities. Focusing on symmetric games, this paper presents a simpler, more precise and efficient fixed-point policy-iteration-type algorithm which removes the strong dependence on the initial guess and the relaxation scheme of the previous method. A rigorous convergence analysis is undertaken with natural assumptions on the players strategies, which admit graph-theoretic interpretations in the context of weakly chained diagonally dominant matrices. A novel provably convergent single-player impulse control solver is also provided. The main algorithm is used to compute with high precision equilibrium payoffs and Nash equilibria of otherwise very challenging problems, and even some which go beyond the scope of the currently available theory.
\end{abstract}

\noindent\textbf{Keywords:} Stochastic differential games, nonzero-sum games, impulse control, Nash equilibrium, quasi-variational inequality, Howard's algorithm, fixed point policy iteration, weakly chained diagonally dominant matrix.

\section*{Introduction}
\addcontentsline{toc}{section}{Introduction}
Stochastic differential games model the interaction between players whose objective functions depend on the evolution of a certain continuous-time stochastic process. The subclass of impulse games focuses on the case where the players only act at discrete (usually random) points in time by shifting the process. In doing so, each of them incurs into costs and possibly generates ``gains" for the others at the same time. They constitute a generalization of the well-known (single-player) optimal impulse control problems \cite[Chpt.7-10]{OS}, which have found a wide range of applications in finance, energy markets and insurance \cite{B,CCTZ,EH,K,ARS}, among plenty of other fields. 

From a deterministic numerical viewpoint, an impulse control problem entails the resolution of a differential quasi-variational inequality (QVI) to compute the value function and, when possible, retrieve an optimal strategy. Policy-iteration-type algorithms \cite{AF,COS,CMS} undoubtedly occupy an ubiquitous place in this respect, especially in the infinite horizon case.   

The presence of a second player makes matters much more challenging, as one needs to find two optimal (or \textit{equilibrium}) payoffs dependent on one another, and the optimal strategies take the form of Nash equilibria (NEs). 
And while impulse controls give a more realistic setting than ``continuous" controls in applications such as the aforementioned, they normally lead to less tractable and technical models.

It is not surprising then, that the literature in impulse games is limited and mainly focused on the zero-sum case \cite{A1,C,EM}. 
The more general and versatile nonzero-sum instance has only recently received attention. The authors of \cite{ABCCV} consider for the first time a general two-player game where both participants act through impulse controls,\footnote{\cite{CWW,WW} also consider nonzero-sum impulse games but assume the intervention times of the players are known from the outset.} and characterize certain type of equilibrium payoffs and NEs via a system of QVIs by means of a verification theorem. Using this result, they provide the first example of an (almost) fully analytically solvable game, motivated by central banks competing over the exchange rate. The result is generalized to $N$ players in \cite{BCG}, which also gives a semi-analytical solution (i.e., depending on several parameters found numerically) to a concrete cash management problem.\footnote{\cite{BCG} also studies the mean field limit game.} A different, more probabilistic, approach is taken in \cite{FK} to find a semi-analytical solution to a strategic pollution control problem and to prove another verification theorem.  

The previous examples, and the lack of others,\footnote{\cite{ABCCV} also gives semi-analytical solutions to modifications of the linear game when changing the payoffs in a non-symmetric way. To the best of the author's knowledge, these are the only examples available at the time of writing.} give testimony of how difficult it is to explicitly solve nonzero-sum impulse games. The analytical approaches require an educated guess to start with and (with the exception of the \textit{linear game} in \cite{ABCCV}) several parameters need to be solved for in general from highly-nonlinear systems of equations coupled with order conditions. All of this can be very difficult, if not prohibitive, when the structure of the game is not simple enough. Further, all of them (as well as the majority of concrete examples in the impulse control literature) assume linear costs. In general, for nonlinear costs, the state to which each player wants to shift the process when intervening is not unique and it depends on the starting point. This effectively means that infinite parameters may need to be solved for, drastically discouraging this methodology. 

While the need for numerical schemes able to handle nonzero-sum impulse games is obvious, unlike in the single-player case, this is an utterly underdeveloped line of research. Focusing on the purely deterministic approach, solving the system of QVIs derived in \cite{ABCCV} involves handling coupled free boundary problems, further complicated by the presence of nonlinear, nonlocal and noncontractive operators. Additionally, solutions will typically be irregular even in the simplest cases such as the linear game. Moreover, the absence of a viscosity solutions framework such as that of impulse control \cite{S} means that it is not possible to know whether the system of QVIs has a solution (not to mention some form of uniqueness) unless one can explicitly solve it. This is further exacerbated by the fact that even defining such a system requires a priori assumptions on the solution (the \textit{unique impulse property}). This is also the case in \cite{FK}.

To the author's best knowledge, the only numerical method available in the literature is our algorithm in \cite{ABMZZ}, which tackles the system of QVIs by sequentially solving single-player impulse control problems combined with a relaxation scheme. Unfortunately, the choice of the relaxation scheme is not obvious in general and the convergence of the algorithm relies on a good initial guess. It was also observed that stagnation could put a cap on the accuracy of the results, without any simple solution to it. Lastly, while numerical validation was performed, no rigorous convergence analysis was provided.

Restricting attention to the one-dimensional infinite horizon two-player case, this paper puts the focus on certain nonzero-sum impulse games which display a symmetric structure between the players. This class is broad enough to include many interesting applications; no less than the competing central banks problem (whether in its linear form \cite{ABCCV} or others considered in the single bank formulation \cite{AF,CZ,J,MO}), the cash management problem \cite{BCG} (reducing its dimension by a simple change of variables) and the generalization of many impulse control problems to the two-player case. 

For this class of games, an iterative algorithm is presented which substantially improves \cite[Alg.2]{ABMZZ} by harnessing the symmetry of the problem, removing the strong dependence on the initial guess and dispensing with the relaxation scheme altogether. The result is a simpler and more intuitive, precise and efficient routine, for which a convergence analysis is provided. It is shown that the overall routine admits a representation that strongly resembles, both algorithmically and in its properties, that of the combined fixed-point policy-iteration methods \cite{HFL1,Cl}, albeit with nonexpansive operators. Still, a certain contraction property can still be established.

To perform the analysis, we impose assumptions on the discretization scheme used on the system of QVIs and the discrete admissible strategies. These naturally generalize those of the impulse control case \cite{AF} and admit graph-theoretic interpretations in terms of weakly chained diagonally dominant (WCDD) matrices and their recently introduced matrix sequences counterpart \cite{A}. We establish a clear parallel between these discrete type assumptions, the behaviour of the players and the Verification Theorem.

Section \ref{s:analytical_problem} deals with the analytical problem. Starting with an overview of the model (Section \ref{s:general_game}), we recall the Verification Theorem of \cite{ABCCV} and the system of QVIs we want to solve (Section \ref{s:QVIs}). We then give a precise definition of the class of symmetric nonzero-sum impulse games and establish some preliminary results (Section \ref{s:symmetric_games}). 

Section \ref{s:discrete_problem} considers the analogous discrete problem. Section \ref{s:dQVIs} specifies a general discrete version of the system of QVIs, such that any discretization scheme compliant with the assumptions to be imposed will enjoy the same properties. Section \ref{s:sym_algo} presents the iterative algorithm, and shows how the impulse control problems that need to be sequentially solved have a unique solution that can be handled by policy iteration. Additionally, Section \ref{s:solve_one_player} provides an alternative general solver for impulse control problems. It consists of an instance of fixed-point policy-iteration that is noncompliant with the standard assumptions \cite{HFL1} and, as far as the author knows, was not used in the context of impulse control before, other than heuristically in \cite{ABMZZ}. We prove its convergence under the present framework.

Section \ref{s:FPPI} characterizes the overall iterative algorithm as a fixed-point policy-iteration-type method, allowing for reformulations of the original problem and results pertaining to the solutions. The necessary matrix and graph-theoretic definitions and results needed are collected in Appendix \ref{appendix:matrices} for the reader's convenience. Section \ref{s:convergence} carries on with the overall convergence analysis and shows to which extent different sets of reasonable assumptions are enough to guarantee convergence to solutions, convergence of strategies and boundedness of iterates. Sufficient conditions for convergence are proved. Discretization schemes are provided in Section \ref{s:discretization}.

Section \ref{s:numerics} presents the numerical results. In Section \ref{s:games_on_fixed_grid}, a variety of symmetric nonzero-sum impulse games, many seemingly too complicated to be handled analytically, are explicitly solved for equilibrium payoffs and NE strategies with great precision. This is done on a fixed grid, while considering different performance metrics and addressing practical matters of implementation. In the absence of a viscosity solutions framework to establish convergence to analytical solutions as the grid is refined, Section \ref{cv_analytical_sol} performs a numerical validation using the only examples of symmetric solvable games in the literature. Section \ref{s:games_without_NE} addresses the case of games without NEs. Section \ref{s:beyond_verif_theo} tackles games beyond the scope of the currently available theory, displaying discontinuous impulses and very irregular payoffs. The latter give insight and motivate further research into this field.


\section{Analytical continuous-space problem}
\label{s:analytical_problem}
In this section we start by reviewing a general formulation of two-player nonzero-sum stochastic differential games with impulse controls, as considered in \cite{ABCCV}, together with the main theoretical result of the authors: a characterization of certain NEs via a deterministic system of QVIs. 
The indices of the players are denoted $i=1,2$. We will generally use $i$ to indicate a given player and $j$ to indicate their opponent. Since no other type of games is considered in this paper, we will often speak simply of ``games" for brevity. Afterwards we shall specialize the discussion in the (yet to be specified) symmetric instance. 

Throughout the paper, we restrict our attention to the one-dimensional infinite-horizon case. (A similar review is carried out in \cite[Sect.1]{ABMZZ}.) Some of the most technical details concerning the well-posedness of the model are left out for brevity and can be found in \cite[Sect.2]{ABCCV}.

\subsection{General two-player nonzero-sum impulse games}
\label{s:general_game}
Let $(\Omega, \mathcal F, (\mathcal F_t)_{t\geq 0}, \mathbb P)$ be a filtered probability space under the usual conditions supporting a standard one-dimensional Wiener process $W$. We consider two players that observe the evolution of a \textit{state variable} $X$, modifying it when convenient through \textit{controls} of the form $u_i=\{(\tau_i^k,\delta_i^k)\}_{k=1}^\infty$ for $i=1,2$. 
The stopping times $(\tau_i^k)$ are their \textit{intervention times} and the $\mathcal F_{\tau_i^k}$-measurable random variables $(\delta_i^k)$ are their \textit{intervention impulses}. Given controls $(u_1,u_2)$ and a starting point $X_{0^-}=x\in\mathbb R$, we assume $X=X^{x;u_1,u_2}$ has dynamics
\begin{equation}
\label{X}
X_t=x+ \int_0^t\mu(X_s)ds + \int_0^t\sigma(X_s)dW_s + \displaystyle\sum_{k:\ \tau_1^k\leq t}\delta_1^k + \displaystyle\sum_{k:\ \tau_2^k\leq t}\delta_2^k,
\end{equation}
for some given drift and volatility functions $\mu,\sigma:\mathbb R\to \mathbb R$, locally Lipschitz with linear growth.\footnote{See \cite[Def.2.2]{ABCCV} for a precise recursive definition in terms of the strategies.}  

Equation (\ref{X}) states that $X$ evolves as an It\^o diffusion in between the intervention times, and that each intervention consists in shifting $X$ by applying an impulse. It is assumed that the players choose their controls by means of threshold-type \textit{strategies} of the form 
$\varphi_i=(\mathcal I_i,\delta_i)$, where $\mathcal I_i\subseteq\mathbb R$ is a closed set called \textit{intervention (or action) region} and $\delta_i:\mathbb R\to \mathbb R$ is an \textit{impulse function} assumed to be continuous. The complement $\mathcal C_i=\mathcal I_i^c$ is called \textit{continuation (or waiting) region}.
\footnote{In \cite{ABCCV}, strategies are described in terms of continuation regions instead.} 
That is, player $i$ intervenes if and only if the state variable reaches her intervention region, 
by applying an impulse $\delta_i(X_{t^-})$ (or equivalently, shifting $X_{t^-}$ to $X_{t^-}+\delta_i(X_{t^-})$). Further, we impose a priori constraints on the impulses: for each $x\in\mathbb R$ there exists a set $\emptyset\neq\mathcal Z_i(x)\subseteq\mathbb R$ (further specified in Section \ref{s:symmetric_games}) such that $\delta_i(x)\in \mathcal Z_i(x)$ if $x\in\mathcal I_i$.\footnote{In \cite{ABCCV}, $\mathcal Z_i(x)$ is the same for every $x\in\mathbb R$. The generalization in this paper is a standard one in impulse control and will prove useful in the sequel. The results in \cite{ABCCV} still hold with the same proofs.} We also assume the game has no end and player 1 has the priority should they both want to intervene at the same time. (The latter will be excluded later on; see Definition \ref{sym_game} and the remarks that follow it.)

Given a starting point and a pair strategies, the (expected) \textit{payoff} of player $i$ is given by
\begin{equation*}
\label{Ji}
J_i(x;\varphi_1,\varphi_2) \defeq \mathbb E \left[ \int_0^\infty e^{-\rho_i s} f_i(X_s)ds - \sum_{k=1}^\infty e^{-\rho_i \tau_i^k} c_i \big( X_{(\tau_i^k)^-}, \delta_i^k \big) + \sum_{k=1}^\infty e^{-\rho_i \tau_j^k} g_i \big( X_{(\tau_j^k)^-}, \delta_j^k \big)\right],
\end{equation*}
with $X=X^{x;u_1,u_2}=X^{x;\varphi_1,\varphi_2}$. For player $i$, $\rho_i>0$ represents her (subjective) \textit{discount rate}, $f_i:\mathbb R\to\mathbb R$ her \textit{running payoff}, 
$c_i:\mathbb R^2\to(0,+\infty)$ her \textit{cost of intervention} and $g_i:\mathbb R^2\to\mathbb R$ her \textit{gain due to her opponent's intervention} (not necessarily non-negative).
The functions $f_i,c_i,g_i$ are assumed to be continuous.

Throughout the paper, only admissible strategies are considered. Briefly, $(\varphi_1,\varphi_2)$ is \textit{admissible} if it gives well-defined payoffs for all $x\in\mathbb R$, $\|X\|_\infty$ has finite moments and, although each player can intervene immediately after the other, infinite simultaneous interventions are precluded.\footnote{More precisely, these would be $\mathbb R$-admissible strategies. See \cite[Def.2.5]{ABCCV} for more details.} As an example, if the running payoffs have polynomial growth, the ``never intervene strategies"  $\varphi_1=\varphi_2=(\emptyset,\emptyset\hookrightarrow\mathbb R)$ are admissible and the game can be played.

Given a game, we want to know whether it admits some Nash equilibrium and how to compute it. Recall that a pair of strategies $(\varphi_1^*,\varphi_2^*)$ is a \textit{Nash equilibrium} (NE) if for every admissible $(\varphi_1,\varphi_2)$,
$$
J_1(x;\varphi_1^*,\varphi_2^*)\geq J_1(x;\varphi_1,\varphi_2^*)\quad\mbox{and}\quad J_2(x;\varphi_1^*,\varphi_2^*)\geq J_2(x;\varphi_1^*,\varphi_2),
$$
i.e., no player can gain from a unilateral change of strategy. If one such NE exists, we refer to $(V_1,V_2)$, with $V_i(x)= J_i(x;\varphi_1^*,\varphi_2^*)$, as a pair of \textit{equilibrium payoffs}.  

\subsection{General system of quasi-variational inequalities}
\label{s:QVIs}
To present the system of QVIs derived in \cite{ABCCV}, we need to define first the \textit{intervention operators}. For any $V_1, V_2:\mathbb R\to\mathbb R$ and $x\in\mathbb R$, the \textit{loss operator} of player $i$ is defined as 
\begin{equation}
\label{Mi}
\mathcal M_i V_i(x)\defeq\displaystyle\sup_{\delta\in \mathcal Z_i(x)}\{V_i(x+\delta) - c_i(x,\delta)\}.\footnote{Although we could have $\mathcal M_iV_i(x)=+\infty$, this will be excluded when enforcing the system of QVIs (\ref{QVIs}).}
\end{equation}
When applied to an equilibrium payoff, the loss operator $\mathcal M_i$ gives a recomputed present value for player $i$ due to the cost of her own intervention. Given the optimality of the NEs, one would intuitively expect that $\mathcal M_i V_i\leq V_i$ for equilibrium payoffs and that the equality is attained only when it is optimal for player $i$ to intervene. Under this logic:
\begin{definition}
\label{UIP}
We say that the pair $(V_1,V_2)$ has the \textit{unique impulse property (UIP)} if for each $i=1,2$ and $x\in \{\mathcal M_i V_i= V_i\}$, there exists a unique impulse, denoted $\delta_i^*(x)=\delta_i^*(V_i)(x)\in \mathcal Z_i(x)$, that realizes the supremum in (\ref{Mi}).\footnote{We do not require the UIP to hold outside of $\{\mathcal M_i V_i= V_i\}$, as this is not the case for equilibrium payoffs in many examples, such as the linear game with constant costs/gains. Proofs in \cite{ABCCV} carry through unaltered.}
\end{definition}
If $(V_1, V_2)$ enjoys the UIP, we define the \textit{gain operator} of player $i$ as
\begin{equation}
\label{Hi}
\mathcal H_i V_i(x)\defeq V_i(x+\delta_j^*(x)) + g_i(x,\delta_j^*(x)),\quad\mbox{ for }x\in \{\mathcal M_j V_j= V_j\}
\end{equation}
When applied to equilibrium payoffs, the gain operator $\mathcal H_i$ gives a recomputed present value for player $i$ due to her opponent's intervention.

Finally, let us denote by $\mathcal A$ the infinitesimal generator of $X$ when uncontrolled, i.e., 
\begin{equation*}
\mathcal AV(x)\defeq \frac{1}{2}\sigma^2(x) V''(x) + \mu(x)V'(x),
\end{equation*}
for any $V:\mathbb R\to\mathbb R$ which is $C^2$ at some open neighborhood of a given $x\in\mathbb R$. We assume this regularity holds whenever we compute $\mathcal AV(x)$ for some $V$ and $x$. The following Verification Theorem, due to \cite[Thm.3.3]{ABCCV}, states that if a regular enough solution $(V_1,V_2)$ to a certain system of QVIs exists, then it must be a pair of equilibrium payoffs, and a corresponding NE can be retrieved. We state here a simplified version that applies to the one-dimensional infinite-horizon games at hand.\footnote{Unlike in \cite{ABCCV}, there is no terminal condition in the system of QVIs and the assumption that $\partial \mathcal C^*_i$ be a Lipschitz surface is trivially satisfied for an open $\mathcal C^*_i\subseteq\mathbb R$, as it is a countable union of disjoint open intervals.}

\begin{theorem}[\textbf{General system of QVIs}]
\label{verification}
Given a game as in Section \ref{s:general_game}, let $V_1,V_2:\mathbb R\to\mathbb R$ be pair of functions with the UIP, such that for any $i,j \in \{1,2\}$, $i \neq j$:
\begin{equation}
\label{QVIs}
\begin{cases}
	\begin{aligned}
		& \mathcal M_j V_j - V_j \leq 0 && \text{on} \,\,\, \mathbb R   \\
		& \mathcal H_i V_i- V_i=0 && \text{on} \,\,\, \{\mathcal M_j V_j -  V_j = 0\}\backdefeq\mathcal I^*_j  \\
		& \max\big\{\mathcal A V_i -\rho_i V_i + f_i, \mathcal M_i V_i- V_i \}=0 && \text{on} \,\,\, \{\mathcal M_j V_j - V_j < 0\}\backdefeq\mathcal C^*_j
	\end{aligned}
\end{cases}
\end{equation}
and $V_i\in C^2(\mathcal C^*_j\backslash\partial\mathcal C^*_i)\cap C^1(\mathcal C^*_j)\cap C(\mathbb R)$ has polynomial growth and bounded second derivative on some reduced neighbourhood of $\partial\mathcal C^*_i$. Suppose further $\big((\mathcal I^*_i,\delta_i^*)\big)_{i=1,2}$ are admissible strategies.\footnote{For consistency with the strategies' definition, one should assume that $\delta^*$ has been continuously extended to $\mathbb R$. The conclusion is unaffected by the choice of the extension.} 
$$\mbox{Then,}\quad
(V_1,V_2)\mbox{ are equilibrium payoffs attained at a NE }\big((\mathcal I^*_i,\delta_i^*)\big)_{i=1,2}.$$
\end{theorem}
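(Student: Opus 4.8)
The plan is to fix a player, say player~$1$, an admissible competing pair of the form $(\varphi_1,\varphi_2^*)$ with $\varphi_2^* = (\mathcal I_2^*,\delta_2^*)$, and show $V_1(x)\ge J_1(x;\varphi_1,\varphi_2^*)$ for all $x$, with equality when $\varphi_1 = \varphi_1^* = (\mathcal I_1^*,\delta_1^*)$; the argument for player~$2$ is symmetric (modulo the priority convention, which only matters on a set we will see is irrelevant). The core is a standard dynamic-programming / It\^o argument against the ``almost-supermartingale'' built from $V_1$ along the controlled process $X = X^{x;\varphi_1,\varphi_2^*}$. First I would localize: for a starting point $x$ and a pair of strategies, let $(\tau_\ell)_{\ell\ge 1}$ be the ordered superposition of both players' intervention times and, for $R>0$, let $\theta_R = \inf\{t:|X_t|\ge R\}$. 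On each stochastic interval $[\tau_\ell,\tau_{\ell+1})$ the process is an uncontrolled It\^o diffusion, so I would apply It\^o's formula to $e^{-\rho_1 t}V_1(X_t)$ between consecutive intervention times up to $\theta_R$. Here the regularity hypothesis $V_1\in C^2(\mathcal C_2^*\setminus\partial\mathcal C_1^*)\cap C^1(\mathcal C_2^*)\cap C(\mathbb R)$ together with the bounded-second-derivative condition near $\partial\mathcal C_1^*$ is exactly what is needed to justify It\^o (the finitely many crossings of $\partial\mathcal C_1^*$ contribute nothing since $V_1$ is $C^1$ there and $V_1''$ is bounded on a reduced neighbourhood — one can use a mollified version or the It\^o--Tanaka/Meyer formula and pass to the limit).

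Next I would assemble the estimate. On $\mathcal C_2^*$ the third line of \eqref{QVIs} gives $\mathcal A V_1 - \rho_1 V_1 + f_1 \le 0$, so the $dt$-part of $d(e^{-\rho_1 t}V_1(X_t))$ is dominated by $-e^{-\rho_1 t}f_1(X_t)\,dt$; on $\mathcal I_2^*$ the process is instantaneously pushed by player~$2$, and the second line $\mathcal H_1 V_1 - V_1 = 0$ says precisely $V_1(x+\delta_2^*(x)) + g_1(x,\delta_2^*(x)) = V_1(x)$, i.e. each jump caused by the opponent changes $e^{-\rho_1 t}V_1(X_t)$ by exactly $-e^{-\rho_1\tau}g_1(\cdot)$, matching the gain term in $J_1$. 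For player~$1$'s own interventions, the first line $\mathcal M_1 V_1 - V_1 \le 0$ (restricted to $\mathcal C_2^*$ via the $\max$, and also holding on $\mathcal I_2^*$ once one checks the jump of player~$1$ there is governed by admissibility) gives $V_1(X_{\tau^-}+\delta) - c_1(X_{\tau^-},\delta) \le \mathcal M_1 V_1(X_{\tau^-}) \le V_1(X_{\tau^-})$ for any admissible impulse $\delta$, so each of player~$1$'s jumps decreases $e^{-\rho_1 t}V_1$ by at least $e^{-\rho_1\tau}c_1(\cdot)$. Summing over $[0,\theta_R\wedge T]$, taking expectations (the local-martingale part has zero expectation after localization by $\theta_R$), rearranging, and then sending $T\to\infty$ and $R\to\infty$ — using the polynomial growth of $V_1$ and the admissibility assumption that $\|X\|_\infty$ has finite moments, plus $\rho_1>0$ to kill $e^{-\rho_1 T}\mathbb E[V_1(X_T)]$ and the boundary terms at $\theta_R$ — yields $V_1(x)\ge J_1(x;\varphi_1,\varphi_2^*)$.

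For the equality (optimality of $\varphi_1^*$), I would run the same computation with $\varphi_1 = \varphi_1^*$: now on $\mathcal C_2^*$ the strategy waits inside $\mathcal C_1^*$, where the third line forces $\mathcal A V_1 - \rho_1 V_1 + f_1 = 0$ (since there $\mathcal M_1 V_1 - V_1 < 0$, the $\max$ must be achieved by the first argument), and player~$1$ intervenes exactly on $\mathcal I_1^*=\{\mathcal M_1 V_1 = V_1\}$ using the unique maximizing impulse $\delta_1^*(X_{\tau^-})$, so $V_1(X_{\tau^-}+\delta_1^*) - c_1(X_{\tau^-},\delta_1^*) = \mathcal M_1 V_1(X_{\tau^-}) = V_1(X_{\tau^-})$ — every inequality above becomes an equality, giving $V_1(x) = J_1(x;\varphi_1^*,\varphi_2^*)$. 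Since $\big((\mathcal I_i^*,\delta_i^*)\big)_{i=1,2}$ is admissible by hypothesis, $(V_1,V_2)$ are the corresponding equilibrium payoffs.

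\textbf{Main obstacle.} The delicate points are all at the level of rigor rather than ideas: (i) justifying It\^o's formula across $\partial\mathcal C_1^*$ with only $C^1$-plus-bounded-$V_1''$ regularity — this needs a careful approximation/Meyer--It\^o argument, and is the step most likely to require real work; (ii) the interface between the two regions $\mathcal C_2^*$ and $\mathcal I_2^*$ and the priority convention — one must verify that the ``simultaneous intervention'' set is negligible for admissible strategies (this is precisely why admissibility precludes infinite simultaneous interventions) so that the bookkeeping of jumps is unambiguous; and (iii) the limiting arguments $T\to\infty$, $R\to\infty$, which rely on the moment bounds built into admissibility and on $\rho_i>0$. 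All of this is modelled on the single-player impulse-control verification theorem and on \cite[Thm.3.3]{ABCCV}, and since the statement here is explicitly a specialization of that result, I would ultimately cite \cite{ABCCV} for the technical lemmas and only spell out the simplifications afforded by the one-dimensional infinite-horizon setting (no terminal condition, $\partial\mathcal C_i^*$ automatically Lipschitz).
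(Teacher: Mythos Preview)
Your sketch is the standard verification-theorem argument (It\^o between interventions, supermartingale inequality from the QVIs, equality at the candidate optimum, localization and limits), and it is correct in outline; you also correctly identify the technical pressure points. Note, however, that the paper does \emph{not} supply its own proof of this theorem: it is stated as a specialization of \cite[Thm.~3.3]{ABCCV} and simply cited, with the only remark being that in one dimension on an infinite horizon the terminal condition disappears and the Lipschitz-boundary hypothesis on $\partial\mathcal C_i^*$ is automatic. So there is nothing to compare against beyond the citation, and your closing sentence---defer to \cite{ABCCV} for the technical lemmas and record the one-dimensional simplifications---is exactly what the paper does.
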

The first equation of system (\ref{QVIs}) states that at an equilibrium, a player cannot increase her own payoff by a unilateral intervention. One therefore expects that the equality $\mathcal M_j V_j = V_j$ will only hold when player $j$ intervenes, or in other words, when the value she gains can compensate the cost of her intervention. Consequently, the second equation says that a gain results from the opponent's intervention. Finally, the last one, means that when the opponent does not intervene, each player faces a single-player impulse control problem. 

We conclude this section with some final observations that will be relevant in the sequel: 
\begin{remark}
\label{r:zero_impulse}
An immediate consequence of assuming strictly positive costs is that intervening at any state with a null impulse reduces the payoff of the acting player and is therefore suboptimal. This is also displayed in system (\ref{QVIs}): if at some state $x$, $\mathcal M_jV_j(x)$ was realized for $\delta=0$, then $\mathcal M_jV_j(x)= V_j(x) + c_j(x,0)<V_j(x)$. At the same time, allowing for vanishing costs often leads to degenerate games in the current framework \cite[Sect.4.4]{ABCCV}. Hence, assuming $c_j>0$ is quite reasonable.
\end{remark}

\begin{remark}
\label{r:concave_costs}
Consider the case of nonegative impulses and cost functions being strictly concave in the impulse as in \cite{C, EM}. That is, $c_i(x,\delta+\bar\delta)<c_i(x,\delta)+c_i(x+\delta,\bar\delta)$ for all $x\in\mathbb R,\ \delta,\bar\delta\geq 0$. This models the situation in which simultaneous interventions are more expensive than a single one to the same effect. In such cases, it is easy to see that in the context of Theorem \ref{verification}, player $i$ will only shift the state variable towards her continuation region.\footnote{Let $x\in\mathcal I^*_i$ and suppose $y_i^*\defeq x+\delta^*_i(x)\in\mathcal I^*_i$. Set $y_i^{**}\defeq y_i^* + \delta^*_i(y_i^*)$. Then, by the UIP, the definitions of $\delta^*_i(x)$  and $\mathcal I^*_i$, and the concavity of the cost: 
$V_i(y_i^{**})-c_i(x,\delta^*_i(x)+\delta^*_i(y_i^*))\leq V_i(y_i^*)-c_i(x,\delta^*_i(x))=V_i(y_i^{**})-c_i(y_i^*,\delta^*_i(y_i^*))-c_i(x,\delta^*_i(x))<V_i(y_i^{**})-c_i(x,\delta^*_i(x)+\delta^*_i(y_i^*))$
, which is a contradiction.}
\end{remark}

\subsection{Symmetric two-player nonzero-sum impulse games}
\label{s:symmetric_games}
We want to focus our study on games which present a certain type of symmetric structure between the players, generalising the linear game \cite{ABCCV} and the cash management game \cite{BCG}.\footnote{The latter can be reduced to one dimension with the change of variables $x=x_1-x_2$. Additionally, we will restrict attention to unidirectional impulses, as these yield the ``most relevant" NE found in \cite{BCG}.}

\begin{notation*}
The type of games presented in Section \ref{s:general_game} are fully defined by setting the drift, volatility, impulse constraints, discount rates, running payoffs, costs and gains. In other words, any such game can be represented by a tuple $\mathcal G=(\mu,\sigma,\mathcal Z_i,\rho_i,f_i,c_i,g_i)_{i=1,2}$.
\end{notation*} 

\begin{definitions}
\label{sym_game}
We say that a game $\mathcal G=(\mu,\sigma,\mathcal Z_i,\rho_i,f_i,c_i,g_i)_{i=1,2}$ is \textit{symmetric (with respect to zero)} if
	\begin{enumerate}[label=(S\arabic*)]
		\item \label{sym_dynamics} $\mu$ is odd and $\sigma$ is even (i.e., $\mu(x)=-\mu(-x)$ and $\sigma(x)=\sigma(-x)$ for all $x\in\mathbb R$).
		\item \label{sym_constraints} $-\mathcal Z_2(-x)=\mathcal Z_1(x)\subseteq [0,+\infty)$ for all $x\in\mathbb R$ and $\mathcal Z_1(x)=\{0\}=\mathcal Z_2(-x)$ for all $x\geq 0$.
		\item \label{sym_rest} $\rho_1=\rho_2$, $f_1(x)=f_2(-x)$, $c_1(x,\delta)=c_2(-x,-\delta)$ and $g_1(x,-\delta)=g_2(-x,\delta)$, for all $\delta\in\mathcal Z_1(x),\ x\in\mathbb R$.
	\end{enumerate}
We say that the game is \textit{symmetric with respect to }$s$ (for some $s\in\mathbb R$), if the $s$-\textit{shifted} game $(\mu(x+s),\sigma(x+s),\mathcal Z_i(x+s),\rho_i,f_i(x+s),c_i(x+s,\delta),g_i(x+s,\delta))_{i=1,2}$ is symmetric. We refer to $x=s$ as a \textit{symmetry line} of the game. 
\end{definitions}
Condition \ref{sym_dynamics} is necessary for the state variable to have symmetric dynamics. In particular, together with \ref{sym_rest}, it guarantees symmetry between solutions of the Hamilton--Jacobi--Bellman (HJB) equations of the players when there are no interventions, i.e.,
$$
V=V^*(x)\mbox{ solves }\mathcal AV -\rho_1V +f_1=0\quad\mbox{ if and only if }\quad V=V^*(-x)\mbox{ solves }\mathcal AV -\rho_2V +f_2=0.
$$
\begin{examples}
The most common examples of It\^o diffusions satisfying this assumption are the scaled Brownian motion (symmetric with respect to zero) and the Ornstein--Uhlenbeck (OU) process (symmetric with respect to its long term mean). 
\end{examples}

Condition \ref{sym_rest} is self-explanatory, while \ref{sym_constraints} is only partly so. Indeed, although symmetric constraints on the impulses $\mathcal Z_1(x)=-\mathcal Z_2(-x)$ should clearly be a requirement, the rest of \textit{(ii)} is in fact motivated by the numerical method to be presented and the type of problems it can handle. On the one hand, the third equation of the QVIs system (\ref{QVIs}) implies that a stochastic impulse control problem for player $i$ needs to be solved on $\mathcal C^*_j$. The unidirectional impulses assumption is a common one for the convergence of policy iteration algorithms in impulse control.\footnote{See this assumption in \cite[Sect.4]{CMS} or \cite[Sect.10.4.2]{OS}, its graph-theoretic counterpart in \cite[Asm.(H2) and Thm.4.3]{AF}, and a counterexample of convergence in its absence in \cite[Ex.4.9]{AF}.} However, it is often too restrictive for many interesting applications,\footnote{See \cite{RZ, RZ1} for a novel, globally convergent, combined policy iteration and penalization scheme for general impulse control HJBQVIs.} such as when the controller would benefit the most from keeping the state variable within some bounded interval instead of simply keeping it ``high" or ``low" (see, e.g., \cite{B} and \cite[Sect.6.1]{AF}). Interestingly enough, assuming unidirectional impulses turns out to be less restrictive when there is a second player present, with an opposed objective. Indeed, it can happen that each player needs not to intervene in one of the two directions, and can instead rely on her opponent doing so, while capitalising a gain rather than paying a cost. See examples in Section \ref{s:games_on_fixed_grid} with quadratic and degree four running payoffs. 

On the other hand, $\mathcal Z_1(x)=\{0\}=\mathcal Z_2(-x)$ for all $x\geq 0$ means that we can assume without loss of generality that the admissible intervention regions do not cross over the symmetry line; i.e., $\mathcal I_1\subseteq(-\infty,0)$ and $\mathcal I_2\subseteq (0,+\infty)$ for every pair of strategies. (See Remark \ref{r:zero_impulse}.) This guarantees in particular that the players never want to intervene at the same time and the priority rule can be disregarded.

There are different reasons why the last mentioned condition is less restrictive than it first appears to be. It is not uncommon to assume connectedness of either intervention or continuation regions (or other conditions implying them) both in impulse control \cite{E} and nonzero-sum games \cite[Sect.1.2.1]{DFM}. 
The same can be said for assumptions that prevent the players from intervening in unison \cite[Sect.1.2.1]{DFM},\cite[Rmk.6.5]{C}.
\footnote{In \cite[Rmk.6.5]{C}, assumptions are given to guarantee the zero-sum analogous to $\mathcal M_iV_i\leq \mathcal H_iV_i$, with a strict inequality if such assumptions are slightly strengthened. These inequalities, in the context of Theorem \ref{verification}, imply that the equilibrium intervention regions cannot intersect.} In the context of symmetric games and payoffs (see Lemma \ref{sym_lemma}) such assumptions would necessarily imply the intervention regions need to be on opposed sides of the symmetry line. Additionally, without any further requirements, strategies such that $\mathcal I_1\supseteq(-\infty,0]$ and $\mathcal I_2\supseteq [0,+\infty)$ would be inadmissible in the present framework, as per yielding infinite simultaneous impulses.

\begin{definitions}
\label{sym_strategies}
Given a symmetric game, we say that $\big((\mathcal I_i,\delta_i)\big)_{i=1,2}$ are \textit{symmetric strategies (with respect to zero)} if $\mathcal I_1=-\mathcal I_2$ and $\delta_1(x)=-\delta_2(-x)$. 
Given a symmetric game with respect to some $s\in\mathbb R$, we say that$\big((\mathcal I_i,\delta_i)\big)_{i=1,2}$ are \textit{symmetric strategies with respect to $s$} if $\big((\mathcal I_i-s,\delta_i(x+s))\big)_{i=1,2}$ are symmetric, and we refer to $x=s$ as a \textit{symmetry line} of the strategies. 
\end{definitions}

\begin{definition}
\label{sym_functions}
We say that $V_1,V_2:\mathbb R\to\mathbb R$ are \textit{symmetric functions (with respect to zero)} if $V_1(x)=V_2(-x)$. We say that they are \textit{symmetric functions with respect to $s$} (for some $s\in\mathbb R$) if $V_1(x+s),V_2(x+s)$ are symmetric, and we refer to $x=s$ as a \textit{symmetry line} for $V_1,V_2$. 
\end{definition}

\begin{remark}
\label{r:sym_NE}
Definition \ref{sym_strategies} singles out strategies that share the same symmetry line with the game. For the linear game, for example, the authors find infinitely many NEs \cite[Prop.4.7]{ABCCV}, each presenting symmetry with respect to some point $s$, but only one for $s=0$ (hence, symmetric in the sense of Definition \ref{sym_strategies}). At the same time, the latter is the only one for which the corresponding equilibrium payoffs $V_1,V_2$ have a symmetry line as per Definition \ref{sym_functions}. The same is true for the cash management game \cite{BCG}.
\end{remark}

\begin{remark}
Throughout the paper we will work only with games symmetric with respect to zero, to simplify the notation. Working with any other symmetry line amounts simply to shifting the game and results back and forth.
\end{remark}

\begin{lemma}
\label{sym_lemma}
For any symmetric game, strategies $(\varphi_1,\varphi_2)$ and functions $V_1,V_2:\mathbb R\to\mathbb R$:
\begin{enumerate}[label=(\roman*)]
\item If $V_1,V_2$ are symmetric, then $\mathcal M_1V_1,\mathcal M_2V_2$ are symmetric.
\item If $V_1,V_2$ are symmetric and have the UIP, then $\delta_1^*(x)=-\delta_2^*(-x)$ and $\mathcal H_1V_1,\mathcal H_2V_2$ are symmetric. 
\item If $(\varphi_1,\varphi_2)$ are symmetric, 
then $J_1(\cdot;\varphi_1,\varphi_2),J_2(\cdot;\varphi_1,\varphi_2)$ are symmetric. 
\item If $V_1,V_2$ are as in Theorem \ref{verification} and $(\varphi^*_1,\varphi^*_2)$ is the corresponding NE of the theorem, then $(\varphi^*_1,\varphi^*_2)$ are symmetric if and only if $V_1,V_2$ are symmetric. 
\end{enumerate}
\end{lemma}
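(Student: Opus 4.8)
The plan is to establish the four items in order, with items (i)--(iii) being largely bookkeeping around the definitions and item (iv) following immediately from them and Theorem \ref{verification}. For (i) I would perform the change of variables $\delta=-\delta'$ in the supremum defining $\mathcal M_2V_2(-x)$ in (\ref{Mi}); by \ref{sym_constraints} this is a bijection of $\mathcal Z_2(-x)$ onto $\mathcal Z_1(x)$, and after substituting $V_2(-(x+\delta'))=V_1(x+\delta')$ (symmetry of $(V_1,V_2)$) and $c_2(-x,-\delta')=c_1(x,\delta')$ (\ref{sym_rest}) one reads off $\mathcal M_2V_2(-x)=\mathcal M_1V_1(x)$, with the convention that this also covers a common value $+\infty$. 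For (ii), combining (i) with $V_1(x)=V_2(-x)$ shows that $\mathcal I_1^*:=\{\mathcal M_1V_1=V_1\}$ and $\mathcal I_2^*:=\{\mathcal M_2V_2=V_2\}$ are each other's reflection; on these sets the same change of variables carries the function maximised in (\ref{Mi}) for player $1$ at $x$ onto the one for player $2$ at $-x$, hence sends maximiser to maximiser, and since the UIP makes each maximiser unique we obtain $\delta_1^*(x)=-\delta_2^*(-x)$. Plugging this together with the symmetry of $(V_1,V_2)$ and of the gains (\ref{sym_rest}) into (\ref{Hi}) then gives $\mathcal H_1V_1(x)=\mathcal H_2V_2(-x)$.

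The substantive step is (iii), which I would prove by reflecting the state process. Working from the recursive construction of $X^{x;\varphi_1,\varphi_2}$ (\cite[Def.2.2]{ABCCV}), the map $y\mapsto-y$ conjugates the controlled dynamics: the uncontrolled part is reflection-covariant once $W$ is replaced by $-W$, by \ref{sym_dynamics}, and because $(\varphi_1,\varphi_2)$ is symmetric in the sense of Definition \ref{sym_strategies}, $-X$ hits $\mathcal I_1$ exactly when $X$ hits $\mathcal I_2$ and then jumps by $\delta_1$ evaluated at its pre-jump value, and symmetrically with the players interchanged --- here the part of \ref{sym_constraints} forcing $\mathcal I_1\subseteq(-\infty,0)$, $\mathcal I_2\subseteq(0,+\infty)$ is what rules out simultaneous interventions, so the priority convention is never invoked. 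Therefore $-X^{x;\varphi_1,\varphi_2}$, equipped with the intervention times and impulses $\{(\tau_2^k,-\delta_2^k)\}_k$ for player $1$ and $\{(\tau_1^k,-\delta_1^k)\}_k$ for player $2$, equals $X^{-x;\varphi_1,\varphi_2}$ driven by $-W$; since $-W\overset{d}{=}W$ this is a distributional identity, and inserting it into the expectation defining $J_2(-x;\varphi_1,\varphi_2)$ and applying \ref{sym_rest} termwise ($\rho_1=\rho_2$, $f_2(-x)=f_1(x)$, and the cost and gain symmetries at the impulses actually applied) turns it into $J_1(x;\varphi_1,\varphi_2)$.

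Item (iv) is then immediate: if $(V_1,V_2)$ is symmetric, (i) gives $\mathcal I_1^*=-\mathcal I_2^*$ and (ii) gives $\delta_1^*(x)=-\delta_2^*(-x)$ on $\mathcal I_1^*$, so extending $\delta_1^*$ continuously to $\mathbb R$ in any way and then setting $\delta_2^*(x):=-\delta_1^*(-x)$ (consistent with the values already determined on $\mathcal I_2^*$) makes $\big((\mathcal I_i^*,\delta_i^*)\big)_{i=1,2}$ symmetric per Definition \ref{sym_strategies}; conversely, if the NE $(\varphi_1^*,\varphi_2^*)$ of Theorem \ref{verification} is symmetric then (iii) makes $J_1(\cdot;\varphi_1^*,\varphi_2^*)$ and $J_2(\cdot;\varphi_1^*,\varphi_2^*)$ symmetric, and since Theorem \ref{verification} identifies each $V_i$ with $J_i(\cdot;\varphi_1^*,\varphi_2^*)$, the pair $(V_1,V_2)$ is symmetric. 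I expect (iii) to be the only real obstacle --- specifically, checking that the hitting-time structure of the threshold strategies and the recursive definition of the controlled state are genuinely compatible with the reflection and the role swap, and that admissibility (a standing hypothesis) keeps every integral and series well-defined when starting from $-x$ --- while (i), (ii) and (iv) are routine once the definitions are unwound.
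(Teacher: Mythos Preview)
Your proposal is correct and follows essentially the same approach as the paper: items (i)--(ii) are unwound directly from the definitions via the change of variables $\delta\mapsto-\delta$, item (iii) is argued by showing $-X^{x;\varphi_1,\varphi_2}$ has the same law as $X^{-x;\varphi_1,\varphi_2}$ through the recursive construction and the reflection $W\mapsto-W$, and item (iv) is deduced from (i)--(iii) together with Theorem~\ref{verification}. Your write-up is in fact more detailed than the paper's (which dismisses (i), (ii) as ``straightforward'' and (iv) as ``a consequence of (i), (ii), (iii)''), but the underlying ideas coincide.
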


\begin{proof}
\textit{(i)} and \textit{(ii)} are straightforward from the definitions. 

To see \textit{(iii)}, one can check with the recursive definition of the state variable \cite[Def.2.2]{ABCCV} that $X^{-x;\varphi_1,\varphi_2}$ has the same law as $-X^{x;\varphi_1,\varphi_2}$ (recall that the continuation regions are simply disjoint unions of open intervals). Noting also that intervention times and impulses are nothing but jump times and sizes of $X$, one concludes that $J_1(x;\varphi_1,\varphi_2)=J_2(-x;\varphi_1,\varphi_2)$, as intended.  

Finally, \textit{(iv)} is a consequence of \textit{(i)}, \textit{(ii)} and \textit{(iii)}. 
\end{proof}

\begin{convention}
\label{convention}
In light of Lemma \ref{sym_lemma}, for any symmetric game we will often lose the player index from the notations and refer always to 
quantities corresponding to player 1,\footnote{Note that $g$ will denote $g(x,\delta)\defeq g_1(x,-\delta)$, as $\delta\geq 0$ for player 1, yet $g_1$ depends on the (negative) impulse of player 2.} henceforth addressed simply as ``the player". Player 2 shall be referred to as ``the opponent". Statements like ``$V$ has the UIP" or ``$V$ is a symmetric equilibrium payoff" are understood to refer to $(V(x),V(-x))$. Likewise, ``$(\mathcal I,\delta)$ is admissible" or ``$(\mathcal I,\delta)$ is a NE" refer to the pair $(\mathcal I,\delta(x)),(-\mathcal I,-\delta(-x))$.
\end{convention}

Due to their general lack of uniqueness, it is customary in game theory to restrict attention to specific type of NEs, depending on the problem at hand (see for instance \cite{HS} for a treatment within the classical theory). Motivated by Lemma \ref{sym_lemma} \textit{(iii)} and \textit{(iv)}, and by Remark \ref{r:sym_NE}, one can arguably state that symmetric NEs are the most meaningful for symmetric games. Furthermore, Lemma \ref{sym_lemma} implies that for symmetric games, one can considerably reduce the complexity of the full system of QVIs (\ref{QVIs}) provided the conjectured NE (or equivalently, the pair of payoffs) is symmetric. Using Convention \ref{convention}, Theorem \ref{verification} and Lemma \ref{sym_lemma} give:
\begin{corollary}[\textbf{Symmetric system of QVIs}]
\label{coro_sym_QVIs}
Given a symmetric game as in Definition \ref{sym_game}, let $V:\mathbb R\to\mathbb R$ be a function with the UIP, such that:
\begin{equation}
\label{sym_QVIs}
\begin{cases}
	\begin{aligned}
		& \mathcal H V- V=0 && \text{on} \,\,\, -\{\mathcal M V -  V = 0\}\backdefeq -\mathcal I^*\\
		& \max\big\{\mathcal A V -\rho V + f, \mathcal M V- V \}=0 && \text{on} \,\,\, -\{\mathcal M V - V < 0\}\backdefeq-\mathcal C^*
	\end{aligned}
\end{cases}
\end{equation} 
and $V\in C^2(-\mathcal C^*\backslash\partial\mathcal C^*)\cap C^1(-\mathcal C^*)\cap C(\mathbb R)$ has polynomial growth and bounded second derivative on some reduced neighbourhood of $\partial\mathcal C^*$. Suppose further that $(\mathcal I^*,\delta^*)$ is an admissible strategy.
$$\mbox{Then,}\quad V\mbox{ is a symmetric equilibrium payoff attained at a symmetric NE }(\mathcal I^*,\delta^*).$$
\end{corollary}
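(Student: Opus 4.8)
The plan is to reduce the statement to the full Verification Theorem \ref{verification} by ``doubling'' $V$ into a symmetric pair and checking that every hypothesis of that theorem translates, one by one, into a hypothesis of the corollary. Concretely, I would set $V_1\defeq V$ and $V_2\defeq V(-\,\cdot\,)$, so that $(V_1,V_2)$ is symmetric in the sense of Definition \ref{sym_functions}. By Convention \ref{convention}, the hypothesis that $V$ has the UIP \emph{is} the hypothesis that $(V_1,V_2)$ has the UIP, and then Lemma \ref{sym_lemma}\,\textit{(i)}--\textit{(ii)} gives that $\mathcal M_1V_1,\mathcal M_2V_2$ and $\mathcal H_1V_1,\mathcal H_2V_2$ are again symmetric, with $\mathcal M_2V_2=\mathcal M_1V_1(-\,\cdot\,)$, $\mathcal H_2V_2=\mathcal H_1V_1(-\,\cdot\,)$ and $\delta_1^*(x)=-\delta_2^*(-x)$. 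In particular $\mathcal I_1^*=\mathcal I^*$, $\mathcal I_2^*=-\mathcal I^*$, $\mathcal C_1^*=\mathcal C^*$ and $\mathcal C_2^*=-\mathcal C^*$, and likewise for the boundaries.

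Next I would verify system (\ref{QVIs}) for $(V_1,V_2)$. Its first line, $\mathcal M_jV_j-V_j\le 0$ on $\mathbb R$ for $j=1,2$, is equivalent by Lemma \ref{sym_lemma}\,\textit{(i)} to the single inequality $\mathcal M V\le V$ on $\mathbb R$ (the partition condition behind the notation $\mathcal I^*,\mathcal C^*$, as in the first line of (\ref{QVIs})). For the remaining two lines, the index choice $(i,j)=(1,2)$ reads $\mathcal H_1V_1-V_1=0$ on $\mathcal I_2^*=-\mathcal I^*$ and $\max\{\mathcal A V_1-\rho_1V_1+f_1,\ \mathcal M_1V_1-V_1\}=0$ on $\mathcal C_2^*=-\mathcal C^*$; since $V_1=V$, $\rho_1=\rho$, $f_1=f$, $\mathcal M_1V_1=\mathcal M V$ and $\mathcal H_1V_1=\mathcal H V$, these are verbatim the two lines of (\ref{sym_QVIs}). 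The choice $(i,j)=(2,1)$ gives the reflected equations on $\mathcal I_1^*=\mathcal I^*$ and $\mathcal C_1^*=\mathcal C^*$; here one uses that the generator intertwines with the reflection, $\mathcal A\big(V(-\,\cdot\,)\big)(x)=(\mathcal A V)(-x)$ --- a one-line computation from $\mu$ odd and $\sigma$ even, condition \ref{sym_dynamics} --- together with $\rho_1=\rho_2$, $f_1=f_2(-\,\cdot\,)$ (condition \ref{sym_rest}) and Lemma \ref{sym_lemma}, to see that evaluating the $(2,1)$ equations at $x$ coincides with evaluating the already-checked $(1,2)$ equations at $-x$. The regularity clause $V_i\in C^2(\mathcal C_j^*\setminus\partial\mathcal C_i^*)\cap C^1(\mathcal C_j^*)\cap C(\mathbb R)$ with polynomial growth and bounded second derivative near $\partial\mathcal C_i^*$ transfers immediately: for $i=1$ it is the hypothesis on $V$ word for word, and for $i=2$ it is that same hypothesis precomposed with $x\mapsto -x$, which preserves each of these properties. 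Finally, admissibility of $\big((\mathcal I_i^*,\delta_i^*)\big)_{i=1,2}$ is, unwinding Convention \ref{convention}, precisely the assumed admissibility of $(\mathcal I^*,\delta^*)$; and thanks to condition \ref{sym_constraints} the two intervention regions lie on opposite sides of the symmetry line, so the priority rule is irrelevant.

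With all hypotheses of Theorem \ref{verification} in place, I would invoke it to conclude that $(V_1,V_2)$ are equilibrium payoffs attained at the NE $\big((\mathcal I_i^*,\delta_i^*)\big)_{i=1,2}$, and then only record that this output is symmetric: $V_2=V_1(-\,\cdot\,)$ by construction, while $\mathcal I_1^*=-\mathcal I_2^*$ and $\delta_1^*(x)=-\delta_2^*(-x)$ were noted above, which is exactly Definition \ref{sym_strategies}. Reading these facts through Convention \ref{convention} gives the claim: $V$ is a symmetric equilibrium payoff attained at the symmetric NE $(\mathcal I^*,\delta^*)$.

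I do not anticipate a real obstacle: the argument is a dictionary translation, with all the analytic content already carried by Theorem \ref{verification} and Lemma \ref{sym_lemma}. The one point that needs genuine care is the reflection bookkeeping --- matching $\mathcal I_j^*,\mathcal C_j^*$ and their boundaries to $\pm\mathcal I^*,\pm\mathcal C^*$, and checking that $\mathcal A$, $\mathcal M$, $\mathcal H$, the running payoff and the discount rate all commute correctly with $x\mapsto-x$ under conditions \ref{sym_dynamics}--\ref{sym_rest} --- so that the $(i,j)=(2,1)$ equations of (\ref{QVIs}) are truly redundant once the $(1,2)$ ones and (\ref{sym_QVIs}) are known.
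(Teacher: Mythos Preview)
Your approach is essentially the paper's own: define $V_1=V$, $V_2=V(-\cdot)$, use Lemma \ref{sym_lemma} and the symmetry conditions \ref{sym_dynamics}--\ref{sym_rest} to transport system (\ref{sym_QVIs}) and the regularity/admissibility hypotheses into the full system (\ref{QVIs}), then invoke Theorem \ref{verification}. The reflection bookkeeping you outline is exactly what is needed.

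One small point you gloss over: you treat $\mathcal M V\le V$ on $\mathbb R$ as ``the partition condition behind the notation $\mathcal I^*,\mathcal C^*$'', but this inequality is \emph{not} among the hypotheses of the corollary --- system (\ref{sym_QVIs}) deliberately omits it. The paper derives it: by \ref{sym_constraints} and strict positivity of costs one has $\mathcal M V-V<0$ on $[0,+\infty)$, hence $-\mathcal C^*\supseteq(-\infty,0]$; then the second line of (\ref{sym_QVIs}) gives $\mathcal M V-V\le\max\{\mathcal A V-\rho V+f,\mathcal M V-V\}=0$ on $-\mathcal C^*$, so $\mathcal M V\le V$ on all of $\mathbb R$. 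You should make this step explicit rather than assume it.
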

Note that system (\ref{sym_QVIs}) also omits the equation $\mathcal M V - V \leq 0$ as per being redundant. Indeed, by Definition \ref{sym_game} and Remark \ref{r:zero_impulse}, at a NE the player does not intervene above 0, nor the opponent below it. Thus, $\mathcal M V - V \leq \max\big\{\mathcal A V -\rho V + f, \mathcal M V- V \}=0$ on $-\mathcal C^*\supset (-\infty,0]$ and $\mathcal M V- V < 0$ on $[0,+\infty)$. 

System (\ref{sym_QVIs}) simplifies a numerical problem which is very challenging even in cases of linear structure \cite{ABMZZ}. In light of the previous, we will focus our attention on symmetric NEs only and numerically solving the reduced system of QVIs (\ref{sym_QVIs}).

\section{Numerical discrete-space problem}
\label{s:discrete_problem}
In this section we consider a discrete version of the symmetric system of QVIs (\ref{sym_QVIs}) over a fixed grid, and propose and study an iterative method to solve it. As it is often done in numerical analysis for stochastic control, for the sake of generality we proceed first in an abstract fashion without making reference to any particular discretization scheme. Instead, we give some general assumptions any such scheme should satisfy for the results presented to hold. Explicit discretization schemes within our framework are presented in Section \ref{s:discretization} and used in Section \ref{s:numerics}.  

\subsection{Discrete system of quasi-variational inequalities}
\label{s:dQVIs}

From now on we work on a discrete symmetric grid 
\begin{equation*}
\grid:\ x_{-N}=-x_N<\dots<x_{-1}=-x_1<x_0=0<x_1<\dots<x_N.
\end{equation*} 
$\mathbb R^\grid$ denotes the set of functions $v:\grid\to\mathbb R$ and $S:\mathbb R^\grid\to \mathbb R^\grid$ denotes the symmetry operator, $Sv(x)=v(-x)$. In general, by an ``operator" we simply mean some $F:\mathbb R^\grid\to\mathbb R^\grid$, not necessarily linear nor affine unless explicitly stated. We shall identify grid points with indices, functions in $\mathbb R^\grid$ with vectors and linear operators with matrices; e.g., $S=(S_{ij})$ with $S_{ij}=1$ if $x_i=-x_j$ and 0 otherwise. The (partial) order considered in $\mathbb R^\grid$ and $\mathbb R^{\grid\times\grid}$ is the usual pointwise order for functions (elementwise for vectors and matrices), and the same is true for the supremum, maximum and arg-maximum induced by it.

We want to solve the following discrete nonlinear system of QVIs for $v\in\mathbb R^\grid$:
\begin{equation}
\label{dQVIs}
\begin{cases}
	\begin{aligned}
		& H v- v=0 && \text{on} \,\,\, -\{ M v -  v = 0\}\backdefeq -I^*\\
		& \max\big\{ Lv + f, Mv- v \}=0 && \text{on} \,\,\, -\{ M v - v < 0\}\backdefeq -C^*,
	\end{aligned}
\end{cases}
\end{equation} 
where $f\in\mathbb R^\grid$ and $L:\mathbb R^\grid\to \mathbb R^\grid$ is a linear operator. The nonlinear operators $M,H:\mathbb R^\grid\to\mathbb R^\grid$ are as follows: let $\emptyset\neq Z(x)\subseteq\mathbb R$ be a finite set for each $x\in\grid$, with $Z(x)=\{0\}$ if $x\geq 0$. Set $Z\defeq\prod_{x\in\mathbb\grid}Z(x)$ and for each $\delta\in Z$ let $B(\delta):\mathbb R^\grid\to\mathbb R^\grid$ be a linear operator, $c(\delta)\in\mathbb (0,+\infty)^\grid$ and $g(\delta)\in\mathbb R^\grid$, the three of them being \textit{row-decoupled} in the sense of \cite{BMZ, AF} (i.e., row $x$ of $B(\delta),c(\delta),g(\delta)$ depends only on $\delta(x)\in Z(x)$). Then 
\begin{gather}
Mv \defeq \max_{\delta\in Z}\big\{B(\delta)v-c(\delta)\big\},\quad Hv=H(\delta^*)v\defeq SB(\delta^*)Sv+g(S\delta^*)\label{discrete_intervention_operators}\\
\mbox{and}\quad\delta^*=\delta^*(v)\defeq\max\Big(\argmax_{\delta\in Z}\big\{B(\delta)v-c(\delta)\big\}\Big).\label{delta_star}
\end{gather}

Some remarks are in order. Firstly, in the same fashion as the continuous-space case, the sets $I^*,C^*$ form a partition of the grid and represent the (discrete) intervention and continuation regions of the player, while $-I^*,-C^*$ are such regions for the opponent. 

Secondly, the general representation of $M$ follows \cite{CMS,AF}. For the standard choices of $B(\delta)$, our definition of $H$ is the only one for which a discrete version of Lemma \ref{sym_lemma} holds true (see Section \ref{s:discretization}). However, since $B$ and $g$ are row-decoupled, $SB(\delta^*)S$ and $g(S\delta^*)$ cannot be, as each row $x$ depends on $\delta^*(-x)$. For this reason and the lack of maximization over $-I^*$, there is no obvious way to reduce problem (\ref{dQVIs}) to a classical Bellman problem:
\begin{equation}
\label{Bellman_problem}
\sup_{\varphi}\big\{-A(\varphi)v+b(\varphi)\big\}=0,
\end{equation}
like in the impulse control case \cite{AF}, to apply Howard's policy iteration \cite[Ho-1]{BMZ}. Furthermore, unlike in the control case, even with unidirectional impulses and good properties for $L$ and $B(\delta)$, system (\ref{dQVIs}) may have no solution as in the analytical case \cite{ABCCV}. 

Thirdly, we have defined $\delta^*$ in (\ref{delta_star}) by choosing one particular maximizing impulse for each $x\in\grid$. The main motivation behind fixing one is to have a well defined discrete system of QVIs for every $v\in\mathbb R^\grid$. (This is not the case for the analytical problem (\ref{sym_QVIs}) where the gain operator $\mathcal H$ is not well defined unless $V$ has the UIP.) Being able to plug in any $v$ in (\ref{dQVIs}) and obtain a residual will be useful in practice, when assessing the convergence of the algorithm (see Section \ref{s:numerics}). Whether a numerical solution verifies, at least approximately, a discrete UIP (and the remaining technical conditions of the Verification Theorem) becomes something to be checked separately a posteriori. 

\begin{remark}
\label{r:maxargmax1}
Choosing the maximum arg-maximum in (\ref{delta_star}) is partly motivated by ensuring a discrete solution will inherit the property of Remark \ref{r:concave_costs}. (The proof remains the same, for the discretizations of Section \ref{s:discretization}.) We will also motivate it in terms of the proposed numerical algorithm in Remark \ref{r:maxargmax2}. Note that in \cite{ABMZZ} the minimum arg-maximum is used instead for both players. Nevertheless, the replication of property \textit{(ii)}, Lemma \ref{sym_lemma}, dictates that it is only possible to be consistent with \cite{ABMZZ} for one of the two players (in this case, the opponent).
\end{remark}

\subsection{Iterative algorithm for symmetric games}
\label{s:sym_algo}
This section introduces the iterative algorithm developed to solve system (\ref{dQVIs}), which builds on \cite[Alg.2]{ABMZZ} by harnessing the symmetry of the problem and dispenses with the need for a relaxation scheme altogether. It is presented with a pseudocode that highlights the mimicking of system (\ref{dQVIs}) and the intuition behind the algorithm; namely:
\begin{itemize}
\item The player starts with some suboptimal strategy $\varphi^0=(I^0,\delta^0)$ and payoff $v^0$, to which the opponent responds symmetrically, resulting in a gain for the player (first equation of (\ref{dQVIs}); lines 1, 2 and 4 of Algorithm \ref{sym_algo}).
\item The player improves her strategy by choosing the optimal response, i.e., by solving a single-player impulse control problem through a policy-iteration-type algorithm (second equation of ({\ref{dQVIs}}); line 5 of Algorithm \ref{sym_algo}).
\item This procedure is iterated until reaching a stopping criteria (lines 6-8 of Algorithm \ref{sym_algo}).
\end{itemize}
\begin{notation}
In the following: $\grid_{<0}$ and $\grid_{\leq 0}$ represent the sets of grid points which are negative and nonpositive respectively, and $\Phi$ the set of \textit{(discrete) strategies} 
\begin{equation}
\label{discrete_strategies}
\Phi\defeq\{\varphi=(I,\delta):\ I\subseteq\grid_{<0}\mbox{ and }\delta\in Z\}.
\end{equation}
Set complements are taken with respect to the whole grid, $\operatorname{Id}:\mathbb R^G\to\mathbb R^G$ is the identity operator; and given a linear operator $O:\mathbb R^\grid\to\mathbb R^\grid\simeq\mathbb R^{\grid\times\grid}$, $v\in\mathbb R^\grid$ and subsets $I,J\subseteq\grid$, $v_I\in\mathbb R^I$ denotes the restriction of $v$ to $I$ and $O_{IJ}\in\mathbb R^{I\times J}$ the submatrix/operator with rows in $I$ and columns in $J$. 
\end{notation}

\begin{algorithm}[H]
    \caption{Iterative algorithm for symmetric games} \label{sym_algo}
		Set $tol,scale>0$. 
		\begin{algorithmic}[1]
	    \STATE Choose initial guess: $v^0\in\mathbb R^\grid$
			\STATE Set $I^0=\{Lv^0+f\leq Mv^0-v^0\}\cap\grid_{<0}$ and $\delta^0=\delta^*(v^0)$
			\FOR{$k=0,1,\dots$}
				\STATE $
									v^{k+1/2}=\begin{cases} 
														H(\delta^k)v^k & \mbox{ on }-I^k\\
													  v^k            & \mbox{ on }(-I^k)^c
														\end{cases}
								$						
				\STATE $(v^{k+1},I^{k+1},\delta^{k+1})=\textsc{SolveImpulseControl}(v^{k+1/2},(-I^k)^c)$
				\IF{$\|(v^{k+1}-v^k)/\max\{|v^{k+1}|,scale\}\|_{\infty}<tol$} 
					\STATE break from the iteration
				\ENDIF	
			\ENDFOR	
    \end{algorithmic}
\end{algorithm}

The $scale$ parameter in line 5 of Algorithm \ref{sym_algo}, used throughout the literature by Forsyth, Labahn and coauthors \cite{AF,FL,HFL1,HFL2}, prevents the enforcement of unrealistic levels of accuracy for points $x$ where $v^{k+1}(x)\approx 0$. Additionally, note that having chosen the initial guess for the payoff $v^0$, the initial guess for the strategy is induced by $v^0$. (The alternative expression for the intervention region gives the same as $\big\{M v^0- v^0=0\big\}$ for a solution of (\ref{dQVIs}).)

Line 5 of Algorithm \ref{sym_algo} assumes we have a subroutine $\textsc{SolveImpulseControl}(w,D)$ that solves the constrained QVI problem:
\begin{equation}
\label{constrained_QVI}
\max\{Lv+f,Mv-v\}=0\mbox{ on }D,\quad\mbox{subject to }v=w\mbox{ on }D^c
\end{equation}
for fixed $\grid_{\leq 0}\subseteq D\subseteq\grid$ (approximate continuation region of the opponent) and $w\in\mathbb R^\grid$ (approximate payoff due to the opponent's intervention). Although we only need to solve for $\tilde v=v_D$, the value of $v_{D^c}=w_{D^c}$ impacts the solution both when restricting the equations and when applying the nonlocal operator $M$. Hence, the approximate payoff $v^{k+1/2}$ fed to the subroutine serves to pass on the gain that resulted from the opponent's intervention and as an initial guess if desired (more on this on Remark \ref{r:initial_guess}).

The remaining of this section consists in establishing an equivalence between problem (\ref{constrained_QVI}) and a classical (unconstrained) QVI problem of impulse control. This allows us to prove the existence and uniqueness of its solution. In particular, we will see that \textsc{SolveImpulseControl} can be defined, if wanted, by policy iteration (see the next section for an alternative method and remarks on other possible ones). Let us suppose from here onwards that the following assumptions hold true (see Appendix \ref{appendix:matrices} for the relevant Definitions \ref{matrix_definitions}):
\begin{enumerate}[label=(A\arabic*), start=0]
\item \label{A0} 
For each strategy $\varphi=(I,\delta)\in\Phi$ and $x\in I$, there exists a walk in graph$B(\delta)$ from row $x$ to some row $y\in I^c$. 
\item \label{A1} 
$-L$ is a strictly diagonally dominant (SDD) $\mbox{L}_0$-matrix and, for each $\delta\in Z$, $\operatorname{Id}-B(\delta)$ is a weakly diagonally dominant (WDD) $\mbox{L}_0$-matrix.
\end{enumerate}

\begin{remark}[\textit{Interpretation}]
\label{r:interpretation}
Assumptions \ref{A0},\ref{A1} are (H2),(H3) in \cite{AF}. For an impulse operator (say, ``$B(\delta)v(x)=v(x+\delta)$"), \ref{A0} asserts that the player always wants to shift states in her intervention region to her continuation region through finitely many impulses. (This does not take into account the opponent's response.) On the other hand, if problem (\ref{constrained_QVI}) was rewritten as a fixed point problem, \ref{A1} would essentially mean that the uncontrolled operator is contractive while the controlled ones are nonexpansive (see \cite{CMS} and \cite[Sect.4]{AF}). 
\end{remark}

\begin{theorem}
\label{solution_constrained_QVI_1}
Assume \emph{\ref{A0}},\emph{\ref{A1}}. Then, for every $\grid_{\leq 0}\subseteq D\subseteq\grid$ and $w\in\mathbb R^\grid$, there exists a unique $v^*\in\mathbb R^\grid$ that solves the constrained QVI problem (\ref{constrained_QVI}). Further, $v^*_D$ is the unique solution of 
\begin{equation}
\label{restricted_QVI}
\max\Big\{\tilde L \tilde v +\tilde f, \tilde M \tilde v - \tilde v\Big\}=0,
\end{equation}
where $\tilde L\defeq L_{DD},\ \tilde f\defeq f_D+L_{DD^c}w_{D^c},\ \tilde Z\defeq \prod_{x\in D}Z(x),\ \tilde c(\tilde\delta)\defeq c(\tilde\delta)-B(\tilde\delta)_{DD^c}w_{D^c}\mbox{ and }\tilde B(\tilde\delta)\defeq B(\tilde\delta)_{DD}\mbox{ for }\tilde\delta\in\tilde Z;\mbox{ and } \tilde M\tilde v\defeq\max_{\tilde\delta\in\tilde Z}\big\{\tilde B(\tilde\delta)\tilde v -\tilde c(\tilde\delta)\big\}\mbox{ for }\tilde v\in\mathbb R^D$.

Additionally, for any initial guess, the sequence $(\tilde v^k)\subseteq\mathbb R^D$ defined by policy iteration \cite[Thm.4.3]{AF} applied to problem (\ref{restricted_QVI}), converges exactly to $v^*_D$ in at most $|\tilde\Phi|$ iterations, with $\tilde\Phi\defeq\{\tilde\varphi=(I,\tilde\delta):\ I\subseteq\grid_{<0}\mbox{ and }\tilde\delta\in\tilde Z\}$  the set of restricted admissible strategies.\footnote{$|A|$ denotes the cardinal of set $A$.}
\end{theorem}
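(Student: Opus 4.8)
The plan is to reduce the constrained QVI problem (\ref{constrained_QVI}) on $D$ to an honest, unconstrained impulse-control QVI on the smaller grid $D$, and then invoke the existing impulse-control theory (\cite[Thm.4.3]{AF}) verbatim. First I would observe that on $D^c$ the problem (\ref{constrained_QVI}) prescribes $v=w$ directly, so any solution is completely determined there; the content is solving for $\tilde v=v_D$. I would then substitute $v_{D^c}=w_{D^c}$ into the two operators appearing in the equation restricted to $D$: in $Lv+f$, splitting $L$ into its $D$- and $D^c$-columns gives $L_{DD}\tilde v+L_{DD^c}w_{D^c}+f_D$, which is exactly $\tilde L\tilde v+\tilde f$; in $Mv-v=\max_\delta\{B(\delta)v-c(\delta)\}-v$, the same column split on each $B(\delta)$ gives $\max_\delta\{B(\delta)_{DD}\tilde v-(c(\delta)-B(\delta)_{DD^c}w_{D^c})\}-\tilde v$, i.e. $\tilde M\tilde v-\tilde v$ with the stated $\tilde c$. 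This establishes that $v^*$ solves (\ref{constrained_QVI}) if and only if $v^*_{D^c}=w_{D^c}$ and $v^*_D$ solves (\ref{restricted_QVI}), reducing everything to well-posedness of (\ref{restricted_QVI}).

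Next I would verify that (\ref{restricted_QVI}) is an instance of the impulse-control QVI for which \cite[Thm.4.3]{AF} applies, by checking that the hypotheses \ref{A0} and \ref{A1} are inherited by the restricted data $\tilde L,\tilde B(\tilde\delta)$. For \ref{A1}: $-\tilde L=-L_{DD}$ is a principal submatrix of the SDD $\mathrm{L}_0$-matrix $-L$; deleting rows and columns symmetrically preserves the $\mathrm{L}_0$ sign pattern, and strict diagonal dominance can only improve when off-diagonal (nonpositive) entries are removed, so $-\tilde L$ is again an SDD $\mathrm{L}_0$-matrix. Similarly $\operatorname{Id}-\tilde B(\tilde\delta)=(\operatorname{Id}-B(\delta))_{DD}$ (extending $\tilde\delta$ by $0$ on $D^c$, which is legitimate since $Z(x)=\{0\}$ for $x\ge 0$ and $D\supseteq\grid_{\le 0}$) is a principal submatrix of a WDD $\mathrm{L}_0$-matrix, hence WDD and $\mathrm{L}_0$. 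Row-decoupling of $\tilde B,\tilde c,\tilde g$ is immediate since it is a row-local property unaffected by discarding columns. For \ref{A0}: given $\tilde\varphi=(I,\tilde\delta)\in\tilde\Phi$ with $I\subseteq\grid_{<0}\subseteq D$, extend $\tilde\delta$ to $\delta\in Z$ by $0$ on $D^c$; \ref{A0} for the full grid gives a walk in $\mathrm{graph}\,B(\delta)$ from any $x\in I$ to some $y\in I^c$. I would then argue this walk can be taken to land in the (generally larger) target set $I^c$ relative to $D$: either the walk already reaches $D^c\subseteq I^c$, or it stays within $D$, where $B(\delta)$ restricted to $D$-rows and $D$-columns coincides with $\tilde B(\tilde\delta)$ because the impulses used are the $D$-local ones; truncating the walk at the first exit from $D$ (if any) yields a walk in $\mathrm{graph}\,\tilde B(\tilde\delta)$ ending in $I^c$, which is \ref{A0} for the restricted problem.

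Having checked the hypotheses, the existence and uniqueness of the solution $v^*_D$ of (\ref{restricted_QVI}), together with finite termination of policy iteration in at most $|\tilde\Phi|$ steps for any initial guess, follow directly from \cite[Thm.4.3]{AF} applied to the restricted data; pulling back via $v^*_{D^c}=w_{D^c}$ gives the unique solution $v^*$ of (\ref{constrained_QVI}) and proves the theorem.

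I expect the main obstacle to be the careful bookkeeping in verifying that \ref{A0} survives the restriction: one must make sure that the walk produced by the full-grid assumption, whose vertices may exit $D$, can be legitimately re-interpreted inside $\mathrm{graph}\,\tilde B(\tilde\delta)$ and still be guaranteed to terminate in $I^c$ (rather than merely in the full-grid $I^c$ read off a different graph). The subtlety is that enlarging the ``absorbing'' set from the full-grid $I^c$ to $D\cap I^c$ plus all of $D^c$ only helps — any walk reaching the old target also reaches the new one, and truncation at $\partial D$ keeps us inside the restricted graph — so the argument goes through, but it deserves to be spelled out rather than asserted. The \ref{A1} part, by contrast, is the routine and standard fact that the relevant matrix classes (SDD, WDD, $\mathrm{L}_0$) are closed under taking principal submatrices.
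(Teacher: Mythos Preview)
Your overall strategy---reduce to the restricted QVI on $D$ and then invoke \cite[Thm.4.3]{AF}---is exactly the paper's, and the algebraic reduction together with the verification that \ref{A1} passes to principal submatrices is fine. The gap is in your treatment of \ref{A0}. You claim that truncating the full-grid walk at the first exit from $D$ yields a walk in $\mathrm{graph}\,\tilde B(\tilde\delta)$ \emph{ending in $I^c$}, but this is not true: the truncation point $y_m$ is the last vertex before the walk leaves $D$, and nothing prevents $y_m\in I$. (Concretely: if $B(\delta)$ sends $x\in I$ to another $y_m\in I$ and then $y_m$ directly to some $y_{m+1}\in D^c$, the truncated walk stops at $y_m\in I$.) So \ref{A0} need \emph{not} be inherited by the restricted data, and the paper says so explicitly.

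The fix, which is the paper's actual argument, is to bypass \ref{A0} and verify directly that each policy matrix $A(\tilde\varphi)$ is a WCDD $\mathrm{L}_0$-matrix. The key observation you are missing is that when the full-grid walk exits $D$ via the edge $y_m\to y_{m+1}$, one has $B(\delta)_{y_m,y_{m+1}}\neq 0$; deleting column $y_{m+1}$ in passing to $\tilde B(\tilde\delta)$ therefore strictly decreases the off-diagonal mass in row $y_m$, so row $y_m$ of $\tilde{\operatorname{Id}}-\tilde B(\tilde\delta)$ (hence of $A(\tilde\varphi)$, since $y_m\in I$) becomes SDD. Thus the truncated walk $x\to\cdots\to y_m$, which \emph{is} a walk in $\mathrm{graph}\,A(\tilde\varphi)$, terminates at an SDD row even when $y_m\in I$. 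That is what gives WCDD, and with it monotonicity and the applicability of \cite[Thm.4.3]{AF}.
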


\begin{proof}
The equivalence between problems (\ref{constrained_QVI}) and (\ref{restricted_QVI}) is due to simple algebraic manipulation and $B(\delta),c(\delta)$ being row-decoupled for every $\delta\in Z$. $B(\tilde\delta),c(\tilde\delta)$ are defined in the obvious way for each $\tilde\delta\in\tilde Z$.

The rest of the proof is mostly as in \cite[Thm.4.3]{AF}. Let $\tilde{\operatorname{Id}}=\operatorname{Id}_{DD}$. Each intervention region $I$ can be identified with its indicator $\tilde\psi=\mathbbm 1_I\in\{0,1\}^D$ since $D\supseteq I$. In turn, each $\tilde\psi$ can be identified with the diagonal matrix $\tilde\Psi=\mbox{diag}(\tilde\psi)\in\mathbb R^{D\times D}$. Then, problem (\ref{restricted_QVI}) takes the form of the classical Bellman problem 
\begin{equation}
\label{equivalent_Bellman}
\max_{\tilde\varphi\in\tilde\Phi}\big\{-A(\tilde\varphi)v+b(\tilde\varphi)\big\}=0,
\end{equation}
if we take 
$$A(\tilde\varphi)=-(\tilde{\operatorname{Id}}-\tilde\Psi)\tilde L + \tilde\Psi(\tilde{\operatorname{Id}}-\tilde B(\tilde\delta))\quad\mbox{ and }\quad b(\tilde\varphi)=(\tilde{\operatorname{Id}}-\tilde\Psi)\tilde f - \tilde\Psi\tilde c.$$ 
Note that $\tilde\Phi$ can be identified with the Cartesian product 
$$\tilde\Phi=\Big(\prod_{x\in\grid_{<0}}\{0,1\}\times Z(x)\Big)\times\Big(\prod_{x\in D\backslash\grid_{<0}}\{0\}\times Z(x)\Big)$$
and $A(\tilde\varphi),b(\tilde\varphi)$ are row-decoupled for every $\tilde\varphi\in\tilde\Phi$. Since $\tilde\Phi$ is finite, all we need to show is that the matrices $A(\tilde\varphi)$ are monotone (Definitions \ref{matrix_definitions} and \cite[Thm.2.1]{BMZ}). Let us check the stronger property (Thm. \ref{characterization_theorem} and Prop. \ref{M-matrix_characterization}) of being weakly chained diagonally dominant (WCDD) $\mbox{L}_0$-matrices (see Definitions \ref{matrix_definitions}).

If \ref{A0} and \ref{A1} also held true for the restricted matrices and strategies, the conclusion would follow. While \ref{A1} is clearly inherited, \ref{A0} may fail to do so, but only in non-problematic cases. To see this, let $\tilde\varphi=(I,\tilde\delta)\in\tilde\Phi,\ x\in I\subseteq D$ and let $\delta\in Z$ be some extension of $\tilde\delta$. Note that row $x$ of $A(\tilde\varphi)$ is WDD. We want to show that there is a walk in graph$A(\tilde\varphi)$ from $x$ to an SDD row.

By \ref{A0} there must exist some walk $x=y_0\to y_1\to\dots\to y_n\in I^c$ in graph$B(\delta)$. If this is in fact a walk from $x$ to $I^c\cap D$ in graph$\tilde B(\tilde\delta)$, then it verifies the desired property (just as in \cite[Thm.4.3]{AF}). If not, then there must be a first $0\leq m< n$ such that the subwalk $x\to \dots\to y_m$ is in graph$\tilde B(\tilde\delta)$ but $y_{m+1}\notin D$. Since $y_m\to y_{m+1}$ is an edge in graph$B(\delta)$, we have $B(\delta)_{y_m,y_{m+1}}\neq 0$ and the WDD row (by \ref{A1}) $y_m$ of $\tilde{\operatorname{Id}}-\tilde B(\tilde\delta)$ is in fact SDD. Meaning that the subwalk $x\to \dots\to y_m$ verifies the desired property instead.
\end{proof}

\begin{remark}(\textit{Practical considerations}) 
\label{practical_considerations}
\begin{enumerate*}
\item While convergence is guaranteed to be exact, floating point arithmetic can bring about stagnation \cite{HFL2}. A stopping criteria like that of Algorithm \ref{sym_algo} should be used in those cases, with a tolerance $\ll tol$.   
\item \label{lambda} The solution of system (\ref{restricted_QVI}) does not change if one introduces a scaling factor $\lambda>0$: $\max\big\{\tilde L \tilde v +\tilde f, \lambda\big(\tilde M \tilde v - \tilde v\big)\big\}=0$ \cite[Lem.4.1]{AF}. This problem-specific parameter is typically added in the implementation to enhance performance \cite{HFL1,AF}. It can intuitively be thought as a units adjustment.
\end{enumerate*}
\end{remark}

\subsection{Iterative subroutine for impulse control}
\label{s:solve_one_player}
Due to Theorem \ref{solution_constrained_QVI_1}, a sensible choice for \textsc{SolveImpulseControl} is the classical policy iteration algorithm \cite[Thm.4.3]{AF} applied to (\ref{restricted_QVI}) (i.e., \cite[Ho-1]{BMZ} applied to (\ref{equivalent_Bellman})), adding an appropriately chosen scaling factor $\lambda$ to improve efficiency (Remark \ref{practical_considerations} \ref{lambda}). One possible drawback of such choice is the following: at each iteration, one needs to solve the system $-A(\tilde\varphi^k)v^{k+1}+b(\tilde\varphi^k)=0$ for some $\tilde\varphi^k\in\tilde\Phi$. While the matrix $\tilde L$ typically has a good sparsity pattern in applications (often tridiagonal), the presence of $\tilde B(\tilde\delta^k)$ prevents $A(\tilde\varphi^k)$ from inheriting the same structure in general, and makes the resolution of the previous system more costly. (See \cite{HFL1} where this issue is addressed for HJB problems with jump diffusions and regime switching, among others.)

Motivated by the previous observation, this section considers an alternative choice for \textsc{SolveImpulseControl}: an instance of a very general class of algorithms known as \textit{fixed-point policy iteration} \cite{HFL1,Cl}. As far as the author knows, this application to impulse control was never done in the past other than heuristically in \cite{ABMZZ}. Instead of solving $-A(\tilde\varphi^k)v^{k+1}+b(\tilde\varphi^k)=0$ at the $k$-th iteration, we will solve 
\begin{equation}
\label{VI}
\underbrace{\left((\tilde{\operatorname{Id}}-\tilde\Psi^k)\tilde L - \tilde\Psi^k\right)}_{-\tilde{\mathbb A}(\tilde\varphi^k)}v^{k+1} + \underbrace{\tilde\Psi^k \tilde B(\tilde\delta^k)}_{\tilde{\mathbb B}(\tilde\varphi^k)}v^k + \underbrace{b(\tilde\varphi^k)}_{\tilde{\mathbb C}(\tilde\varphi^k)}=0,
\end{equation}
(scaled by $\lambda$) where the previous iterate value $v^k$ is given and $\tilde\Psi^k$ is the diagonal matrix with $\psi^k$ as diagonal. In other words, we split the original policy matrix $A(\tilde\varphi)=\tilde{\mathbb A}(\tilde\varphi)-\tilde{\mathbb B}(\tilde\varphi)$ and we apply a one-step fixed-point approximation,
\begin{equation}
\label{FPPI}
\tilde{\mathbb A}\big(\tilde\varphi^k\big)\tilde v^{k+1}=\tilde{\mathbb B}\big(\tilde\varphi^k\big)\tilde v^k +\tilde{\mathbb C}\big(\tilde\varphi^k\big),
\end{equation}
at each iteration of Howard's algorithm. The resulting method can be expressed as follows ($tol$ and $scale$ are as in Algorithm \ref{sym_algo}):

\makeatletter\renewcommand{\ALG@name}{Subroutine}
\begin{algorithm}[H]
    \caption{$(v,I,\delta)=\textsc{SolveImpulseControl}(w,D)$} \label{solve_one_player}
		\textbf{Inputs} $w\in\mathbb R^\grid$ and solvency region $\grid_{\leq 0}\subseteq D\subseteq\grid$
		
		\textbf{Outputs} $v\in\mathbb R^\grid,\ I\subseteq\grid_{<0},\ \delta\in Z$
		\newline
		
		Set scaling factor $\lambda>0$ and $0<\widetilde{tol}\ll tol$. 
		
		\qquad // Restrict constrained problem
		\begin{algorithmic}[1]
			\STATE $\tilde L\defeq L_{DD},\ \tilde f\defeq f_D+L_{DD^c}w_{D^c}$
			\STATE $\tilde Z\defeq \prod_{x\in D}Z(x),\quad \tilde c(\tilde\delta)\defeq c(\tilde\delta)-B(\tilde\delta)_{DD^c}w_{D^c},\ \tilde B(\tilde\delta)\defeq B(\tilde\delta)_{DD}\mbox{ for }\tilde\delta\in\tilde Z$
			\STATE $\tilde M\tilde v\defeq\max_{\tilde\delta\in\tilde Z}\big\{\tilde B(\tilde\delta)\tilde v -\tilde c(\tilde\delta)\big\}\mbox{ for }\tilde v\in\mathbb R^D,\ \tilde{\operatorname{Id}}\defeq \operatorname{Id}_{DD}$
			\newline
					
// Solve restricted problem
			\STATE Choose initial guess: $\tilde v^0\in\mathbb R^D$, $I^0\subseteq\grid_{<0}$
			\FOR{$k=0,1,2,\dots$}
				\STATE $\tilde L^k_{ij}=\begin{cases} 
										\tilde L_{ij} & \mbox{ if }x_i\in D\backslash I^k\\
										-\tilde{\operatorname{Id}}_{ij} & \mbox{ if }x_i\in I^k	
										\end{cases}
										\qquad
								\tilde f^k =\begin{cases} 
										\tilde f & \mbox{ on }\in D\backslash I^k\\
										\tilde M\tilde v^k & \mbox{ on }I^k
										\end{cases}
							 $	
				\STATE $\tilde v^{k+1} \mbox{ solution of }\tilde L^k \tilde v + \tilde f^k=0$
				\STATE $I^{k+1}=\big\{\tilde L\tilde v^{k+1}+\tilde f\leq\lambda\big(\tilde M\tilde v^{k+1}-\tilde v^{k+1}\big)\big\}$
				\IF{$\|(\tilde v^{k+1}-\tilde v^k)/\max\{\tilde v^{k+1},scale\}\|_{\infty}<\widetilde{tol}$} 
				\STATE $v=\begin{cases}
									\tilde v^{k+1} & \mbox{ on }D\\
									w_{D^c} & \mbox{ on }D^c
									\end{cases},
				\ I=I^{k+1},\ \delta=\delta^*(v)$ and break from the iteration 
				\ENDIF	
			\ENDFOR
    \end{algorithmic}
\end{algorithm}
\makeatother

Lines 1-3 of Subroutine \ref{solve_one_player} deal with restricting the constrained problem, while the rest give a routine that can be applied to any QVI of the form (\ref{restricted_QVI}). Starting from some suboptimal $\tilde v^0$ and $I^0$, one computes a new payoff $\tilde v^1$ by solving the coupled equations $\tilde M\tilde v^0-\tilde v^1=0$ on $I^0$ and $\tilde L\tilde v^1+\tilde f=0$ outside $I^0$. A new intervention region $I^1=\big\{\tilde L\tilde v^1+\tilde f\leq\lambda\big(\tilde M\tilde v^1-\tilde v^1\big)\big\}$ is defined and the procedure is iterated.

Algorithmically, the difference with classical policy iteration is that $\tilde v^{k+1}$ is computed in Line 7 with a fixed obstacle $\tilde Mv^k$, changing a quasivariational inequality for a variational one. The resulting method is intuitive and simple to implement, and the linear system (\ref{VI}) (Line 7) inherits the sparsity pattern of $\tilde L$. For example, for an SDD tridiagonal $\tilde L$, the system can be solved (exactly in exact arithmetic or stably in floating point one) in $O(n)$ operations, with $n=|D|$ \cite[Sect.9.5]{H}. The matrix-vector multiply $\tilde B(\tilde\delta^k)\tilde v^k$ can take at most $O(n^2)$ operations, but will reduce to $O(n)$ for standard discretizations of impulse operators.

It is also worth mentioning that Subroutine \ref{solve_one_player} differs from the so-called iterated optimal stopping \cite{COS,OS} in that the latter solves $\max\big\{\tilde L \tilde v^{k+1} + \tilde f, \tilde M \tilde v^k - \tilde v^{k+1}\big\}=0$ exactly at the $k$-th iteration (by running a full subroutine of Howard's algorithm with fixed obstacle), while the former only performs one approximation step. 

To establish the convergence of Subroutine \ref{solve_one_player} in the present framework, we add the following assumption:
\begin{enumerate}[label=(A\arabic*), start=2]
\item \label{A2} $B(\delta)$ has nonnegative diagonal elements for all $\delta\in Z$.
\end{enumerate}

\begin{remark}
\ref{A2} and the requirement of \ref{A1} that $\operatorname{Id}-B(\delta)$ be a WDD $\mbox{L}_0$-matrix are equivalent to $B(\delta)$ being substochastic (see Appendix \ref{appendix:matrices}). This is standard for impulse operators (see Section \ref{s:discretization}) and other applications of fixed-point policy iteration \cite[Sect.4-5]{HFL1}.
\end{remark}

\begin{theorem}
\label{solution_constrained_QVI_2}
Assume \emph{\ref{A0}--\ref{A2}} and set $I^0=\emptyset$. Then, for every $\grid_{\leq 0}\subseteq D\subseteq\grid$ and $w\in\mathbb R^\grid$, the sequence $(\tilde v^k)$ defined by \textsc{SolveImpulseControl}$(w,D)$ is monotone increasing for $k\geq 1$ and converges to the unique solution of \emph{(\ref{restricted_QVI})}.
\end{theorem}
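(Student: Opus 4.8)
The plan is to realize Subroutine \ref{solve_one_player} (with $I^0=\emptyset$) as a fixed-point policy iteration applied to the restricted Bellman problem \eqref{equivalent_Bellman}, and then invoke the general convergence theory for such schemes under a monotonicity-producing choice of initial policy, much as in \cite{HFL1,Cl}. First I would fix $D$ and $w$, pass to the restricted data $\tilde L,\tilde f,\tilde Z,\tilde c,\tilde B,\tilde M$ of Theorem \ref{solution_constrained_QVI_1}, and recall from that theorem that \eqref{restricted_QVI} has a unique solution $v^\ast_D$, which is therefore the only candidate limit. I would then write one iteration of the subroutine in the matrix form \eqref{VI}--\eqref{FPPI}: given $(\tilde v^k,I^k)$, the policy $\tilde\varphi^k=(I^k,\tilde\delta^k)$ is the greedy policy for $\tilde v^k$ (the intervention set chosen in Line 8 and $\tilde\delta^k=\delta^\ast(\tilde v^k)$), and $\tilde v^{k+1}$ solves $\tilde{\mathbb A}(\tilde\varphi^k)\tilde v^{k+1}=\tilde{\mathbb B}(\tilde\varphi^k)\tilde v^k+\tilde{\mathbb C}(\tilde\varphi^k)$, i.e. Line 7 with the fixed obstacle $\tilde M\tilde v^k$. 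Under \ref{A1}, $\tilde{\mathbb A}(\tilde\varphi^k)=(\tilde{\operatorname{Id}}-\tilde\Psi^k)(-\tilde L)+\tilde\Psi^k$ is an SDD $\mathrm L_0$-matrix (rows outside $I^k$ are SDD because $-\tilde L$ is; rows in $I^k$ equal $\tilde{\operatorname{Id}}$), hence a nonsingular M-matrix with $\tilde{\mathbb A}(\tilde\varphi^k)^{-1}\ge 0$; so each step is well defined and the linear solve in Line 7 makes sense.

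The core of the argument is the monotonicity claim: $\tilde v^k\le\tilde v^{k+1}$ for $k\ge1$. The standard mechanism (see \cite[Sect.4]{HFL1}) is: (a) the \emph{base step} $\tilde v^1\le\tilde v^2$, obtained from the fact that with $I^0=\emptyset$ we have $\tilde\Psi^0=0$, so $\tilde v^1$ solves $(-\tilde L)\tilde v^1=\tilde f$, and by the greedy/argmax characterization $-\tilde{\mathbb A}(\tilde\varphi^1)\tilde v^1+\tilde{\mathbb B}(\tilde\varphi^1)\tilde v^1+\tilde{\mathbb C}(\tilde\varphi^1)\ge 0$ (this is exactly $\max\{\tilde L\tilde v^1+\tilde f,\lambda(\tilde M\tilde v^1-\tilde v^1)\}\ge 0$, which holds because $\tilde L\tilde v^1+\tilde f=0$ everywhere); hence $\tilde{\mathbb A}(\tilde\varphi^1)(\tilde v^2-\tilde v^1)\ge 0$ and nonnegativity of $\tilde{\mathbb A}(\tilde\varphi^1)^{-1}$ gives $\tilde v^2\ge\tilde v^1$; and (b) the \emph{inductive step}, where from $\tilde v^{k-1}\le\tilde v^k$, the fact that $\tilde\varphi^k$ is greedy for $\tilde v^k$ (so $-\tilde{\mathbb A}(\tilde\varphi^k)\tilde v^k+\tilde{\mathbb B}(\tilde\varphi^k)\tilde v^k+\tilde{\mathbb C}(\tilde\varphi^k)\ge 0$, which is again Line 8's selection rewritten), together with $\tilde{\mathbb B}(\tilde\varphi^{k-1})=\tilde\Psi^{k-1}\tilde B(\tilde\delta^{k-1})\ge 0$ (here \ref{A2} plus substochasticity of $B(\delta)$ guarantee $\tilde B\ge 0$) applied to $\tilde v^k-\tilde v^{k-1}\ge 0$, yields $\tilde{\mathbb A}(\tilde\varphi^k)(\tilde v^{k+1}-\tilde v^k)\ge 0$, whence $\tilde v^{k+1}\ge\tilde v^k$ after multiplying by $\tilde{\mathbb A}(\tilde\varphi^k)^{-1}\ge 0$. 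I would also establish a uniform upper bound $\tilde v^k\le v^\ast_D$ for $k\ge1$ by the same comparison trick (using that $v^\ast_D$ is a supersolution of every single policy equation), which gives an increasing sequence bounded above, hence convergent to some $\tilde v^\infty\le v^\ast_D$.

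Finally I would identify $\tilde v^\infty$ with $v^\ast_D$. Since $\tilde\Phi$ is finite, along a subsequence the policy $\tilde\varphi^k$ is constant, say $=\bar\varphi$, and is greedy for the (common) limit, so passing to the limit in \eqref{FPPI} gives $\tilde{\mathbb A}(\bar\varphi)\tilde v^\infty=\tilde{\mathbb B}(\bar\varphi)\tilde v^\infty+\tilde{\mathbb C}(\bar\varphi)$, i.e. $A(\bar\varphi)\tilde v^\infty=b(\bar\varphi)$, together with optimality of $\bar\varphi$ at $\tilde v^\infty$; by the uniqueness part of Theorem \ref{solution_constrained_QVI_1} this forces $\tilde v^\infty=v^\ast_D$. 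The main obstacle I anticipate is book-keeping the precise relationship between the argmax/greedy rule coded in Lines 6--8 (with the scaling $\lambda$ and the maximal-argmax convention of \eqref{delta_star}) and the inequalities $-\tilde{\mathbb A}(\tilde\varphi^k)\tilde v^k+\tilde{\mathbb B}(\tilde\varphi^k)\tilde v^k+\tilde{\mathbb C}(\tilde\varphi^k)\ge 0$ needed at every step — i.e. verifying that ``greedy for $\tilde v^k$'' really does produce the sign needed to run the comparison argument — and making sure the $I^0=\emptyset$ initialization (rather than an arbitrary $I^0$) is what launches the monotone ladder; the M-matrix/positivity facts themselves are immediate from \ref{A1}--\ref{A2} and Appendix \ref{appendix:matrices}.
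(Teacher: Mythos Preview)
Your proposal is correct and follows the same fixed-point policy-iteration route as the paper: cast Subroutine~\ref{solve_one_player} in the form \eqref{FPPI}, use \ref{A0}--\ref{A2} to get the M-matrix/nonnegativity structure, prove monotonicity for $k\ge1$ from $I^0=\emptyset$, bound the iterates, and identify the limit via a constant-policy subsequence. One point deserves tightening: in your inductive step you write that greediness of $\tilde\varphi^k$ for $\tilde v^k$ alone gives $-\tilde{\mathbb A}(\tilde\varphi^k)\tilde v^k+\tilde{\mathbb B}(\tilde\varphi^k)\tilde v^k+\tilde{\mathbb C}(\tilde\varphi^k)\ge 0$, but greediness only yields that this residual dominates the one at $\tilde\varphi^{k-1}$. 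Combining that comparison with the previous iteration's equation is exactly what produces the paper's key inequality
\[
\tilde{\mathbb A}(\tilde\varphi^k)(\tilde v^{k+1}-\tilde v^k)\ \ge\ \tilde{\mathbb B}(\tilde\varphi^{k-1})(\tilde v^k-\tilde v^{k-1}),
\]
after which $\tilde{\mathbb B}(\tilde\varphi^0)=0$ (from $I^0=\emptyset$) and $\tilde{\mathbb B}\ge 0$ launch the ladder; the ingredients you list are precisely the ones needed, just assembled in this order. Your upper bound $\tilde v^k\le v^*_D$ (via $v^*_D$ being a supersolution for every policy) is a clean alternative to the paper's cruder bound $\tilde v^{k+1}\le\max_{\tilde\varphi\in\tilde\Phi}(\tilde{\mathbb A}(\tilde\varphi)-\tilde{\mathbb B}(\tilde\varphi))^{-1}\tilde{\mathbb C}(\tilde\varphi)$, and the limit identification is the same in both.
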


\begin{proof}
We can assume without loss of generality that $\lambda=1$. Subroutine \ref{solve_one_player} takes the form of a fixed-point policy iteration algorithm as per (\ref{FPPI}). Assumptions \ref{A0},\ref{A1} ensure (\ref{restricted_QVI}) has a unique solution (Theorem \ref{solve_one_player}) and that this scheme satisfies \cite[Cond.3.1 (i),(ii)]{HFL1}. That is, $\tilde{\mathbb A}(\tilde\varphi)$ and $\tilde{\mathbb A}(\tilde\varphi)-\tilde{\mathbb B}(\tilde\varphi)$ are nonsingular $M$-matrices (see proof of Theorem \ref{solve_one_player} and Appendix \ref{appendix:matrices}) and all coefficients are bounded since $\tilde\Phi$ is finite. In \cite[Thm.3.4]{HFL1} convergence is proved under one additional assumption of $\|\cdot\|_{\infty}$-contractiveness \cite[Cond.3.1 (iii)]{HFL1}, which is not verified in our case. However, the same computations show that the scheme satisfies
\begin{equation}
\label{aux}
\tilde{\mathbb A}(\tilde\varphi^k)(\tilde v^{k+1}-\tilde v^k)\geq \tilde{\mathbb B}(\tilde\varphi^{k-1})(\tilde v^k-\tilde v^{k-1})\mbox{ for all }k\geq 1.
\end{equation}
Since $I^0=\emptyset$, and due to \ref{A1} and \ref{A2}, $\tilde{\mathbb B}(\tilde\varphi^0)=0$ and $\tilde{\mathbb B}(\tilde\varphi^k)\geq 0$ for all $k$. Thus, $(\tilde v^k)_{k\geq 1}$ is increasing by monotonicity of $\tilde{\mathbb A}(\tilde\varphi^k)$. Furthermore, it must be bounded, since for all $k\geq 1$:
$$
\tilde{\mathbb A}\big(\tilde\varphi^k\big)\tilde v^{k+1}=\tilde{\mathbb B}\big(\tilde\varphi^k\big)\tilde v^k +\tilde{\mathbb C}\big(\tilde\varphi^k\big)\leq\tilde{\mathbb B}\big(\tilde\varphi^{k}\big)\tilde v^{k+1} +\tilde{\mathbb C}\big(\tilde\varphi^k\big),
$$
which gives $\tilde v^{k+1}\leq (\tilde{\mathbb A}(\tilde\varphi^k)-\tilde{\mathbb B}(\tilde\varphi^k))^{-1}\tilde{\mathbb C}\big(\tilde\varphi^k\big)\leq \max_{\tilde\varphi\in\tilde\Phi}(\tilde{\mathbb A}(\tilde\varphi)-\tilde{\mathbb B}(\tilde\varphi))^{-1}\tilde{\mathbb C}\big(\tilde\varphi\big)$. Hence, $(\tilde v^k)$ converges. That the limit solves	(\ref{restricted_QVI}) is proved as in \cite[Lem.3.3]{HFL1}.
\end{proof}

\begin{remark}
\label{r:initial_guess}
In light of Theorem \ref{solution_constrained_QVI_2}, moving forward we will set $I^0=\emptyset$ in Subroutine \ref{solve_one_player}. (Note that the value of $\tilde v^0$ is irrelevant in this case, since $\tilde{\mathbb B}\big(\tilde\varphi^0\big)=0$.) It is natural however to choose $\tilde v^0=w_D$ and $I^0=\big\{\tilde L\tilde v^0+\tilde f\leq\lambda\big(\tilde M\tilde v^0-\tilde v^0\big)\big\}$. The experiments performed with the latter choice displayed faster but non-monotone convergence, but this is not proved here. Additionally, exact convergence was often observed.
\end{remark}

\begin{remark}
\label{r:subroutine}
For the experiments in Section \ref{s:numerics}, \textsc{SolveImpulseControl} was chosen as Subroutine \ref{solve_one_player} instead of classical policy iteration, since the former displayed overall lower runtimes when compared to the latter. However, it should be noted that the games considered have relatively large costs, with many parameter values taken from previous works \cite{ABCCV,BCG,ABMZZ}. For smaller costs often occurring in practice, and especially for large-scale problems, the convergence rate of Subroutine \ref{solve_one_player} can become very slow, as it happens with iterated optimal stopping \cite[Rmk.3.1]{RZ}. This is not so for policy iteration, which converges superlinearly (see \cite[Thm.3.4]{BMZ} and \cite[Sect.5]{RZ}), making it a better suited choice in this case (in terms of runtime as well). Furthermore, it has been demonstrated in \cite{RZ1}[Sect.7] that the number of steps required for \emph{penalized} policy iteration to converge remains bounded as the grid is refined. While penalized methods are not considered in the present work, such mesh-independence properties are desirable and could prove paramount in further studies of Algorithm \ref{sym_algo} (see end of Section \ref{s:convergence} and Section \ref{s:numerics}).
\end{remark}

\subsection{Overall routine as a fixed-point policy-iteration-type method}
\label{s:FPPI}

The system of QVIs (\ref{dQVIs}) cannot be reduced in any apparent way to a Bellman formulation (\ref{Bellman_problem}) (see comments preceding equation). Notwithstanding, we shall see that Algorithm \ref{sym_algo} does take a very similar form to a fixed-point policy iteration algorithm as in (\ref{FPPI}) for some appropriate $\mathbb A,\mathbb B,\mathbb C$. Further, assumptions resembling those of the classical case \cite{HFL1} will be either satisfied or imposed to study its convergence. 
The matrix and graph-theoretic definitions and properties used throughout this section can be found in Appendix \ref{appendix:matrices}.

\begin{notation}
We identify each intervention region $I\subseteq\grid_{<0}$ with its indicator function $\psi=\mathbbm 1_I\in\{0,1\}^\grid$ and each $\psi$ with the diagonal matrix $\Psi=\mbox{diag}(\psi)\in\mathbb R^{\grid\times \grid}$.
The sequences $(v^k)$ and $(\varphi^k)$, with $\varphi^k=(\psi^k,\delta^k)$, are the ones generated by Algorithm \ref{sym_algo}. \textsc{SolveImpulseControl} is defined as either Subroutine \ref{solve_one_player} or Howard's algorithm (Theorem \ref{solution_constrained_QVI_1}), setting the outputs $I,\delta$ as done for the former subroutine. We consider $v^*\in\mathbb R^\grid$ fixed and $\varphi^*=(\psi^*,\delta^*(v^*))$ the induced strategy with $\psi^*\defeq\{Lv^*+f\leq Mv^*-v^*\}\cap\grid_{<0}$.
\end{notation}

\begin{proposition}
\label{FPPI-like_result}
Assume \emph{\ref{A0}--\ref{A2}}. Then, 
\begin{equation}
\label{FPPI-like}
\mathbb A\big(\varphi^k,\varphi^{k+1}\big)v^{k+1}=\mathbb B\big(\varphi^k\big)v^k +\mathbb C\big(\varphi^k,\varphi^{k+1}\big),\mbox{ where:}
\end{equation}
\begin{enumerate}[label=(\roman*)]
\item $\psi^k=\mathbbm 1_{\{Lv^k+f\leq Mv^k-v^k\}\cap\grid_{<0}}\mbox{ and } \delta^k\in\argmax_{\delta\in Z}\big\{B(\delta)v^k-c(\delta)\big\}$.
\item $\mathbb A\big(\varphi,\overline\varphi\big)\defeq \operatorname{Id} -\big(\operatorname{Id} - \overline\Psi-S\Psi S\big)(\operatorname{Id}+L) - \overline\Psi B(\overline\delta)$ is a WCDD $\mbox{L}_0$-matrix, and thus a nonsingular M-matrix.  
\item $\mathbb B\big(\varphi\big)\defeq S\Psi B(\delta)S=diag(S\psi) S B(\delta)S$ is substochastic.
\item $\mathbb C\big(\varphi,\overline\varphi\big)\defeq \big(\operatorname{Id}-\overline\Psi-S\Psi S\big)f - \overline\Psi c(\overline\delta) + S\Psi S g(S\delta)$.
\end{enumerate}
\end{proposition}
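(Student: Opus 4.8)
The plan is to unwind the two update steps of Algorithm \ref{sym_algo} (lines 4 and 5, the latter being Subroutine \ref{solve_one_player} or Howard's algorithm) into a single linear relation of the form (\ref{FPPI-like}), and then verify the three structural claims (ii)--(iv) about $\mathbb A$, $\mathbb B$, $\mathbb C$ separately. For the algebraic identity, I would first record what line 4 does: $v^{k+1/2} = v^k$ on $(-I^k)^c$ and $v^{k+1/2} = H(\delta^k)v^k = SB(\delta^k)Sv^k + g(S\delta^k)$ on $-I^k$; in matrix form, writing $\Psi^k = \mathrm{diag}(\mathbbm 1_{I^k})$ so that $S\Psi^k S = \mathrm{diag}(\mathbbm 1_{-I^k})$, this is $v^{k+1/2} = (\operatorname{Id} - S\Psi^k S)v^k + S\Psi^k S\,(SB(\delta^k)Sv^k + g(S\delta^k)) = (\operatorname{Id} - S\Psi^k S)v^k + S\Psi^k B(\delta^k)Sv^k + S\Psi^k S\, g(S\delta^k)$, using $S^2 = \operatorname{Id}$ and that $\Psi^k$ is diagonal. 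Then line 5 solves the constrained QVI (\ref{constrained_QVI}) on $D = (-I^k)^c$ with boundary data $w = v^{k+1/2}$, and by Theorem \ref{solution_constrained_QVI_1} (or the one-step update (\ref{VI}) in the case of Subroutine \ref{solve_one_player}) the resulting $v^{k+1}$ and $\varphi^{k+1} = (\psi^{k+1},\delta^{k+1})$ satisfy $\big((\operatorname{Id} - \overline\Psi)\tilde L - \overline\Psi\big)$-type relations on $D$ together with $v^{k+1} = v^{k+1/2}$ on $D^c = -I^k$; here $\overline\Psi = \mathrm{diag}(\mathbbm 1_{I^{k+1}})$.

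The main bookkeeping step is to combine these two into a single equation whose left side involves only $v^{k+1}$ and whose right side involves only $v^k$. On $D^c = -I^k$, the equation $v^{k+1} = v^{k+1/2}$ becomes, after substituting the expression above, $(\operatorname{Id})v^{k+1} = (\operatorname{Id} - S\Psi^k S)v^k + S\Psi^k B(\delta^k)Sv^k + S\Psi^k S g(S\delta^k)$; note on $-I^k$ we have $\operatorname{Id} - \overline\Psi - S\Psi^k S$ restricted to these rows equals $-S\Psi^k S$ since $I^{k+1} \subseteq \grid_{<0}$ is disjoint from $-I^k \subseteq \grid_{>0}$, so $\overline\Psi$ vanishes on these rows, matching the row $-S\Psi^k S\cdot(\operatorname{Id}+L)$ pattern up to the $L$ contribution which is absorbed because these rows of $v^{k+1}$ are set directly. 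On $D = (-I^k)^c$, the constrained QVI gives on the sub-rows in $I^{k+1}$: $v^{k+1} = Mv^{k+1}$... — more precisely the policy-iteration/fixed-point form yields $\big(\operatorname{Id} - \overline\Psi - S\Psi^k S\big)(\operatorname{Id}+L)v^{k+1} - \overline\Psi B(\overline\delta)v^{k+1}$ on the left (since on $D$, $S\Psi^k S = 0$), while the right-hand data $w_{D^c} = v^{k+1/2}_{D^c}$ enters through $\tilde f$ and $\tilde c$, producing exactly $S\Psi^k B(\delta^k)Sv^k$ (from $B(\overline\delta)_{DD^c}w_{D^c}$ and $L_{DD^c}w_{D^c}$ hitting the $-I^k$ block of $v^{k+1/2}$, whose $v^k$-part is $S B(\delta^k)S v^k$ after multiplying through) plus the constant $S\Psi^k S g(S\delta^k)$ and $\big(\operatorname{Id}-\overline\Psi-S\Psi^k S\big)f - \overline\Psi c(\overline\delta)$. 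Assembling the $D$ and $D^c$ rows gives precisely (\ref{FPPI-like}) with $\mathbb A,\mathbb B,\mathbb C$ as stated. I would present this row-block analysis carefully since the interplay of $\Psi^k$ (on $\grid_{<0}$), $S\Psi^k S$ (on $\grid_{>0}$) and $\overline\Psi$ (again on $\grid_{<0}$, hence supported on $D$) is where sign errors creep in; the key structural fact making it clean is that $I^k, I^{k+1} \subseteq \grid_{<0}$ while $-I^k \subseteq \grid_{>0}$, so $\overline\Psi$ and $S\Psi^k S$ have disjoint supports.

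For claim (iii), $\mathbb B(\varphi) = S\Psi B(\delta)S = \mathrm{diag}(S\psi)SB(\delta)S$: conjugation by the permutation $S$ preserves substochasticity, and left-multiplying by a diagonal $0/1$-matrix only zeroes out rows, so substochasticity of $B(\delta)$ (which holds by \ref{A1} and \ref{A2}, as noted in the remark following \ref{A2}) transfers immediately. Claim (iv) is just reading off the constant terms collected above and requires no argument. Claim (ii), that $\mathbb A(\varphi,\overline\varphi) = \operatorname{Id} - (\operatorname{Id} - \overline\Psi - S\Psi S)(\operatorname{Id}+L) - \overline\Psi B(\overline\delta)$ is a WCDD $\mathrm{L}_0$-matrix, is the part I expect to need the most care, and it is essentially a repetition of the argument in the proof of Theorem \ref{solution_constrained_QVI_1} but now on the full grid with the extra block $S\Psi S$. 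The $\mathrm{L}_0$-structure and weak diagonal dominance follow row-by-row: rows in $I^{k+1}$ (where $\overline\Psi = 1$) read $\operatorname{Id} - B(\overline\delta)$, which is WDD with nonnegative diagonal by \ref{A1},\ref{A2}; rows in $-I^k$ (where $S\Psi S = 1$, $\overline\Psi = 0$) read $\operatorname{Id} - S\Psi S(\operatorname{Id}+L) = \operatorname{Id} - SB'S$-type expression, actually $-SB(\delta^k)S$ shifted — wait, more carefully these rows read $\operatorname{Id} - (\operatorname{Id}+L)$ composed with the $S\Psi S$ selector, but since on these rows we set $v^{k+1}$ directly from line 4, the relevant row of $\mathbb A$ is just $\operatorname{Id}$ minus the off-diagonal transfer $SB(\delta^k)S$; in any case it is WDD (substochastic-type); remaining rows (in $C^*$, neither player intervening) read $-L$, which is SDD by \ref{A1}. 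For the chaining: by \ref{A0} applied to $\delta^{k+1}$, every row $x \in I^{k+1}$ admits a walk in $\mathrm{graph}\,B(\overline\delta)$ to $(I^{k+1})^c$; exactly as in the proof of Theorem \ref{solution_constrained_QVI_1} one checks this walk, possibly truncated at the first exit, is a walk in $\mathrm{graph}\,\mathbb A(\varphi,\overline\varphi)$ reaching an SDD row (the $-L$ rows are SDD, and a truncation point is SDD by the \ref{A1} argument). Rows in $-I^k$ similarly chain into $\grid_{\leq 0}$ and thence to an $-L$ row; here one uses \ref{A0} for $\delta^k$ together with the fact that $SB(\delta^k)S$ sends $\grid_{>0}$-rows toward $\grid_{\leq 0}$ by symmetry of the construction. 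Finally, WCDD $\mathrm{L}_0$ implies nonsingular M-matrix by Theorem \ref{characterization_theorem} and Proposition \ref{M-matrix_characterization} in the appendix. The hard part, then, is the chaining argument for the $S\Psi S$-block, where one must translate \ref{A0} (stated for the player on $\grid_{<0}$) through the symmetry operator to control walks originating in $\grid_{>0}$.
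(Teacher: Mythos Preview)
Your overall strategy---unwind lines~4 and~5 of Algorithm~\ref{sym_algo} into a single linear relation and then verify the matrix properties from \ref{A0}--\ref{A2}---is precisely the paper's approach, which the paper compresses to ``simple algebraic manipulation'' plus a one-line appeal to the assumptions. Your expansion is welcome, but there is a concrete miscomputation in the row structure of $\mathbb A$ that creates a spurious difficulty.

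Expanding $\mathbb A(\varphi,\overline\varphi) = \operatorname{Id} - (\operatorname{Id} - \overline\Psi - S\Psi S)(\operatorname{Id}+L) - \overline\Psi B(\overline\delta)$ row by row, using that $\overline\Psi$ and $S\Psi S$ have disjoint supports, gives: on rows in $\overline I$, the corresponding row of $\operatorname{Id} - B(\overline\delta)$; on rows in $\overline I^c\cap(-I)^c$, the row of $-L$; and on rows in $-I$, simply the row of $\operatorname{Id}$, since the factor $\operatorname{Id}-\overline\Psi-S\Psi S$ vanishes there. There is no $SB(\delta)S$ term in $\mathbb A$ on the $-I$ block---that term lives in $\mathbb B$, which multiplies $v^k$, not $v^{k+1}$. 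Hence the $-I$ rows of $\mathbb A$ are trivially SDD and your ``hard part, the chaining argument for the $S\Psi S$-block'' does not exist. The WCDD verification reduces entirely to the $\overline I$ block, and there your appeal to \ref{A0} and the proof pattern of Theorem~\ref{solution_constrained_QVI_1} is correct.

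This also untangles the derivation. In the full-grid form (\ref{FPPI-like}), the $D$-row equations $(\operatorname{Id}-B(\overline\delta))v^{k+1}=-c(\overline\delta)$ on $I^{k+1}$ and $-Lv^{k+1}=f$ on $D\setminus I^{k+1}$ are written for $v^{k+1}$ on \emph{all} of $\grid$: the boundary values $v^{k+1}|_{D^c}$, themselves fixed by the identity rows of the same system, enter the left-hand side through the off-diagonal columns of $\operatorname{Id}-B(\overline\delta)$ and $-L$. Consequently $\mathbb B(\varphi^k)v^k$ is nonzero only on the $-I^k$ rows, in agreement with $\mathbb B=\mathrm{diag}(S\psi)\,SB(\delta)S$. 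Your claim that $w_{D^c}$ enters through $\tilde f,\tilde c$ to produce $S\Psi^kB(\delta^k)Sv^k$ on the $D$ rows conflates the restricted formulation of Theorem~\ref{solution_constrained_QVI_1} (where boundary data is pushed to the right) with the full-grid formulation required here (where it stays on the left inside $\mathbb Av^{k+1}$); note that $S\Psi^kB(\delta^k)S$ vanishes on $D$ rows by construction, so the term you describe could not appear there anyway.
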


\begin{proof}
Using that $(v^{k+1},I^{k+1},\delta^{k+1})=\textsc{SolveImpulseControl}(v^{k+1/2},(-I^k)^c)$ solves the constrained QVI problem (\ref{constrained_QVI}) for $D=(-I^k)^c$ and $w=v^{k+1/2}$ (Theorem \ref{solution_constrained_QVI_1} or \ref{solution_constrained_QVI_2}), the recurrence relation (\ref{FPPI-like}) results from simple algebraic manipulation.

Given $\varphi,\overline\varphi\in\Phi$, \ref{A0} and \ref{A1} ensure $\mathbb A\big(\varphi,\overline\varphi\big)$ is a WCDD $\mbox{L}_0$-matrix, while \ref{A1} and \ref{A2} imply $\mathbb B\big(\varphi\big)$ is substochastic.
\end{proof}

The following corollary is immediate by induction. It gives a representation of the sequence of payoffs in terms of the improving strategies throughout the algorithm.

\begin{corollary}
Assume \emph{\ref{A0}--\ref{A2}}. Then, 
\begin{equation}
\label{sequence_representation}
v^{k+1}=\left(\prod_{j=k}^0\mathbb A^{-1}\mathbb B\big(\varphi^j,\varphi^{j+1}\big)\right)v^0 + \sum_{n=0}^k\left(\prod_{j=k}^{n+1}\mathbb A^{-1}\mathbb B\big(\varphi^j,\varphi^{j+1}\big)\right)\mathbb A^{-1}\mathbb C\big(\varphi^n,\varphi^{n+1}\big).\footnote{For any index $i\leq k$, $\prod_{j=k}^iA^j\defeq A^kA^{k-1}\dots A^i$.}
\end{equation}
\end{corollary}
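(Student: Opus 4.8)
The plan is to derive (\ref{sequence_representation}) directly from the one-step recurrence (\ref{FPPI-like}) of Proposition \ref{FPPI-like_result} by a straightforward induction on $k$, using the fact that $\mathbb A(\varphi^j,\varphi^{j+1})$ is invertible (being a nonsingular M-matrix, as asserted in part \textit{(ii)} of that proposition). Throughout I will abbreviate $\mathbb A^j \defeq \mathbb A(\varphi^j,\varphi^{j+1})$, $\mathbb B^j \defeq \mathbb B(\varphi^j)$ and $\mathbb C^j \defeq \mathbb C(\varphi^j,\varphi^{j+1})$, so that (\ref{FPPI-like}) reads $v^{k+1} = (\mathbb A^k)^{-1}\mathbb B^k v^k + (\mathbb A^k)^{-1}\mathbb C^k$. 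Note that the product $\mathbb A^{-1}\mathbb B(\varphi^j,\varphi^{j+1})$ appearing in the statement is to be read as $(\mathbb A^j)^{-1}\mathbb B^j$ with the same pair of strategies in both factors, and likewise $\mathbb A^{-1}\mathbb C(\varphi^n,\varphi^{n+1}) = (\mathbb A^n)^{-1}\mathbb C^n$.

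For the base case $k=0$, the claimed identity reads $v^1 = (\mathbb A^0)^{-1}\mathbb B^0 v^0 + (\mathbb A^0)^{-1}\mathbb C^0$ (the product $\prod_{j=0}^{0}$ is the single factor and the sum has the single term $n=0$ with an empty product $\prod_{j=0}^{1}$ equal to the identity), which is exactly (\ref{FPPI-like}) for $k=0$. For the inductive step, assume (\ref{sequence_representation}) holds for some $k\geq 0$. Apply (\ref{FPPI-like}) at level $k+1$ to write $v^{k+2} = (\mathbb A^{k+1})^{-1}\mathbb B^{k+1} v^{k+1} + (\mathbb A^{k+1})^{-1}\mathbb C^{k+1}$, substitute the induction hypothesis for $v^{k+1}$, and distribute $(\mathbb A^{k+1})^{-1}\mathbb B^{k+1}$ across the sum. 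The homogeneous term becomes $\Big(\prod_{j=k+1}^{0}(\mathbb A^j)^{-1}\mathbb B^j\Big)v^0$, matching the first term at level $k+1$ under the convention $\prod_{j=k}^i A^j = A^k A^{k-1}\cdots A^i$ stated in the footnote. Each summand with index $n\leq k$ picks up the extra left factor $(\mathbb A^{k+1})^{-1}\mathbb B^{k+1}$, turning $\prod_{j=k}^{n+1}(\mathbb A^j)^{-1}\mathbb B^j$ into $\prod_{j=k+1}^{n+1}(\mathbb A^j)^{-1}\mathbb B^j$, and the leftover term $(\mathbb A^{k+1})^{-1}\mathbb C^{k+1}$ is precisely the $n=k+1$ summand (with empty product $\prod_{j=k+1}^{k+2}=\operatorname{Id}$). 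Reindexing the sum to run from $n=0$ to $n=k+1$ then yields exactly (\ref{sequence_representation}) with $k$ replaced by $k+1$.

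There is essentially no analytic obstacle here: the only substantive inputs are the invertibility of each $\mathbb A^j$ (guaranteed by Proposition \ref{FPPI-like_result}\textit{(ii)}) and the linearity of matrix multiplication, so the corollary is genuinely "immediate by induction" as the text says. The one point that requires a little care — and the only place one could slip — is bookkeeping the empty-product and empty-sum conventions at the endpoints ($n=k+1$, $j$-ranges of the form $\prod_{j=k+1}^{k+2}$), making sure they are consistently interpreted as the identity operator so that the boundary summands are correctly accounted for; this is exactly what the footnote's convention is there to handle. I would therefore present the proof as a two-line induction: state the base case as a restatement of (\ref{FPPI-like}), then carry out the substitution-and-distribute step, flagging the reindexing of the sum explicitly.

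\begin{proof}
Write $\mathbb A^j\defeq\mathbb A(\varphi^j,\varphi^{j+1})$, $\mathbb B^j\defeq\mathbb B(\varphi^j)$ and $\mathbb C^j\defeq\mathbb C(\varphi^j,\varphi^{j+1})$, so that by Proposition \ref{FPPI-like_result} each $\mathbb A^j$ is invertible and (\ref{FPPI-like}) reads $v^{j+1}=(\mathbb A^j)^{-1}\mathbb B^j v^j+(\mathbb A^j)^{-1}\mathbb C^j$. We prove (\ref{sequence_representation}) by induction on $k$. For $k=0$, (\ref{sequence_representation}) reduces (using the empty-product convention of the footnote) to $v^1=(\mathbb A^0)^{-1}\mathbb B^0 v^0+(\mathbb A^0)^{-1}\mathbb C^0$, which is (\ref{FPPI-like}) at level $0$. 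Assume (\ref{sequence_representation}) holds for some $k\geq 0$. Applying (\ref{FPPI-like}) at level $k+1$ and substituting the induction hypothesis gives
\begin{equation*}
v^{k+2}=(\mathbb A^{k+1})^{-1}\mathbb B^{k+1}\!\left[\left(\prod_{j=k}^{0}(\mathbb A^j)^{-1}\mathbb B^j\right)v^0 + \sum_{n=0}^{k}\left(\prod_{j=k}^{n+1}(\mathbb A^j)^{-1}\mathbb B^j\right)(\mathbb A^n)^{-1}\mathbb C^n\right] + (\mathbb A^{k+1})^{-1}\mathbb C^{k+1}.
\end{equation*}
Distributing $(\mathbb A^{k+1})^{-1}\mathbb B^{k+1}$ over the bracket, the homogeneous term becomes $\big(\prod_{j=k+1}^{0}(\mathbb A^j)^{-1}\mathbb B^j\big)v^0$; each summand of index $n\le k$ acquires the left factor $(\mathbb A^{k+1})^{-1}\mathbb B^{k+1}$, so $\prod_{j=k}^{n+1}(\mathbb A^j)^{-1}\mathbb B^j$ becomes $\prod_{j=k+1}^{n+1}(\mathbb A^j)^{-1}\mathbb B^j$; and the trailing term $(\mathbb A^{k+1})^{-1}\mathbb C^{k+1}$ is the $n=k+1$ summand, since $\prod_{j=k+1}^{k+2}(\mathbb A^j)^{-1}\mathbb B^j=\operatorname{Id}$. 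Hence
\begin{equation*}
v^{k+2}=\left(\prod_{j=k+1}^{0}(\mathbb A^j)^{-1}\mathbb B^j\right)v^0 + \sum_{n=0}^{k+1}\left(\prod_{j=k+1}^{n+1}(\mathbb A^j)^{-1}\mathbb B^j\right)(\mathbb A^n)^{-1}\mathbb C^n,
\end{equation*}
which is (\ref{sequence_representation}) with $k$ replaced by $k+1$. This completes the induction.
\end{proof}
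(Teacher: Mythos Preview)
Your proof is correct and takes exactly the approach the paper indicates: the paper simply states that the corollary is ``immediate by induction'' from the recurrence (\ref{FPPI-like}) of Proposition \ref{FPPI-like_result}, and you have carefully written out precisely that induction, including the bookkeeping of the product conventions at the endpoints.
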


We now establish some properties of the strategy-dependent matrix coefficients that will be useful in the sequel. Given a WDD (resp. substochastic) matrix $A\in \mathbb R^{\grid\times\grid}$, we define its set of ``non-trouble states" (or rows) as 
$$
J[A]\defeq \{x\in\grid:\mbox{ row }x\mbox{ of }A\mbox{ is SDD}\}\mbox{ (resp. }\hat J[A]\defeq \{x\in\grid:\mbox{ row }x\mbox{ of }A\mbox{ sums less than one}\}),
$$ 
and its \textit{index of connectivity} $\mbox{con}A$ (resp. \textit{index of contraction} $\widehat{\mbox{con}}A$) by computing for each state the least length that needs to be walked on graph$A$ to reach a non-trouble one, and then taking the maximum over all states (more details in Appendix \ref{appendix:matrices}). This recently introduced concept gives an equivalent charaterization of the WCDD property for a WDD matrix as con$A<+\infty$, and can be efficiently checked for sparse matrices in $O(|\grid|)$ operations \cite{A}. On the other hand, if $A$ is substochastic then $\widehat{\mbox{con}}A<\infty$ if and only if its spectral radius verifies $\rho(A)<1$ (Theorem \ref{contraction}). The proof of the following lemma can be found in Appendix \ref{proof:lemma}.

\begin{lemma}
\label{coefficients_properties}
Assume \emph{\ref{A0}--\ref{A2}}. Then for all $\varphi,\overline\varphi\in\Phi$, $\mathbb A^{-1}\mathbb B\big(\varphi,\overline\varphi\big)$ is substochastic, $(\mathbb A-\mathbb B)\big(\varphi,\overline\varphi\big)$ is a WDD $L_0$-matrix and $\widehat{\emph{con}}\big[\mathbb A^{-1}\mathbb B\big(\varphi,\overline\varphi\big)\big]\leq\emph{con}\big[(\mathbb A-\mathbb B)\big(\varphi,\overline\varphi\big)\big]$.
\end{lemma}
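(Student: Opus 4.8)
The plan is to establish the three claims of Lemma~\ref{coefficients_properties} in sequence, exploiting the structural properties already recorded in Proposition~\ref{FPPI-like_result}: that $\mathbb A(\varphi,\overline\varphi)$ is a WCDD $\mathrm L_0$-matrix (hence a nonsingular M-matrix) and that $\mathbb B(\varphi)$ is substochastic (in particular nonnegative). Fix $\varphi,\overline\varphi\in\Phi$ and abbreviate $\mathbb A=\mathbb A(\varphi,\overline\varphi)$, $\mathbb B=\mathbb B(\varphi)$. First I would prove that $\mathbb A^{-1}\mathbb B$ is substochastic. Since $\mathbb A$ is a nonsingular M-matrix, $\mathbb A^{-1}\geq 0$, so $\mathbb A^{-1}\mathbb B\geq 0$. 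For the row-sum bound, apply $\mathbb A^{-1}\mathbb B$ to the all-ones vector $\mathbbm 1$: it suffices to show $\mathbb A^{-1}\mathbb B\,\mathbbm 1\leq\mathbbm 1$, i.e.\ $\mathbb B\,\mathbbm 1\leq\mathbb A\,\mathbbm 1$ (using $\mathbb A^{-1}\geq 0$ to preserve the inequality). This last inequality is exactly the statement that $(\mathbb A-\mathbb B)\,\mathbbm 1\geq 0$, which is the row-by-row assertion that $\mathbb A-\mathbb B$ is weakly diagonally dominant \emph{with the right sign pattern} — so the second claim, that $(\mathbb A-\mathbb B)(\varphi,\overline\varphi)$ is a WDD $\mathrm L_0$-matrix, should be proved first and then reused here.

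For the WDD $\mathrm L_0$-matrix claim I would compute $(\mathbb A-\mathbb B)(\varphi,\overline\varphi)$ explicitly from the formulas in Proposition~\ref{FPPI-like_result}: it equals $\operatorname{Id}-\big(\operatorname{Id}-\overline\Psi-S\Psi S\big)(\operatorname{Id}+L)-\overline\Psi B(\overline\delta)-S\Psi B(\delta)S$. The diagonal projectors $\operatorname{Id}-\overline\Psi-S\Psi S$, $\overline\Psi$, and $S\Psi S$ are disjointly supported (the first is supported off $\overline I\cup(-I)$, the second on $\overline I\subseteq\grid_{<0}$, the third on $-I\subseteq\grid_{>0}$, and $\overline I\cap(-I)=\emptyset$), so the matrix is block-structured by rows: on rows outside $\overline I\cup(-I)$ it is $\operatorname{Id}-(\operatorname{Id}+L)=-L$; on rows in $\overline I$ it is $\operatorname{Id}-B(\overline\delta)$; on rows in $-I$ it is $S(\operatorname{Id}-B(\delta))S$. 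By \ref{A1}, $-L$ is an SDD $\mathrm L_0$-matrix and $\operatorname{Id}-B(\delta)$, $\operatorname{Id}-B(\overline\delta)$ are WDD $\mathrm L_0$-matrices; conjugation by the permutation $S$ preserves WDD, the $\mathrm L_0$ sign pattern, and graph-walk structure (it just relabels rows/columns $x\leftrightarrow -x$). Hence $(\mathbb A-\mathbb B)(\varphi,\overline\varphi)$ is a WDD $\mathrm L_0$-matrix. This also re-confirms $(\mathbb A-\mathbb B)\,\mathbbm 1\geq 0$, closing the first claim.

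The third claim, $\widehat{\mathrm{con}}\big[\mathbb A^{-1}\mathbb B(\varphi,\overline\varphi)\big]\leq\mathrm{con}\big[(\mathbb A-\mathbb B)(\varphi,\overline\varphi)\big]$, is where the real work lies. The idea is to relate a walk witnessing finiteness of $\mathrm{con}$ for the M-matrix $\mathbb A-\mathbb B$ to a walk witnessing finiteness of $\widehat{\mathrm{con}}$ for the substochastic matrix $\mathbb A^{-1}\mathbb B$, with no increase in length. I expect to need the following two observations. (a) A row $x$ of the substochastic matrix $\mathbb A^{-1}\mathbb B$ sums to strictly less than one \emph{as soon as} row $x$ of $\mathbb A-\mathbb B$ is SDD: indeed $(\mathbb A^{-1}\mathbb B\,\mathbbm 1)_x=1-(\mathbb A^{-1}(\mathbb A-\mathbb B)\mathbbm 1)_x<1$ when the defect vector $(\mathbb A-\mathbb B)\mathbbm 1$ is positive at a state reachable from $x$ in graph$(\mathbb A)$, and since $\mathbb A-\mathbb B\leq\mathbb A$ entrywise off-diagonal... — more carefully, $J[(\mathbb A-\mathbb B)]\subseteq\hat J[\mathbb A^{-1}\mathbb B]$ up to reachability, which is precisely the kind of comparison proved for the analogous single-matrix statement in \cite{A}. (b) graph$(\mathbb A^{-1}\mathbb B)$ contains, for every state $x$, a walk of length $\le$ (the $\mathbb A-\mathbb B$ walk-length) reaching a contraction row; this follows because $\mathbb A^{-1}\geq 0$ spreads mass along graph$(\mathbb A)$-walks and $\mathbb B$ contributes the "propagation" edges, so any graph$(\mathbb A-\mathbb B)$-walk from $x$ to an SDD row lifts to a graph$(\mathbb A^{-1}\mathbb B)$-walk from $x$ to a row of $\hat J[\mathbb A^{-1}\mathbb B]$. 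Taking the maximum over $x$ gives the claimed inequality. The main obstacle is making observation (b) precise: one must track how zero/nonzero entries of $\mathbb A^{-1}\mathbb B$ relate to walks in graph$(\mathbb A-\mathbb B)$ versus graph$(\mathbb A)$, and the cleanest route is probably to invoke the matrix-sequence / WCDD machinery of \cite{A} referenced in Appendix~\ref{appendix:matrices} (Theorem~\ref{contraction} and the surrounding results) rather than argue from scratch; I would phrase the proof as an application of those results to the pair $(\mathbb A,\mathbb A-\mathbb B)$, verifying their hypotheses from \ref{A0}--\ref{A2} and the block description above.
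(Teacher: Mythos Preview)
Your treatment of the first two claims is essentially the paper's: the row-by-row identification of $(\mathbb A-\mathbb B)(\varphi,\overline\varphi)$ as $-L$ on $\overline I^c\cap(-I)^c$, $\operatorname{Id}-B(\overline\delta)$ on $\overline I$, and $S(\operatorname{Id}-B(\delta))S$ on $-I$ is exactly how the paper establishes the WDD $\mathrm L_0$-property, and the substochasticity argument via $\mathbb A^{-1}\geq 0$ and $(\mathbb A-\mathbb B)\mathbbm 1\geq 0$ matches as well.

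The gap is in claim three. Your observation (a), that $J[\mathbb A-\mathbb B]\subseteq\hat J[\mathbb A^{-1}\mathbb B]$, is correct and is one of the paper's intermediate steps. But observation (b) and the plan to ``invoke the matrix-sequence / WCDD machinery of \cite{A}\dots\ to the pair $(\mathbb A,\mathbb A-\mathbb B)$'' will not go through: there is no general theorem giving $\widehat{\mathrm{con}}[\mathbb A^{-1}\mathbb B]\leq\mathrm{con}[\mathbb A-\mathbb B]$ from the hypotheses you have isolated (M-matrix $\mathbb A$, substochastic $\mathbb B$, WDD $\mathrm L_0$ difference). The inequality genuinely depends on two structural facts specific to this problem that your sketch does not use: on rows indexed by $-I$ one has $\mathbb A=\operatorname{Id}$, and on rows indexed by $\overline I$ one has $\mathbb B=0$ (equivalently, $\mathbb A$ and $\mathbb A-\mathbb B$ agree there).

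The paper exploits these as follows. For a row $x_i\in -I$, since $\mathbb A$ is the identity there, any off-diagonal edge $x_i\to x_j$ of $\mathbb A-\mathbb B$ is already an edge of $\operatorname{Id}-\mathbb A^{-1}\mathbb B$ (one checks $[\operatorname{Id}-\mathbb A^{-1}\mathbb B]_{ij}\leq\mathbb A^{-1}_{ii}[\mathbb A-\mathbb B]_{ij}<0$). For a segment of a walk lying entirely in $\overline I$, since $\mathbb A-\mathbb B$ agrees with $\mathbb A$ on those rows, that segment is a walk in graph$\,\mathbb A$, hence yields a single strictly positive entry of $\mathbb A^{-1}$; this lets one collapse the whole $\overline I$-segment either directly into $\hat J[\mathbb A^{-1}\mathbb B]$ (if the segment ends at an SDD row of $\mathbb A-\mathbb B$) or into a single edge of $\operatorname{Id}-\mathbb A^{-1}\mathbb B$ jumping to the next $-I$-state. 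Alternating these two mechanisms along a walk in graph$(\mathbb A-\mathbb B)$ produces a walk in graph$(\operatorname{Id}-\mathbb A^{-1}\mathbb B)$ to $\hat J[\mathbb A^{-1}\mathbb B]$ that is no longer. Your heuristic ``$\mathbb A^{-1}$ spreads mass along graph$(\mathbb A)$-walks and $\mathbb B$ propagates'' is pointing in the right direction but does not by itself deliver either the edge-preservation on $-I$ or the walk-collapsing on $\overline I$; both require the row-block identities above, and the argument must be done by hand.
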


As previously mentioned, system (\ref{dQVIs}) may have no solution. The matrix coefficients introduced in this section allow us to algebraically characterize the existence of such solutions through strategy-dependent linear systems of equations. 

\begin{proposition}
\label{equivalent_dQVIs1}
Assume \emph{\ref{A0}--\ref{A2}}. 
Then the following statements are equivalent:
\begin{enumerate}[label=(\roman*)]
\item $v^*$ solves the system of QVIs (\ref{dQVIs}).
\item $\mathbb A\big(\varphi^*,\varphi^*\big)v^*=\mathbb B\big(\varphi^*\big)v^* +\mathbb C\big(\varphi^*,\varphi^*\big)$.
\end{enumerate}
\end{proposition}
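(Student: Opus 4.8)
The plan is to unpack the definitions of $\mathbb A,\mathbb B,\mathbb C$ evaluated at the diagonal strategy $(\varphi^*,\varphi^*)$ and show that the single vector equation in \textit{(ii)} decouples, row by row, into exactly the two cases of the system \eqref{dQVIs}. Recall $\psi^*=\mathbbm 1_{I^*}$ with $I^*=\{Lv^*+f\le Mv^*-v^*\}\cap\grid_{<0}$ and $\delta^*=\delta^*(v^*)$; write $\Psi^*=\mathrm{diag}(\psi^*)$. The key bookkeeping fact is that $\grid$ is partitioned into three pieces: $-I^*$ (where $S\Psi^* S$ has a $1$ on the diagonal and $\Psi^*$ has a $0$, since $I^*\subseteq\grid_{<0}$ forces $-I^*\subseteq\grid_{>0}$ and these are disjoint), $I^*$ (where $\Psi^*$ has a $1$ and $S\Psi^* S$ has a $0$), and $(-I^*)^c\cap (I^*)^c=(I^*\cup(-I^*))^c\backdefeq C^*\setminus(-I^*)$, i.e. the part of $C^*$ not equal to $-I^*$ — actually $C^*=(I^*)^c$ so this third set is $C^*\cap(-I^*)^c$, which is exactly $-C^*$'s complement's... let me just say: on the third set both $\Psi^*$ and $S\Psi^* S$ vanish on the diagonal. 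So I would split the verification into these three row-blocks.

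First I would treat the block of rows $x\in -I^*$. There $\overline\Psi=\Psi^*$ contributes $0$ to that row, $S\Psi^* S$ contributes $1$, so $\mathbb A(\varphi^*,\varphi^*)$ restricted to that row is $\operatorname{Id}$ row (from the $\operatorname{Id}$ term, since $(\operatorname{Id}-\overline\Psi - S\Psi S)$ has a zero row there and $\overline\Psi B(\overline\delta)$ has a zero row there), $\mathbb B(\varphi^*)=S\Psi^* S\,S B(\delta^*) S$ has row equal to that row of $SB(\delta^*)S$, and $\mathbb C$ row equals $(S\Psi^* S g(S\delta^*))$ row $=$ that row of $Sg(S\delta^*)$. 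Hence equation \textit{(ii)} on this block reads $v^*(x)=\big(SB(\delta^*)Sv^*\big)(x)+\big(Sg(S\delta^*)\big)(x)=H(\delta^*)v^*(x)=Hv^*(x)$, which is precisely the first line of \eqref{dQVIs}. Conversely this row of \eqref{dQVIs} gives back exactly this equation. Next, on the block $x\in I^*$: here $\overline\Psi=\Psi^*$ has a $1$, $S\Psi^* S$ has a $0$, so $(\operatorname{Id}-\overline\Psi - S\Psi S)$ kills that row, $\mathbb A$'s row is $\operatorname{Id}\text{-row}-B(\delta^*)\text{-row}$, $\mathbb B$'s row is $0$ (since $S\Psi^* S$ has a $0$ there), and $\mathbb C$'s row is $-c(\delta^*)(x)$. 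So \textit{(ii)} reads $v^*(x)-B(\delta^*)v^*(x)=-c(\delta^*)(x)$, i.e. $B(\delta^*)v^*(x)-c(\delta^*)(x)=v^*(x)$; using that $\delta^*(v^*)\in\argmax_{\delta}\{B(\delta)v^*-c(\delta)\}$ (componentwise), this says $Mv^*(x)=v^*(x)$, i.e. $Mv^*(x)-v^*(x)=0$, which on $I^*\subseteq -C^*$ is exactly what the max-equation $\max\{Lv^*+f,Mv^*-v^*\}=0$ demands together with $Lv^*(x)+f(x)\le 0$; conversely, on $I^*$ by definition $Lv^*+f\le Mv^*-v^*$ and the max being $0$ forces $Mv^*-v^*=0$. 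Finally, on the third block $x\in -C^*\setminus I^* = C^*\cap(-I^*)^c$ (equivalently $-C^*$ minus $I^*$... I will phrase it as the complement of $I^*\cup(-I^*)$ intersected with $-C^*$), both diagonal projectors vanish: $\mathbb A$'s row is $\operatorname{Id}\text{-row}-(\operatorname{Id}+L)\text{-row}=-L\text{-row}$, $\mathbb B$'s row is $0$, $\mathbb C$'s row is $f(x)$, so \textit{(ii)} reads $-Lv^*(x)+f(x)=0$... wait, sign: $\mathbb A v^* = \mathbb B v^* + \mathbb C$ gives $-Lv^*(x) = f(x)$? No — $\mathbb A$ row contributes $\operatorname{Id}-(\operatorname{Id}+L)=-L$ so $(-Lv^*)(x) = (\mathbb C)(x)=f(x)$, hence $Lv^*(x)+f(x)=0$; combined with the fact that on this block $x\notin I^*$ means $Lv^*+f>Mv^*-v^*$ is false only if... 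I will instead directly observe that $x\notin I^*$ gives $Lv^*(x)+f(x)>Mv^*(x)-v^*(x)$ OR $x\ge 0$; and on $-C^*$ one has $Lv^*+f\le 0$ anyway, so the max-equation reduces to $Lv^*+f=0$. Conversely $Lv^*+f=0$ on this block is the content of the max-equation there.

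I would organize these three computations cleanly, perhaps after noting once and for all the block structure of the diagonal of $\Psi^*$ and $S\Psi^* S$, so each direction of the equivalence is just reading the same row identities forwards or backwards; the $\argmax$ characterization of $\delta^*$ is what converts the linear identity $B(\delta^*)v^*-c(\delta^*)=v^*$ on $I^*$ into the QVI statement $Mv^*-v^*=0$, and Remark \ref{r:zero_impulse} / Definition \ref{sym_game} (giving $Mv^*-v^*<0$ on $\grid_{\ge 0}$ and hence $-C^*\supseteq\grid_{\le0}$, so $I^*$ and the third block together make up $-C^*$) is what lets me match ``$\le$'' constraints with the ``$\max$'' equation. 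The main obstacle — really the only delicate point — is handling the inequality side: showing that on $I^*$ the relation $Lv^*+f\le 0$ (needed so the $\max$ is genuinely attained at the $M$-term) holds, and that on the complementary block within $-C^*$ the relation $Lv^*+f\le Mv^*-v^*$ fails so the $\max$ is attained at $L$; both follow from the definition of $I^*$ as $\{Lv^*+f\le Mv^*-v^*\}\cap\grid_{<0}$ together with $Mv^*-v^*<0$ on the positive grid, but one must be careful that $-C^*$ is partitioned as $I^*\sqcup(\text{third block})$ and that $I^*\subseteq -C^*$ in the first place, which again uses $Mv^*-v^*\le 0$ on all of $-C^*$. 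I expect the write-up to be short, essentially a table of three rows, with no analytic estimates needed.
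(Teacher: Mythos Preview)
Your proposal is correct. The paper states Proposition~\ref{equivalent_dQVIs1} without proof, implicitly treating it as the same ``simple algebraic manipulation'' invoked in the proof of Proposition~\ref{FPPI-like_result}; your row-by-row decomposition into the three blocks $-I^*$, $I^*$, and $(I^*\cup -I^*)^c$ is precisely how that manipulation is carried out, and your handling of the one delicate point---matching the FPPI definition $I^*=\{Lv^*+f\le Mv^*-v^*\}\cap\grid_{<0}$ with the QVI definition $\{Mv^*-v^*=0\}$ via the observation that $Mv^*-v^*<0$ on $\grid_{\ge0}$---is correct in both directions.
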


As mentioned in Remark \ref{r:interpretation}, Assumption \ref{A0} constrains the type of strategies the player can use, but without taking into account the opponent's response. This is enough for the single-player constrained problems to have a solution and, therefore, for Algorithm \ref{sym_algo} to be well defined. But we cannot expect this restriction to be sufficient in the study of the two-player game and the convergence of the overall routine. 

In order to improve the result of Proposition \ref{equivalent_dQVIs1} 
let us consider the following stronger version of \ref{A0} reflecting the interaction between the player and the opponent.

\begin{enumerate}[label=(A\arabic*'),start=0]
\item \label{A0'} 
For each pair of strategies $\varphi,\overline\varphi\in\Phi$, and for each $x\in \overline I\cup (-I)$, there exists a walk in graph$\big(\overline\Psi B(\overline\delta)+S\Psi B(\delta)S\big)$ from row $x$ to some row $y\in \overline C\cap  C$, where $\overline C={\overline I}^c,\ C=I^c$.
\end{enumerate} 

\begin{remark}(\textit{Interpretation})
\label{r:interpretation2}
If $\overline\varphi,-\varphi$ are the strategies used by the player and the opponent respectively,\footnote{The slight abuse of notation $-\varphi$ stands for the strategy symmetric to $\varphi$, i.e., $-\varphi=(-I,-\delta(-x))$.} then \ref{A0'} asserts that states in their intervention regions will eventually be shifted to the common continuation region. This precludes infinite simultaneous interventions and emulates the admissibility condition of the continuous-state case. Fixing $I=\emptyset$ we recover \ref{A0}. Additionally, \ref{A0'} together with \ref{A1} imply that $(\mathbb A-\mathbb B)(\varphi,\overline\varphi)$ is a WCDD $\mbox{L}_0$-matrix,\footnote{\ref{A1} guarantees the WDD $\mbox{L}_0$-property (see proof of Lemma \ref{coefficients_properties}) with SDD $i$-th row for any $x_i\in\overline C\cap C$, as it coincides with the $i$-th row of $-L$. Since $(\mathbb A-\mathbb B)(\varphi,\overline\varphi)$ and $\operatorname{Id} - \big(\overline\Psi B(\overline\delta)+S\Psi B(\delta)S\big)$ agree for any row indexed in $\overline I\cup (-I)$ and graph$\big(\operatorname{Id} - \big(\overline\Psi B(\overline\delta)+S\Psi B(\delta)S\big)\big)$ is equal to graph$\big(\overline\Psi B(\overline\delta)+S\Psi B(\delta)S\big)$ except possibly for self loops, adding \ref{A0'} means that any WDD row of $(\mathbb A-\mathbb B)(\varphi,\overline\varphi)$ is connected to some SDD row.} hence an $M$-matrix. This is another one of the assumptions of the classical fixed-point policy iteration \cite{HFL1}.
\end{remark}

Under this new assumption, the $\varphi^*=\varphi^*(v^*)$-dependent systems of Proposition \ref{equivalent_dQVIs1} will admit a unique solution. Then solving the original problem (\ref{dQVIs}) amounts to finding $v^*\in\mathbb R^\grid$ that solves its induced linear system of equations.
\begin{proposition}
\label{equivalent_dQVIs2}
Assume \emph{\ref{A0'},\ref{A1},\ref{A2}}. In the context of Proposition \ref{equivalent_dQVIs1}, the following statements are also equivalent:
\begin{enumerate}[label=(\roman*),start=4]
\item $v^*=(\mathbb A-\mathbb B)^{-1}\mathbb C\big(\varphi^*,\varphi^*\big)$.
\item $v^*= (\operatorname{Id}-\mathbb A^{-1}\mathbb B)^{-1}\mathbb A^{-1}\mathbb C\big(\varphi^*,\varphi^*\big)=\sum_{n\geq 0}\big(\mathbb A^{-1}\mathbb B\big)^n\mathbb A^{-1}\mathbb C\big(\varphi^*,\varphi^*\big)$. (cf. equation (\ref{sequence_representation}).)
\end{enumerate}
\end{proposition}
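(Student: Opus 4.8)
The plan is to show the chain of equivalences (i)$\Leftrightarrow$(ii)$\Rightarrow$(iv)$\Leftrightarrow$(v), having already established (i)$\Leftrightarrow$(ii) in Proposition \ref{equivalent_dQVIs1}, so it suffices to prove (ii)$\Leftrightarrow$(iv) and (iv)$\Leftrightarrow$(v) under the stronger hypothesis \ref{A0'}. The entire argument hinges on one fact: under \ref{A0'}, \ref{A1}, \ref{A2}, the matrix $(\mathbb{A}-\mathbb{B})(\varphi^*,\varphi^*)$ is a WCDD $\mathrm{L}_0$-matrix, hence a nonsingular $M$-matrix (this is exactly the content of the footnote in Remark \ref{r:interpretation2}, applied with $\varphi=\overline\varphi=\varphi^*$). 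Once invertibility is in hand, everything else is algebra.

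First I would record the identity $\mathbb{A}(\varphi^*,\varphi^*) - \mathbb{B}(\varphi^*) = (\mathbb{A}-\mathbb{B})(\varphi^*,\varphi^*)$, which is just the definition of the combined coefficient, and note it is invertible by the $M$-matrix property above. Then (ii), which reads $\mathbb{A}(\varphi^*,\varphi^*)v^* = \mathbb{B}(\varphi^*)v^* + \mathbb{C}(\varphi^*,\varphi^*)$, is equivalent to $(\mathbb{A}-\mathbb{B})(\varphi^*,\varphi^*)v^* = \mathbb{C}(\varphi^*,\varphi^*)$ by bringing $\mathbb{B}(\varphi^*)v^*$ to the left-hand side, and this in turn is equivalent to $v^* = (\mathbb{A}-\mathbb{B})^{-1}\mathbb{C}(\varphi^*,\varphi^*)$, which is (iv). This gives (ii)$\Leftrightarrow$(iv).

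For (iv)$\Leftrightarrow$(v) I would use that $\mathbb{A}(\varphi^*,\varphi^*)$ is itself a nonsingular $M$-matrix by Proposition \ref{FPPI-like_result}(ii), so $\mathbb{A}^{-1}\mathbb{C}(\varphi^*,\varphi^*)$ and $\mathbb{A}^{-1}\mathbb{B}(\varphi^*)$ are well defined. Then $(\mathbb{A}-\mathbb{B})v^* = \mathbb{C}$ is equivalent to $(\operatorname{Id} - \mathbb{A}^{-1}\mathbb{B})v^* = \mathbb{A}^{-1}\mathbb{C}$ after left-multiplying by $\mathbb{A}^{-1}$ (and conversely by left-multiplying by $\mathbb{A}$). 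To invert $\operatorname{Id} - \mathbb{A}^{-1}\mathbb{B}(\varphi^*,\varphi^*)$ and expand it as the Neumann series $\sum_{n\geq 0}(\mathbb{A}^{-1}\mathbb{B})^n$, I need $\rho(\mathbb{A}^{-1}\mathbb{B}(\varphi^*,\varphi^*)) < 1$. Here I would invoke Lemma \ref{coefficients_properties}: $\mathbb{A}^{-1}\mathbb{B}(\varphi^*,\varphi^*)$ is substochastic with $\widehat{\mathrm{con}}[\mathbb{A}^{-1}\mathbb{B}] \leq \mathrm{con}[(\mathbb{A}-\mathbb{B})]$, and under \ref{A0'} the right-hand side is finite (WCDD property), so $\widehat{\mathrm{con}}[\mathbb{A}^{-1}\mathbb{B}] < \infty$, which by Theorem \ref{contraction} gives exactly $\rho(\mathbb{A}^{-1}\mathbb{B}) < 1$. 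Hence the Neumann series converges and equals $(\operatorname{Id} - \mathbb{A}^{-1}\mathbb{B})^{-1}$, yielding (v). The cross-reference to equation (\ref{sequence_representation}) is then just the observation that (v) is the formal limit of the algorithm's iterates when the strategy stabilizes at $\varphi^*$.

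The only genuine obstacle is verifying that \ref{A0'} (rather than merely \ref{A0}) is what upgrades Proposition \ref{equivalent_dQVIs1}: under \ref{A0} alone, $(\mathbb{A}-\mathbb{B})(\varphi^*,\varphi^*)$ is only WDD, not WCDD, so it may be singular and none of the inversions above are licensed. I would therefore spend the bulk of the write-up making precise the footnote argument of Remark \ref{r:interpretation2} in the symmetric case $\varphi=\overline\varphi=\varphi^*$: the rows of $(\mathbb{A}-\mathbb{B})(\varphi^*,\varphi^*)$ indexed outside $\psi^{*} \cup (-\psi^{*})$ coincide with rows of $-L$, hence are SDD by \ref{A1}; the rows indexed inside $I^* \cup (-I^*)$ coincide with rows of $\operatorname{Id} - (\Psi^* B(\delta^*) + S\Psi^* B(\delta^*)S)$, whose graph equals that of $\Psi^* B(\delta^*) + S\Psi^* B(\delta^*)S$ up to self-loops; so a walk to $\overline C \cap C$ guaranteed by \ref{A0'} is a walk to an SDD row in graph$(\mathbb{A}-\mathbb{B})(\varphi^*,\varphi^*)$. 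Then Theorem \ref{characterization_theorem} and Proposition \ref{M-matrix_characterization} from the appendix close the gap. Everything else is routine linear algebra.
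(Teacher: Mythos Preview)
Your proposal is correct and follows essentially the same approach as the paper's proof: invertibility of $(\mathbb A-\mathbb B)(\varphi^*,\varphi^*)$ via the WCDD property granted by \ref{A0'} (Remark \ref{r:interpretation2}), then the Neumann series representation via Lemma \ref{coefficients_properties} and Theorem \ref{contraction}. The paper's write-up is simply more compressed, citing Remark \ref{r:interpretation2} directly rather than unpacking the footnote argument, but the logical content is the same.
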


\begin{proof}
Both expressions result from rewriting and solving the systems of Proposition \ref{equivalent_dQVIs1}. Assumptions \ref{A0'},\ref{A1} guarantee that $(\mathbb A-\mathbb B)\big(\varphi^*,\varphi^*\big)$ is WCDD (Remark \ref{r:interpretation2}) and, in particular, nonsingular. Then {\textit{(v)}} is due to Lemma \ref{coefficients_properties}, Theorem \ref{contraction} and the matrix power series expansion $(\operatorname{Id}-X)^{-1}=\sum_{n\geq 0}X^n$, when $\rho(X)<1$.  
\end{proof}

\subsection{Convergence analysis}
\label{s:convergence}

We now study the convergence properties of Algorithm \ref{sym_algo}. Henceforth,  the UIP refers to the obvious discrete analogous of Definition \ref{UIP}, where we replace the domain $\mathbb R$, the impulse constraints $\mathcal Z$ and the operator $\mathcal M$ by their discretizations $\grid,\ Z$ and $M$ respectively.

The obvious first question to address is whether when Algorithm \ref{sym_algo} converges, it does so to a solution of the system of QVIs (\ref{dQVIs}). Unlike in the classical Bellman problem (\ref{Bellman_problem}), problem (\ref{dQVIs}) is intrinsically dependent on the particular strategy chosen by the player (see Propositions \ref{equivalent_dQVIs1} and \ref{equivalent_dQVIs2}). Accordingly, we start with a lemma addressing what can be said about the convergence of the strategies $(\varphi^k)$ when the payoffs $(v^k)$ converge.

\begin{notation}
$\partial I^*\defeq \{Lv^*+f=Mv^*-v^*\}\cap\grid_{<0}$ denotes the ``border" of the intervention region $\{Lv^*+f\leq Mv^*-v^*\}\cap\grid_{<0}$ defined by $v^*$.
\end{notation}
  
\begin{lemma}
\label{convergence_strategies}
Assume \emph{\ref{A0}-\ref{A2}} and suppose $v^k\to v^*$. Then:
\begin{enumerate}[label=(\roman*)]
\item $\psi^k\to\psi^*$ in $(\partial I^*)^c$ and $Mv^k\to Mv^*$.
\item If $\overline\psi,\overline\delta$ are any two limit points of $(\psi^k),(\delta^k)$ resp.,\footnote{By ``limit point" we mean the limit of a convergent subsequence.} then 
$$
\overline\delta\in\argmax_{\delta\in Z}\big\{B(\delta)v^*-c(\delta)\big\},\quad\overline\psi=0\mbox{ on }\grid_{>=0}\quad\mbox{and}\quad\overline\psi\in\argmax_{i\in\{0,1\}}\big\{O_iv^*\big\}\mbox{ on }\grid_{<0},
$$
with $O_0v=Lv+f$ and $O_1v=Mv-v$.
\item If $v^*$ has the UIP, then $\delta^k\to\delta^*(v^*)$ and $Hv^k\to Hv^*$.
\end{enumerate}
\end{lemma}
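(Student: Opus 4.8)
\textbf{Proof plan for Lemma \ref{convergence_strategies}.}

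The plan is to exploit the finiteness of the strategy spaces throughout. Since $Z$ is a finite set and there are only finitely many candidate intervention regions $I\subseteq\grid_{<0}$, the sequences $(\delta^k)$ and $(\psi^k)$ live in finite sets, so convergence statements really mean ``eventually constant'' and limit points are simply values attained infinitely often. This reduces everything to understanding which strategies can be ``selected'' by $v^k$ once $v^k$ is close to $v^*$, i.e.\ to a continuity/stability argument for the $\argmax$ selection in (\ref{delta_star}) and for the threshold rule defining $I^{k+1}$.

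For \emph{(i)}: I would first note that $Mv = \max_{\delta\in Z}\{B(\delta)v-c(\delta)\}$ is a pointwise maximum of finitely many affine (hence continuous) maps $\mathbb R^\grid\to\mathbb R^\grid$, so $Mv^k\to Mv^*$ is immediate from $v^k\to v^*$; similarly $Lv^k+f\to Lv^*+f$. Next, for a fixed grid point $x\in(\partial I^*)^c\cap\grid_{<0}$ we have the strict inequality $(Lv^*+f)(x)\neq (Mv^*-v^*)(x)$; by continuity, for $k$ large the same strict inequality holds with the same sign for $v^k$, so $\psi^k(x)=\psi^*(x)$ eventually. On $\grid_{\geq 0}$ both $\psi^k$ and $\psi^*$ vanish by construction (recall $Z(x)=\{0\}$ there and the intervention regions are contained in $\grid_{<0}$, together with Remark \ref{r:zero_impulse}), so convergence is trivial there; this gives $\psi^k\to\psi^*$ on $(\partial I^*)^c$.

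For \emph{(ii)}: let $\overline\delta$ be a limit point of $(\delta^k)$, say $\delta^{k_m}=\overline\delta$ for all $m$ along a subsequence (using finiteness of $Z$). For each $m$, $\delta^{k_m}\in\argmax_{\delta\in Z}\{B(\delta)v^{k_m}-c(\delta)\}$ by line 2 of Algorithm \ref{sym_algo} together with the definition of \textsc{SolveImpulseControl}'s output $\delta=\delta^*(v)$; equivalently, for every $\delta\in Z$ and every grid point $x$, $(B(\overline\delta)v^{k_m}-c(\overline\delta))(x)\geq (B(\delta)v^{k_m}-c(\delta))(x)$. Passing to the limit $m\to\infty$ (both sides are affine, hence continuous, in $v$) preserves the inequality, so $\overline\delta\in\argmax_{\delta\in Z}\{B(\delta)v^*-c(\delta)\}$. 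The statement $\overline\psi=0$ on $\grid_{\geq 0}$ is, again, immediate from $I^k\subseteq\grid_{<0}$. For the $\argmax$ characterization of $\overline\psi$ on $\grid_{<0}$: along the relevant subsequence, $\psi^{k_m}(x)=1$ iff $(O_0v^{k_m})(x)\leq (O_1v^{k_m})(x)$, i.e.\ iff $x\in I^{k_m}$ as defined in Subroutine \ref{solve_one_player} line 8 (taking $\lambda=1$ without loss of generality by Remark \ref{practical_considerations}); passing to the limit, $\overline\psi(x)=1$ forces $(O_0v^*)(x)\leq (O_1v^*)(x)$ and $\overline\psi(x)=0$ forces $(O_0v^*)(x)\geq (O_1v^*)(x)$, which is exactly $\overline\psi\in\argmax_{i\in\{0,1\}}\{O_iv^*\}$ on $\grid_{<0}$ (recall the convention that the arg-maximum is the set of indices attaining the max). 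Here I must be mildly careful that the selection rule producing $\psi^{k_m}$ may pick $1$ on the boundary set $\{O_0v^*=O_1v^*\}$ inconsistently, but that is precisely why the conclusion is phrased as membership in the $\argmax$ set rather than equality to $\psi^*$ — no sharper statement is claimed, so no contradiction arises.

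For \emph{(iii)}: if $v^*$ has the (discrete) UIP, then at every $x\in\{Mv^*-v^*=0\}=I^*$ (equivalently, by Remark \ref{r:zero_impulse} and the symmetry setup, the intervention region of the player) the maximizer in $\max_{\delta\in Z}\{B(\delta)v^*-c(\delta)\}(x)$ is unique; call it $\delta^*(v^*)(x)$. By part \emph{(ii)}, every limit point $\overline\delta$ of $(\delta^k)$ satisfies $\overline\delta(x)\in\argmax_{\delta\in Z(x)}\{B(\delta(x))v^*(x)-c(\delta(x))(x)\}$, which by UIP is the singleton $\{\delta^*(v^*)(x)\}$ at each $x\in I^*$; at $x\notin I^*$ the player does not intervene, but one still needs $\delta^k(x)$ to stabilize — here I would invoke that $\delta^*(v)$ in (\ref{delta_star}) is defined as the \emph{maximum} arg-maximum, and $\max(\argmax_{\delta\in Z}\{B(\delta)v-c(\delta)\})(x)$ is upper semicontinuous-type stable off the tie set; more simply, on $x\geq 0$ we have $Z(x)=\{0\}$ so $\delta^k(x)=0=\delta^*(v^*)(x)$ trivially, and on $x<0$ with $x\notin I^*$ the value $Hv^k$ there only depends, through $H(\delta^k)v^k$ evaluated at $-x>0$, on $\delta^k(x)$; I need to argue this component also converges. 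The clean way: since $Z$ is finite, $(\delta^k)$ has finitely many limit points, and it suffices to show they all coincide — which on $I^*$ follows from UIP and on $I^{*c}$ I would argue either is irrelevant to $Hv^*$ (because $Hv = H(\delta^*)v = SB(\delta^*)Sv + g(S\delta^*)$ is applied on $-I^*$, so only entries $\delta^k(x)$ for $x\in I^*$ enter $Hv^k$ on $-I^*$), or else extend the UIP-type uniqueness; in either case $Hv^k = SB(\delta^k)Sv^k + g(S\delta^k) \to SB(\delta^*(v^*))Sv^* + g(S\delta^*(v^*)) = Hv^*$ by continuity of the affine maps and convergence of the relevant components of $\delta^k$ and of $v^k$.

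\textbf{Main obstacle.} The only genuinely delicate point is part \emph{(iii)}: ensuring $\delta^k\to\delta^*(v^*)$ as a full vector, not just on the intervention region, despite the $\argmax$ possibly being non-unique off $\{Mv^*-v^*=0\}$. The resolution is to observe that $Hv^k$ is computed only on $-I^*$ and there depends solely on the components $\{\delta^k(x): x\in I^*\}$ (via $S\delta^k$), where UIP forces uniqueness and hence convergence; so the claim $Hv^k\to Hv^*$ is robust even if some coordinates of $\delta^k$ outside $I^*$ oscillate. Everything else is a routine continuity argument powered by finiteness of $Z$ and of the family of candidate intervention regions.
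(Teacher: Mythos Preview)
Your proposal is correct and follows the same approach as the paper: finiteness of $Z$, continuity of the finitely many affine maps $v\mapsto B(\delta)v-c(\delta)$ and $v\mapsto Lv+f$, and passage to subsequences for limit points. Your treatment of (iii) is in fact more careful than the paper's one-line ``consequently'': you rightly flag that the UIP only forces uniqueness of the maximizer on $\{Mv^*=v^*\}$, so the literal claim $\delta^k\to\delta^*(v^*)$ as a full vector need not follow off that set, and your resolution---that only the components $\delta^k(x)$ with $x\in\{Mv^*=v^*\}$ enter $Hv^k$ on $-I^*$ via $S\delta^k$---is the correct way to secure $Hv^k\to Hv^*$, which is what the downstream Corollary~\ref{convergence_to_solution} actually uses. (One small slip: you write $\{Mv^*-v^*=0\}=I^*$, but in the notation of this section $I^*$ is defined via $\psi^*=\mathbbm 1_{\{Lv^*+f\leq Mv^*-v^*\}\cap\grid_{<0}}$; the two sets coincide for solutions of (\ref{dQVIs}) but not for an arbitrary limit $v^*$. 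This does not affect your argument, since the UIP set $\{Mv^*=v^*\}$ is what matters.)
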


\begin{proof}
That $Mv^k\to Mv^*$ is clear by continuity of the operators $B(\delta)$ and finiteness of $Z$. 

Let $x\in(\partial I^*)^c$ and suppose $Lv^*(x)+f(x)<Mv^*(x)-v^*(x)$ (the other case being analogous). By continuity of $L$ and $M$ there must exist some $k_0$ such that $Lv^k(x)+f(x)<Mv^k(x)-v^k(x)$ for all $k\geq k_0$, which implies $\psi^k(x)=1=\psi^*(x)$ for $k\geq k_0$.

The statement about $\overline\psi,\overline\delta$ is proved as before by considering appropriate subsequences. Consequently, if $v^*$ has the UIP, then necessarily $\delta^k\to\delta^*(v^*)$ and $Hv^k\to Hv^*$.
\end{proof}

As a corollary we can establish that, should the sequence $(v^k)$ converge, its limit must solve problem (\ref{dQVIs}). If convergence is not exact however (i.e., in finite iterations), then we will ask that $v^*$ verifies some of the properties of the Verification Theorem in Corollary \ref{coro_sym_QVIs}. Namely, the UIP and a discrete analogous of the continuity in the border of the opponent's intervention region. 
We emphasise that our main motivation in solving system (\ref{dQVIs}) relies in Corollary \ref{coro_sym_QVIs} and its framework. Additionally, in most practical situations and for fine-enough grids, one can intuitively expect the discretization of an equilibrium payoff as in Corollary \ref{coro_sym_QVIs} to inherit the UIP. Lastly, we note that the exact equality $Lv^*+f=Mv^*-v^*$ will typically not be verified for any point in the grid in practice, giving $\partial I^*=\emptyset$.

\begin{corollary}
\label{convergence_to_solution}
Assume \emph{\ref{A0}--\ref{A2}} and suppose $v^k\to v^*$. Then:
\begin{enumerate}[label=(\roman*)]
\item If the convergence is exact, then $v^*$ solves the system of QVIs (\ref{dQVIs}).
\item If $v^*$ has the UIP and $Lv^*+f=Hv^*-v^*$ on $-\partial I^*$, then $v^*$ solves (\ref{dQVIs}).
\end{enumerate}
\end{corollary}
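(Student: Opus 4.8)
The plan is to leverage the algebraic reformulation of the discrete system already developed, namely Propositions \ref{equivalent_dQVIs1} and \ref{equivalent_dQVIs2}, together with the convergence-of-strategies Lemma \ref{convergence_strategies}. The key is that the iteration (\ref{FPPI-like}), $\mathbb A(\varphi^k,\varphi^{k+1})v^{k+1}=\mathbb B(\varphi^k)v^k+\mathbb C(\varphi^k,\varphi^{k+1})$, holds for all $k$, so passing to the limit should produce the fixed-point equation in part \textit{(ii)} of Proposition \ref{equivalent_dQVIs1}, which is equivalent to $v^*$ solving (\ref{dQVIs}). The subtlety is that the coefficient matrices $\mathbb A,\mathbb B,\mathbb C$ depend on $\varphi^k,\varphi^{k+1}$, which need not converge (only their limit points are controlled, via Lemma \ref{convergence_strategies}), so the argument in the two cases will differ in how the strategy dependence is handled.

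For part \textit{(i)}: if the convergence is exact, there exists $K$ with $v^k=v^*$ for all $k\geq K$. Then for $k=K$, Algorithm \ref{sym_algo} constructs $\varphi^{K}=(\psi^K,\delta^K)$ from $v^K=v^*$ exactly via the rules in Proposition \ref{FPPI-like_result}\textit{(i)}, so I would check that $\delta^K$ may be taken to be $\delta^*(v^*)=\max(\argmax_{\delta\in Z}\{B(\delta)v^*-c(\delta)\})$ and $\psi^K=\psi^*$ up to the border $\partial I^*$; the point is that on $\partial I^*$ the choice of $\psi$ does not affect the equation, since there $Lv^*+f=Mv^*-v^*$, so both branches of the $\max$ agree and $v^{k+1/2}$, $v^{k+1}$ are unaffected. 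Hence $\varphi^K$ and $\varphi^{K+1}$ can both be identified with $\varphi^*$ for the purposes of (\ref{FPPI-like}), giving $\mathbb A(\varphi^*,\varphi^*)v^*=\mathbb B(\varphi^*)v^*+\mathbb C(\varphi^*,\varphi^*)$, which is exactly Proposition \ref{equivalent_dQVIs1}\textit{(ii)}, whence \textit{(i)} of Proposition \ref{equivalent_dQVIs1} gives the claim. In fact an even simpler route is available: with $v^k=v^*$ for $k\geq K$, line 4 of Algorithm \ref{sym_algo} sets $v^{K+1/2}$ and line 5 calls \textsc{SolveImpulseControl} which outputs $v^{K+1}=v^*$; by Theorems \ref{solution_constrained_QVI_1}/\ref{solution_constrained_QVI_2} this means $v^*_{D}$ solves the restricted QVI (\ref{restricted_QVI}) for $D=(-I^K)^c$, and reading off the equations on $-I^K$ (first line of (\ref{dQVIs}), coming from $v^{K+1/2}=H(\delta^K)v^K$ there) and on $(-I^K)^c$ (the constrained QVI) gives precisely (\ref{dQVIs}) once one verifies $-I^K$ coincides with the intervention region $-\{Mv^*-v^*=0\}$ determined by $v^*$ off the measure-zero border; Remark \ref{r:zero_impulse} and the symmetry constraints ensure the matching of regions and that the $\max$ in (\ref{dQVIs}) is governed correctly.

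For part \textit{(ii)}: here $v^k\to v^*$ but not necessarily exactly. I would take a subsequence along which $(\psi^k,\delta^k)$ and $(\psi^{k+1},\delta^{k+1})$ both converge, say to $(\overline\psi,\overline\delta)$ and $(\overline\psi',\overline\delta')$. By Lemma \ref{convergence_strategies}\textit{(ii)} these limit points are maximizers: $\overline\delta,\overline\delta'\in\argmax_{\delta\in Z}\{B(\delta)v^*-c(\delta)\}$, $\overline\psi,\overline\psi'$ vanish on $\grid_{\geq 0}$ and lie in $\argmax_{i}\{O_i v^*\}$ on $\grid_{<0}$; moreover by Lemma \ref{convergence_strategies}\textit{(i)} they agree with $\psi^*$ off $\partial I^*$, and by the UIP hypothesis (Lemma \ref{convergence_strategies}\textit{(iii)}) $\delta^k\to\delta^*(v^*)$, so $\overline\delta=\overline\delta'=\delta^*(v^*)$ and $Hv^k\to Hv^*$. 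Taking limits in (\ref{FPPI-like}) along the subsequence, using continuity of $\varphi\mapsto\mathbb A,\mathbb B,\mathbb C$ and of matrix inversion (the $\mathbb A$'s are uniformly nonsingular $M$-matrices, by Proposition \ref{FPPI-like_result}\textit{(ii)} and finiteness of $\Phi$), yields $\mathbb A(\overline\varphi,\overline\varphi')v^*=\mathbb B(\overline\varphi)v^*+\mathbb C(\overline\varphi,\overline\varphi')$ for the limiting strategies. The remaining work is to show this forces $v^*$ to solve (\ref{dQVIs}): on $\grid_{\geq 0}$ everything reduces to the single-player QVI and the $M$-matrix structure pins $v^*$ down; on $-I^*$ one needs the equation $Hv^*-v^*=0$, and this is exactly where the hypothesis $Lv^*+f=Hv^*-v^*$ on $-\partial I^*$ enters, to reconcile the two possible values of $\overline\psi$ (the $\argmax$ is non-unique precisely on $\partial I^*$) and guarantee that whichever limiting intervention region is selected, the resulting linear system coincides with the one of Proposition \ref{equivalent_dQVIs1}\textit{(ii)}. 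I would spell this out by checking that on $\grid_{<0}$, for $x\notin\partial I^*$ the two cases ($x\in I^*$ or $x\in C^*$) give the correct row of (\ref{dQVIs}) automatically from $\overline\psi(x)=\psi^*(x)$, while for $x\in\partial I^*$ the added continuity condition makes both rows equivalent, so the full system (\ref{dQVIs}) holds. The main obstacle is exactly this last reconciliation on $\partial I^*$: the strategies need not converge there, so one must argue that the border hypothesis renders the ambiguity harmless — everything else is continuity and the already-established $M$-matrix bookkeeping.
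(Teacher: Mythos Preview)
Your approach is essentially the paper's: for \textit{(ii)}, both pass to a subsequence along which $(\psi^k,\psi^{k+1})$ converges, use Lemma \ref{convergence_strategies} and the UIP to identify the limiting impulses with $\delta^*(v^*)$ and the limiting indicator functions with $\psi^*$ off $\partial I^*$, take limits in (\ref{FPPI-like}), and then handle $\partial I^*\cup(-\partial I^*)$ separately.

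One slip to correct in your final ``spelling out'': you say that for $x\in\partial I^*$ ``the added continuity condition makes both rows equivalent,'' but the roles are reversed. For $x\in\partial I^*\subseteq\grid_{<0}$ the ambiguity is in $\overline\psi(x)$, and the two resulting equations ($Mv^*(x)-v^*(x)=0$ versus $Lv^*(x)+f(x)=0$) agree \emph{by the definition} of $\partial I^*$, not by the hypothesis. It is for $x\in-\partial I^*\subseteq\grid_{>0}$, where the ambiguity is in $\psi(-x)$ with $-x\in\partial I^*$, that the hypothesis $Lv^*+f=Hv^*-v^*$ on $-\partial I^*$ is invoked to reconcile $Hv^*(x)-v^*(x)=0$ with $Lv^*(x)+f(x)=0$. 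This is exactly what the paper checks in its last paragraph (distinguishing $\psi(-x)=1$ from $\psi(-x)=0$). For part \textit{(i)} the paper is terser than you: exact convergence gives $\psi^K=\psi^*$ and $\delta^K=\delta^*(v^*)$ exactly (not merely ``up to the border''), so the result is declared immediate from Algorithm \ref{sym_algo}.
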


\begin{proof}
\textit{(i)} is immediate from the definition of Algorithm \ref{sym_algo}.

In the general case, since $\{0,1\}^\grid$ is finite, there is a subsequence of $\big(\psi^k,\psi^{k+1}\big)$ that converges to some pair $(\psi,\overline\psi)$. Passing to such subsequence, by Lemma \ref{convergence_strategies}, the UIP of $v^*$ and equation (\ref{FPPI-like}), we get that $v^*$ solves the system $\mathbb A\big(\varphi,\overline\varphi\big)v^*=\mathbb B\big(\varphi\big)v^* +\mathbb C\big(\varphi,\overline\varphi\big)$ for $\varphi=\big(\psi,\delta^*(v^*)\big),\overline\varphi=\big(\overline\psi,\delta^*(v^*)\big)$ and $\psi,\overline\psi$ coincide with $\psi^*$ except possibly on $\partial I^*$. Thus, it only remains to show that $v^*$ also solves the equations of the system (\ref{dQVIs}) for any $x\in\partial I^*\cup(-\partial I^*)$. 

For $x\in\partial I^*$, the previous is true by definition. Suppose now $x\in-\partial I^*\subseteq {\overline I}^c$. We have $\psi^*(-x)=1$. If $\psi(-x)=1$, there is nothing to prove. If $\psi(-x)=0$, then $x\in{\overline I}^c\cap(-I)^c$ and $0=Lv^*(x)+f(x)=Hv^*(x)-v^*(x)$, 
where the last equality holds true by assumption.
\end{proof}

Lemma \ref{convergence_strategies} shows to what extent the convergence of the payoffs imply the convergence of the strategies. The following theorem, of theoretical interest, establishes a reciprocal under the stronger assumption \ref{A0'}. In general, since the set of strategies $\Phi$ is finite, the sequence of strategy-dependent coefficients of the fixed-point equations (\ref{FPPI-like}) will always be bounded and with finitely many limit points. However, if the approximating strategies are such that the former coefficients convergence, then Algorithm \ref{sym_algo} is guaranteed to converge. Further, instead of looking at the convergence of $\big(\mathbb A,\mathbb B,\mathbb C\big)\big(\varphi^k,\varphi^{k+1}\big)$, we can instead consider the weaker condition of $\big(\mathbb A^{-1}\mathbb B,\mathbb A^{-1}\mathbb C\big)\big(\varphi^k,\varphi^{k+1}\big)$ converging.

\begin{theorem}
\label{convergence_payoffs}
Assume \emph{\ref{A0'},\ref{A1},\ref{A2}}. If $\big(\mathbb A^{-1}\mathbb B\big(\varphi^k,\varphi^{k+1}\big)\big)$ and $\big(\mathbb A^{-1}\mathbb C\big(\varphi^k,\varphi^{k+1}\big) \big)$ converge, then $(v^k)$ converges.
\end{theorem}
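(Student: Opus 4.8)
The recurrence (\ref{FPPI-like}) is the whole engine of the proof. Since every $\mathbb A(\varphi^k,\varphi^{k+1})$ is a nonsingular M-matrix (Proposition \ref{FPPI-like_result}(ii)), I would first rewrite it as $v^{k+1}=P_k v^k+q_k$, where $P_k\defeq\mathbb A^{-1}\mathbb B\big(\varphi^k,\varphi^{k+1}\big)$ and $q_k\defeq\mathbb A^{-1}\mathbb C\big(\varphi^k,\varphi^{k+1}\big)$ are exactly the quantities assumed to converge, say $P_k\to P$ and $q_k\to q$. The decisive observation is that this hypothesis actually forces the coefficients to be \emph{eventually constant}: because $\Phi$ is finite, the pairs $(\varphi^k,\varphi^{k+1})$ take finitely many values, so $(P_k)$ ranges over a finite --- hence discrete and closed --- subset of $\mathbb R^{\grid\times\grid}$, and a convergent sequence in such a set is eventually constant; the same holds for $(q_k)$. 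Thus there are an index $k_0$, a strategy pair $(\varphi,\overline\varphi)\in\Phi\times\Phi$, and matrices $P=\mathbb A^{-1}\mathbb B(\varphi,\overline\varphi)$, $q=\mathbb A^{-1}\mathbb C(\varphi,\overline\varphi)$ with $v^{k+1}=Pv^k+q$ for all $k\ge k_0$.

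The next step is to show $\rho(P)<1$. By Lemma \ref{coefficients_properties}, $P$ is substochastic with $\widehat{\mathrm{con}}[P]\le\mathrm{con}\big[(\mathbb A-\mathbb B)(\varphi,\overline\varphi)\big]$; under \emph{\ref{A0'}},\emph{\ref{A1}} the matrix $(\mathbb A-\mathbb B)(\varphi,\overline\varphi)$ is a WCDD $\mathrm{L}_0$-matrix (Remark \ref{r:interpretation2}), so its index of connectivity is finite, whence $\widehat{\mathrm{con}}[P]<\infty$ and Theorem \ref{contraction} gives $\rho(P)<1$. This is the one place where the strengthened assumption \emph{\ref{A0'}} is genuinely needed: with only \emph{\ref{A0}} the matrix $(\mathbb A-\mathbb B)(\varphi,\overline\varphi)$ need only be WDD, $\rho(P)=1$ is not excluded, and the constant-coefficient recurrence could then fail to converge. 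Since $\rho(P)<1$, the matrix $\operatorname{Id}-P$ is nonsingular; setting $v^*\defeq(\operatorname{Id}-P)^{-1}q$ one has $v^{k+1}-v^*=P(v^k-v^*)$ for $k\ge k_0$, hence $v^{k}-v^*=P^{\,k-k_0}(v^{k_0}-v^*)\to0$ because $\rho(P)<1$ implies $P^n\to0$. Therefore $v^k\to v^*$, and restarting (\ref{sequence_representation}) at $k_0$ while summing the Neumann series identifies $v^*=\sum_{n\ge0}(\mathbb A^{-1}\mathbb B)^n\,\mathbb A^{-1}\mathbb C(\varphi,\overline\varphi)$ (cf. Proposition \ref{equivalent_dQVIs2}).

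I do not anticipate any real obstacle: once the eventual-constancy reduction is noticed, the rest is bookkeeping with results already established (Lemma \ref{coefficients_properties}, Remark \ref{r:interpretation2}, Theorem \ref{contraction}), and the only subtlety is to read the hypothesis as convergence of the composite matrices $\mathbb A^{-1}\mathbb B$ and $\mathbb A^{-1}\mathbb C$ --- in the notation of Lemma \ref{coefficients_properties} and (\ref{sequence_representation}) --- rather than of $\mathbb A,\mathbb B,\mathbb C$ separately. If one wished to sidestep the eventual-constancy argument, a slightly more robust route is available: the finitely many possible values of $P_k$ are all of the form $\mathbb A^{-1}\mathbb B(\varphi,\overline\varphi)$, so $\sup_k\rho(P_k)<1$; picking a vector norm whose induced operator norm satisfies $\|P\|<1$, one obtains $\|P_k\|\le\gamma<1$ for all large $k$ from $P_k\to P$, and then the elementary estimate $a_{k+1}\le\gamma a_k+b_k$ with $a_k=\|v^k-v^*\|$ and $b_k\to0$ (collecting the $\|P_k-P\|$ and $\|q_k-q\|$ terms) again yields $v^k\to v^*$.
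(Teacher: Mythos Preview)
Your argument is correct and follows the paper's own proof essentially line for line: the same eventual-constancy reduction via finiteness of $\Phi$, the same appeal to Remark \ref{r:interpretation2}, Lemma \ref{coefficients_properties} and Theorem \ref{contraction} to get $\rho(P)<1$, and the same constant-coefficient contractive fixed-point conclusion. Your added remarks (the Neumann-series identification of $v^*$ and the alternative route via a uniform spectral bound) are sound elaborations but not needed for the theorem as stated.
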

\begin{proof}
Set $b=\lim_k \mathbb A^{-1}\mathbb C\big(\varphi^k,\varphi^{k+1}\big)$. Since $\Phi$ is finite, there must exist $k_0\in\mathbb N$ and $\varphi,\overline\varphi\in\Phi$ such that $\mathbb A^{-1}\mathbb B\big(\varphi^k,\varphi^{k+1}\big)=\mathbb A^{-1}\mathbb B\big(\varphi,\overline\varphi\big)$ and $\mathbb A^{-1}\mathbb C\big(\varphi^k,\varphi^{k+1}\big)=b$ for all $k\geq k_0$. Moreover, under our assumptions, $(\mathbb A-\mathbb B)\big(\varphi,\overline\varphi\big)$ is a WCDD $\mbox{L}_0$-matrix. Then Lemma \ref{coefficients_properties} and Theorem \ref{contraction} imply that $\mathbb A^{-1}\mathbb B\big(\varphi,\overline\varphi\big)$ is contractive for some matrix norm. Lastly, note that the sequence of payoffs $(v^k)_{k\geq k_0}$ now satisfies the classical (constant-coefficients) contractive fixed-point recurrence $v^{k+1}= \mathbb A^{-1}\mathbb B\big(\varphi,\overline\varphi\big)v^k + b$, which converges to the unique fixed-point of the equation.
\end{proof}

The classical fixed-point policy-iteration framework \cite{HFL1,Cl} assumes uniform contractiveness in $\|\cdot\|_\infty$ of the sequence of operators. This is a natural norm to consider in a context where matrices have properties defined row by row, such as diagonal dominance.\footnote{Recall that this norm can be computed as the maximum absolute value row sum.} However, the authors mention convergence in experiments where only $\|\cdot\|_\infty$-nonexpansiveness held true. The latter is the typical case in our context, for the matrices $\mathbb A^{-1}\mathbb B\big(\varphi^k,\varphi^{k+1}\big)$, which is why Theorem \ref{convergence_payoffs} relies on the fact that a spectral radius strictly smaller than one guarantees contractiveness in some matrix norm. 

It is natural to ask whether there is some contractiveness condition that may account for the observations in \cite{HFL1,Cl} and that can be generalized to our context to further the study of Algorithm \ref{sym_algo}. Imposing a uniform bound on the spectral radii would not only be hard to check, but also difficult to manipulate, as the spectral radius is not sub-multiplicative.\footnote{$\rho(AB)\leq \rho(A)\rho(B)$ does not hold in general when the matrices $A$ and $B$ do not commute.} Instead, we can consider the sequential indices of contraction and connectivity, which naturally generalize those of the previous section by means of walks in the graph of a sequence of matrices (see Appendix \ref{appendix:matrices} for more details). As before, they can be identified with one another (see Lemma \ref{substochastic_WDD_link}) and, given substochastic matrices, the sequential index of contraction tells us how many we need to multiply before the result becomes $\|\cdot\|_\infty$-contractive (Theorem \ref{sequence_contraction}). Thus, let us consider a uniform bound on the following sequential indices of connectivity: 

\begin{enumerate}[label=(A\arabic*''),start=0]
\item \label{A0''} 
There exists $m\in\mathbb N_0$ such that for any sequence of strategies $(\overline\varphi^k)\subseteq\Phi$, 
$$
\mbox{con}\left[\Big((\mathbb A-\mathbb B)\big(\overline\varphi^k,\overline\varphi^{k+1}\big)\Big)\right]\leq m.
$$
\end{enumerate} 

\begin{remark}
Given $\varphi,\overline\varphi\in\Phi$, by considering the sequence $\varphi,\overline\varphi,\varphi,\overline\varphi,\dots$, we see that \ref{A0''} implies \ref{A0'}. In fact, \ref{A0''} can be interpreted as precluding infinite simultaneous impulses even when the players can adapt their strategies (cf. Remark \ref{r:interpretation2}) and imposing that the number of shifts needed for any state to reach the common continuation region is bounded.
\end{remark}

Under this stronger assumption, we have:
\begin{proposition}
\label{bounded_iterates}
Assume \emph{\ref{A0''},\ref{A1},\ref{A2}}. Then $(v^k)$ is bounded.
\end{proposition}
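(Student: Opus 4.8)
The plan is to iterate the recursion (\ref{FPPI-like}) as in (\ref{sequence_representation}) and bound the two resulting terms uniformly in $k$, using that under \ref{A0''} every product of consecutive coefficient matrices of length $m$ is $\|\cdot\|_\infty$-contractive with a uniform contraction factor. First I would record, from Lemma \ref{coefficients_properties} together with \ref{A0''}, that for any $\varphi,\overline\varphi\in\Phi$ the matrix $(\mathbb A-\mathbb B)(\varphi,\overline\varphi)$ is a WCDD $\mbox{L}_0$-matrix with index of connectivity at most $m$, hence (Lemma \ref{coefficients_properties}) $\widehat{\mbox{con}}[\mathbb A^{-1}\mathbb B(\varphi,\overline\varphi)]\le m$. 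Then, invoking the sequential version (the one used implicitly via Theorem \ref{sequence_contraction} and Appendix \ref{appendix:matrices}), for \emph{any} sequence $(\overline\varphi^k)\subseteq\Phi$ the sequential index of contraction of $\big(\mathbb A^{-1}\mathbb B(\overline\varphi^k,\overline\varphi^{k+1})\big)$ is at most $m$, which means any product of $m$ consecutive such matrices has $\|\cdot\|_\infty\le\gamma<1$ for some $\gamma$ depending only on $m$ and the (finitely many) matrices involved. The key point making $\gamma$ uniform is precisely that $\Phi$ is finite, so only finitely many matrices $\mathbb A^{-1}\mathbb B(\varphi,\overline\varphi)$ occur, and one takes the maximum of the finitely many $m$-fold-product norms.

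Next I would turn to (\ref{sequence_representation}). Write $P_k^n\defeq\prod_{j=k}^{n+1}\mathbb A^{-1}\mathbb B(\varphi^j,\varphi^{j+1})$ (and $P_k^{-1}$ the full product down to $j=0$). Grouping the factors of each $P_k^n$ into blocks of $m$ consecutive matrices and bounding each complete block by $\gamma$ in $\|\cdot\|_\infty$, while bounding each of the at most $m-1$ leftover factors by $1$ (every $\mathbb A^{-1}\mathbb B$ is substochastic, hence $\|\cdot\|_\infty$-nonexpansive, by Lemma \ref{coefficients_properties}), gives
\begin{equation*}
\big\|P_k^n\big\|_\infty\le \gamma^{\lfloor (k-n)/m\rfloor}\le \gamma^{-1}\big(\gamma^{1/m}\big)^{k-n}.
\end{equation*}
Since $\mathbb C(\varphi,\overline\varphi)$ ranges over a finite set as well (again $\Phi$ finite), set $C\defeq\max_{\varphi,\overline\varphi\in\Phi}\big\|\mathbb A^{-1}\mathbb C(\varphi,\overline\varphi)\big\|_\infty<\infty$. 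Then from (\ref{sequence_representation}),
\begin{equation*}
\big\|v^{k+1}\big\|_\infty\le \big\|P_k^{-1}\big\|_\infty\,\big\|v^0\big\|_\infty + \sum_{n=0}^k\big\|P_k^n\big\|_\infty\,C\le \gamma^{-1}\big(\gamma^{1/m}\big)^{k+1}\big\|v^0\big\|_\infty+\gamma^{-1}C\sum_{n=0}^k\big(\gamma^{1/m}\big)^{k-n},
\end{equation*}
and the geometric sum is bounded by $\gamma^{-1}C/(1-\gamma^{1/m})$, a constant independent of $k$. Hence $\|v^k\|_\infty$ is bounded uniformly, which is the claim.

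The main obstacle is the passage from the \emph{single-pair} statement $\widehat{\mbox{con}}[\mathbb A^{-1}\mathbb B(\varphi,\overline\varphi)]\le m$ to the \emph{sequential} bound on $\widehat{\mbox{con}}$ of an arbitrary sequence $\big(\mathbb A^{-1}\mathbb B(\overline\varphi^k,\overline\varphi^{k+1})\big)$, and then to a uniform $m$-fold $\|\cdot\|_\infty$-contraction factor $\gamma$. This is exactly what Assumption \ref{A0''} is designed to supply on the connectivity side: it bounds $\mbox{con}$ of the sequence $\big((\mathbb A-\mathbb B)(\overline\varphi^k,\overline\varphi^{k+1})\big)$, and the sequential analogue of Lemma \ref{coefficients_properties} (the link between sequential indices of connectivity and of contraction, Lemma \ref{substochastic_WDD_link}, together with Theorem \ref{sequence_contraction}) transfers this to the desired uniform contraction of $m$-fold products of $\mathbb A^{-1}\mathbb B$'s. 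Care is needed that the graph of $\overline\Psi B(\overline\delta)+S\Psi B(\delta)S$ and that of $\operatorname{Id}-(\overline\Psi B(\overline\delta)+S\Psi B(\delta)S)$ differ only by self-loops (so walks are preserved), exactly as in Remark \ref{r:interpretation2}; once that is in hand the rest is the routine geometric-series estimate sketched above. Finiteness of $\Phi$ is used twice and is essential: to get a single $\gamma<1$ and to get a single constant $C$.
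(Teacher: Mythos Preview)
Your argument is essentially the paper's own: pass from \ref{A0''} to a uniform bound on the sequential index of contraction of $\big(\mathbb A^{-1}\mathbb B(\varphi^k,\varphi^{k+1})\big)$ (the paper does this ``in a similar way to Lemma \ref{coefficients_properties}''), use finiteness of $\Phi$ to extract a uniform contraction constant for long enough products, and then bound the representation (\ref{sequence_representation}) by a geometric series. One small slip: by Theorem \ref{sequence_contraction}, a sequential index of contraction $\le m$ gives $\|\cdot\|_\infty$-contractiveness only after $m+1$ factors, not $m$, so your block size and the exponent $\lfloor(k-n)/m\rfloor$ should read $m+1$ throughout (exactly as in the paper's bound $C_1^{[(k-n)/(m+1)]}$); this does not affect the argument's validity.
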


\begin{proof}
In a similar way to Lemma \ref{coefficients_properties}, one can check that under \ref{A0''},\ref{A1},\ref{A2} we have the following uniform bound for the sequential indices of contraction:
$$
\widehat{\mbox{con}}\left[\Big(\mathbb A^{-1}\mathbb B\big(\varphi^k,\varphi^{k+1}\big)\Big)\right]\leq m,
$$
for any sequence of strategies $(\varphi^k)\subseteq\Phi$. In other words, not only is any product of the previous substochastic matrices also substochastic (i.e., $\|\cdot\|_\infty$-nonexpansive), but it is also $\|.\|_{\infty}$-contractive when there are at least $m+1$ factors. Furthermore, since $\Phi^{m+1}$ is finite, there must be a uniform contraction constant $C_1<1$. Let $C_2>0$ be a uniform bound for $\mathbb A^{-1}\mathbb C$. By the representation (\ref{sequence_representation})
\begin{equation*}
\|v^{k+1}\|_\infty\leq \|v^0\|_\infty + C_2\sum_{n=0}^k C_1^{\big[\frac{k-n}{m+1}\big]}
\leq\|v^0\|_\infty + (m+1)C_2\sum_{n=0}^\infty C_1^n<+\infty.\footnote{For any $x\in\mathbb R$, $[x]$ denotes its integer part.}
\end{equation*}
\end{proof}

Given $n_0\in\mathbb N$ and $k>(m+1)n_0$, the same argument of the previous proof shows that one can decompose $(v^k)$ as 
$
v^{k+1}=u^k + F(\varphi^{k-(m+1)n_0},\dots,\varphi^k) + w^k,
$
 for a fixed function $F$, $\|u^k\|_\infty\leq C_1^{[k/(m+1)]}\|v^0\|\to 0$ and $\|w^k\|_\infty\leq (m+1)C_2\sum_{n=n_0}^\infty C_1^n$. The latter is small if $n_0$ is large. Hence, one could heuristically expect that the trailing strategies are often the ones dominating the convergence of the algorithm. In fact, in all the experiments carried out with a discretization satisfying \ref{A0''},\ref{A1},\ref{A2}, a dichotomous behaviour was observed: the algorithm either converged or at some point reached a cycle between a few payoffs. In the latter case, and restricting attention to instances in which one heuristically expects a solution to exist (more details in Section \ref{s:numerics}), it was possible to reduce the residual to the QVIs and the distance between the iterates by refining the grid. 

The previous motivates the study of Algorithm \ref{sym_algo} when the grid is sequentially refined, instead of fixed. Such an analysis however, would likely entail the need of a viscosity solutions framework as in \cite{ABL,BS}, which does not currently exist in the literature of nonzero-sum stochastic impulse games. 
Consequently, this analysis and the stronger convergence results that may come out of it are inevitably outside the scope of this paper. 

\subsection{Discretization schemes}
\label{s:discretization}
Let us conclude this section by showing how one can discretize the symmetric system of QVIs (\ref{sym_QVIs}) to obtain (\ref{dQVIs}) in a way that satisfies the assumptions present throughout the paper. Recall that we work on a given symmetric grid $\grid:\ x_{-N}=-x_N<\dots<x_{-1}=-x_1<x_0=0<x_1<\dots<x_N$.

Firstly, we want a discretization $L$ of the operator $\mathcal A-\rho \operatorname{Id}$ such that $-L$ is an SDD $L_0$-matrix as per \ref{A1}. A standard way to do this is to approximate the first (resp. second) order derivatives with forward and backward (resp. central) differences in such a way that we approximate the ordinary differential equation (ODE) $\frac{1}{2}\sigma^2 V''+\mu V' - \rho V + f=0$ 
with an \textit{upwind} (or \textit{positive-coefficients}) scheme. More precisely, for each $x=x_i\in\grid$ we approximate the first derivative with a forward (resp. backward) difference if its coefficient in the previous equation is nonegative (resp. negative) in $x_i$, 
\begin{equation*}
V'(x_i)\approx\frac{V(x_{i+1})-V(x_i)}{x_{i+1}-x_i}\quad\mbox{ if }\mu(x_i)\geq 0\quad\mbox{ and }\quad V'(x_i)\approx\frac{V(x_i)-V(x_{i+1})}{x_i-x_{i+1}}\quad\mbox{ if }\mu(x_i)<0					
\end{equation*}
and the second derivative by
$$
V''(x_i)\approx\frac{V(x_{i+1})-V(x_i)}{(x_{i+1}-x_i)(x_{i+1}-x_{i-1})}-\frac{V(x_i)-V(x_{i-1})}{(x_i-x_{i-1})(x_{i+1}-x_{i-1})}.						
$$
In the case of an equispaced grid with step size $h$, this reduces to 
\begin{equation*}
V'(x)\approx\frac{V\big(x+\sgn(\mu(x))h\big)-V(x)}{\sgn(\mu(x))h}					
\quad\mbox{ and }\quad		
V''(x)\approx\frac{V(x+h)-2V(x)+V(x-h)}{h^2}.\footnote{$\sgn$ denotes the sign function, $\sgn(x)=1$ if $x\geq 0$ and $-1$ otherwise.}						
\end{equation*}
For the previous stencils to be defined in the extreme points of the grid, we consider two additional points $x_{-N-1},x_{N+1}$ and replace $V(x_{-N-1}),V(x_{N+1})$ in the previous formulas by some values resulting from artificial boundary conditions. A common choice is to impose Neumann conditions to solve for $V(x_{-N-1}),V(x_{N+1})$ using the first order differences from before. For example, in the equispaced grid case, 
given $\mbox{LBC},\mbox{RBC}\in\mathbb R$ we solve for $V(x_{-N}-h)$ (resp. $V(x_N+h)$) from the Neumann condition 
$$\mbox{LBC}=V'\left(x_{-N}-h\mathbbm 1_{\big\{\mu(x_{-N})\geq 0\big\}}\right)\quad \mbox{(resp. } \mbox{RBC}=V'\left(x_N+h\mathbbm 1_{\big\{\mu(x_N)< 0\big\}}\right)),$$
yielding $V(x_{-N}-h)\approx V(x_{-N})-\mbox{LBC}h$ (resp. $V(x_N+h)\approx V(x_N)+ \mbox{RBC} h$). 
The choice of $\mbox{LBC},\mbox{RBC}$ is problem-specific and intrinsically linked to that of $x_N$, although it does not affect the properties of the discrete operators. See more details in Section \ref{s:numerics}.

The described procedure leads to a discretization of the ODE as $Lv+f=0$, with $L$ satisfying the properties we wanted (with \emph{strict} diagonal dominance due to $\rho>0$). Note that the values of $f$ at $x_{-N},x_N$ need to be modified to account for the boundary conditions. 

\begin{remark}
One could increase the overall order of approximation by using central differences as much as possible for the first order derivatives, provided the scheme remains upwind (see \cite{FL, WF} for more details). This is not done here in order to simplify the presentation. 
\end{remark}

We now approximate the impulse constraint sets $\mathcal Z(x)$ ($x\in\mathbb R$) by finite sets $\emptyset\neq Z(x)\subseteq[0,+\infty)$ ($x\in\grid$), such that $Z(x)=\{0\}$ if $x\geq 0$, and define the impulse operators 
$$B(\delta)v(x) = v[\![x+\delta(x)]\!],\mbox{ for }v\in\mathbb R^\grid,\ \delta\in Z,\ x\in\grid,$$
where $v[\![y]\!]$ denotes linear interpolation of $v$ on $y$ using the closest nodes on the grid, and $v[\![y]\!]=v(x_{\pm N})$ if $\pm y>\pm x_{\pm N}$ (i.e., ``no extrapolation"). This univocally defines the discrete loss and gain operators $M$ and $H$ as per (\ref{discrete_intervention_operators}), as well as the optimal impulse $\delta^*$ according to (\ref{delta_star}). The set of discrete strategies $\Phi$ is defined as in (\ref{discrete_strategies}). 

This general discretization scheme satisfies assumptions \ref{A0}--\ref{A2} and one can impose some regularity conditions on the sets $\mathcal Z(x)$ and $Z(x)$ such that the solutions of the discrete QVI problems (\ref{constrained_QVI}) converge locally uniformly to the unique viscosity solution of the analytical impulse control problem, as the grid is refined.\footnote{Additional technical conditions include costs bounded away from zero and a comparison principle for the analytical QVI.} See \cite{A0,ABL} for more details.

\begin{example}
\label{ex:arbitrary_impulses}
In the case where $\mathcal Z(x)=[0,+\infty)$ for $x<0$, a natural and most simple choice for $Z(x)$ is $Z(x_i)=\{0,x_{i+1}-x_i,\dots,x_N-x_i\}$ for $i<0$. In this case, $B(\delta)v(x) = v(x+\delta(x))$ and $Hv(x)=v(x-\delta^*(-x))+g(x,-\delta^*(-x))$. This choice, however, does not satisfy \ref{A0'}. 
\end{example}

In order to preclude infinite simultaneous interventions it is enough to constrain the size of the impulses so that the symmetric point of the grid cannot be reached. That is, $Z(x)\subseteq[0,-2x)$ for any $x\in\grid_{<0}$. In this case, the scheme satisfies the stronger conditions \ref{A0''},\ref{A1},\ref{A2} (and in particular, \ref{A0'}). Intuitively, each positive impulse will lead to a state which is at least one node closer to $x_0=0$, where no player intervenes. Practically, it makes sense to make this choice when one suspects (or wants to check whether) there is a symmetric NE with no ``far-reaching impulses", in the previous sense.

\begin{proof}
To check that \ref{A0''} is indeed satisfied, consider some arbitrary $\varphi=(I,\delta),\ \overline\varphi=(\overline I,\overline\delta)$ and $x_i\in\grid$. Note first that if $x_i$ belongs to the common continuation region ${\overline I}^c\cap (-I)^c$ (as it is the case for $x_0=0$), then the $i$-th row of $(\mathbb A-\mathbb B)\big(\varphi,\overline\varphi\big)$ is equal to the $i$-th row of $-L$, which is SDD by \ref{A1}. We claim that, in general, either the $i$-th row of $(\mathbb A-\mathbb B)\big(\varphi,\overline\varphi\big)$ is SDD or there is an edge in its graph from $x_i$ to some $x_j$ with $|j|<|i|$. This immediately implies that (in the notation of \ref{A0''}) one can take $m=N$, as there will always be a walk in graph$\Big((\mathbb A-\mathbb B)\big(\overline\varphi^k,\overline\varphi^{k+1}\big)\Big)$ from $x_i$ to some SDD row, with length at most $N$. (The longest that might need to be walked is all the way until reaching the 0-th row, with the distance decreasing in one unit per step.) 
 
To prove the claim, suppose first that $x_i\in\overline I$ (i.e., the intervention region of the player). Hence, $i<0$. If $\overline\delta=0$, then the $i$-th row of $(\mathbb A-\mathbb B)\big(\varphi,\overline\varphi\big)$ is again equal to the $i$-th row of $\operatorname{Id}$, thus SDD. If $0<\overline\delta<-2x_i$, then there exist an index $k$ and $\alpha,\beta\geq 0$ such that $i< k\leq |i|$, $\alpha+\beta=1$ and $x_i+\overline\delta=\alpha x_{k-1} +\beta x_k$. Accordingly, 
$$\left[(\mathbb A-\mathbb B)\big(\varphi,\overline\varphi\big)\right]_{ij} = \mathbbm 1_{\{i=j\}} - \alpha\mathbbm 1_{\{j=k-1\}} - \beta\mathbbm 1_{\{j=k\}},\quad\mbox{for all }j.$$
If $x_i<x_i+\overline\delta\leq x_{|i|-1}$, then we can assume without loss of generality that $\beta>0$ and take $j=k$. (If $\beta=0$, simply relabel $k-1$ as $k$.) If $x_{|i|-1}<x_i+\overline\delta< -x_i = x_{|i|}$, then $\alpha>0$ and we can take $j=k-1$.

The case of $x_i\in -I$ is symmetric and can be proved in the same manner.
\end{proof}

\begin{example}
\label{ex:impulses_constrained_size}
If $\mathcal Z(x)=[0,+\infty)$ for $x<0$, the analogous of Example \ref{ex:arbitrary_impulses} is now $Z(x_i)=\{0,x_{i+1}-x_i,\dots,x_{-i-1}-x_i\}$ for $i<0$. 
\end{example}

\begin{remark}
\label{r:maxargmax2}
Consider Example \ref{ex:impulses_constrained_size} in the context of Remark \ref{r:maxargmax1}. As in Proposition \ref{bounded_iterates} and due to Theorem \ref{sequence_contraction}, the less impulses needed between the two players to reach the common continuation region, the faster that the composition of the fixed-point operators of Algorithm \ref{sym_algo} becomes contractive. Hence, one could intuitively expect that when close enough to the solution, the choice of the maximum arg-maximum in (\ref{delta_star}) improves the performance of Algorithm \ref{sym_algo}. This is another motivation for such choice.  
\end{remark}

\section{Numerical results}
\label{s:numerics}
This section presents numerical results obtained on a series of experiments. See Introduction and Section \ref{s:symmetric_games} for the motivation and applications behind some of them. 
We do not assume additional constraints on the impulses in the analytical problem. 
All the results presented were obtained on equispaced grids with step size $h>0$ (to be specified) and with a discretization scheme as in Section \ref{s:discretization} and Example \ref{ex:impulses_constrained_size}. The extreme points of the grid are displayed on each graph. 

For the games with linear costs and gains of the form $c(x,\delta)=c^0+c^1\delta$ and $g(x,\delta)=g^0+g^1\delta$, with $c^0,c^1,g^0,g^1$ constant, the artificial boundary conditions were taken as LBC $=c^1$ and RBC $=g^1$ for a sufficiently extensive grid. They result from the observation that on a hypothetical symmetric NE of the form $\varphi^*=\big((-\infty, \overline x], \delta^*(x)=y^*-x\big)$, with $\overline x<0,\ \overline x< y^*\in\mathbb R$, the equilibrium payoff verifies $V(x)=V(y^*)-c^0-c^1(y^*-x)$ for $x<\overline x$ and $V(x)=V(-y^*)+g^0+g^1(x+y^*)$ for $x>-\overline x$. For other examples, LBC, RBC and the grid extension were chosen by heuristic guesses and/or trial and error. However, in all the examples presented the error propagation from poorly chosen LBC,RBC was minimal. 

The initial guess was set as $v_0=0$ and its induced strategy in all cases. \textsc{SolveImpulseControl} was chosen as Subroutine \ref{solve_one_player} (see Remark \ref{r:subroutine}) with $\widetilde {tol}=10^{-15}$ and $\lambda=1$.\footnote{For very fine grids, one should increase the value of $\widetilde {tol}$ to avoid stagnation (see, e.g, Remark \ref{practical_considerations}).} Its convergence was exact however, in all the examples. Instead of fixing a terminal tolerance $tol$ beforehand, we display the highest accuracy attained and number of iterations needed.

Section \ref{s:games_on_fixed_grid} considers a fixed grid and games where the results point to the existence of a symmetric NE as per Corollary \ref{coro_sym_QVIs}. Not having an analytical solution to compare with, results are assessed by means of the percentage difference between the iterates 
$$\mbox{Diff}\defeq\left\|(v^{k+1}-v^k)/\max\{|v^{k+1}|,scale\}\right\|_{\infty},$$ 
with $scale=1$ as in \cite{AF}, and the maximum pointwise residual to the system of QVIs (\ref{dQVIs}), defined for $v\in\mathbb R^\grid$ by setting $I=\{Lv+f\leq Mv-v\}\cap\grid_{<0}$, $C=I^c$ and
$$
\mbox{maxResQVIs}(v)\defeq\left\|  \max\{Lv+f,Mv-v\}\mathbbm 1_{-C} + (Hv-v)\mathbbm 1_{-I} \right\|_{\infty}.
$$
Section \ref{cv_analytical_sol} considers the only symmetric games in the literature with (semi-) analytical solution: the central bank linear game \cite{ABCCV} and the cash management game \cite{BCG}, and computes the errors made by discrete approximations, showing in particular the effect of refining the grid. Not considered here is the strategic pollution control game \cite{FK}, due to its inherent non-symmetric nature. Section \ref{s:games_without_NE} comments on results obtained for games without NEs. Finally, Section \ref{s:beyond_verif_theo} shows results that go beyond the scope of the currently available theory for impulse games. 

\subsection{Convergence to discrete solution on a fixed grid}
\label{s:games_on_fixed_grid}

Throughout this section the grid step size is fixed as $h=0.01$, unless otherwise stated (although results where corroborated by further refinements). Each figure specifies the structure, $\mathcal G=(\mu,\sigma,\rho,f,c,g)$, of the symmetric game solved and shows the numerical solutions at the terminal iteration for the equilibrium payoff, $v^k$, and NE. Graphs plot payoff versus state of the process. The intervention region is displayed in red over the graph of the payoff for presentation purposes.

We focus on games with higher costs than gains, as the opposite typically leads to players attempting to apply infinite simultaneous impulses \cite{ABCCV} (i.e., inducing a gain from the opponent's intervention is ``cheap") leading to degenerate games. The following games resulted in exact convergence in finite iterations, which guarantees a solution of (\ref{dQVIs}) was reached (Corollary \ref{convergence_to_solution}), although very small acceptable errors were reached sooner.

\begin{figure}[H]
\hspace*{.2cm}
	\includegraphics[scale=.36]{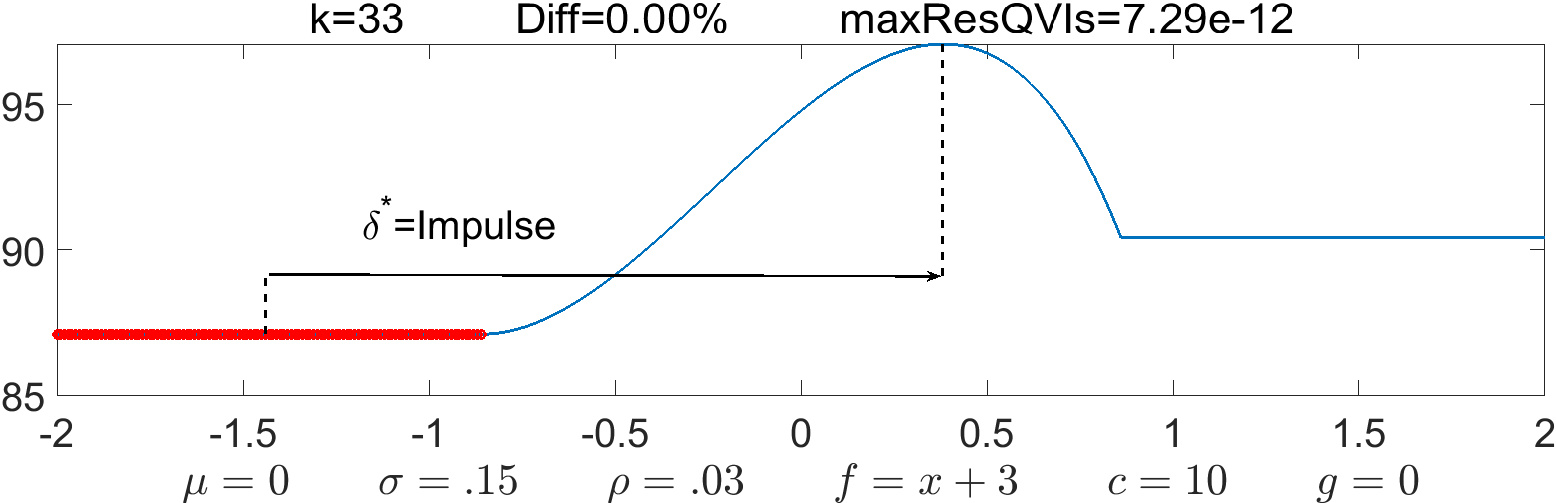}
\end{figure}
\begin{figure}[H]
\hspace*{.2cm}
	\includegraphics[scale=.36]{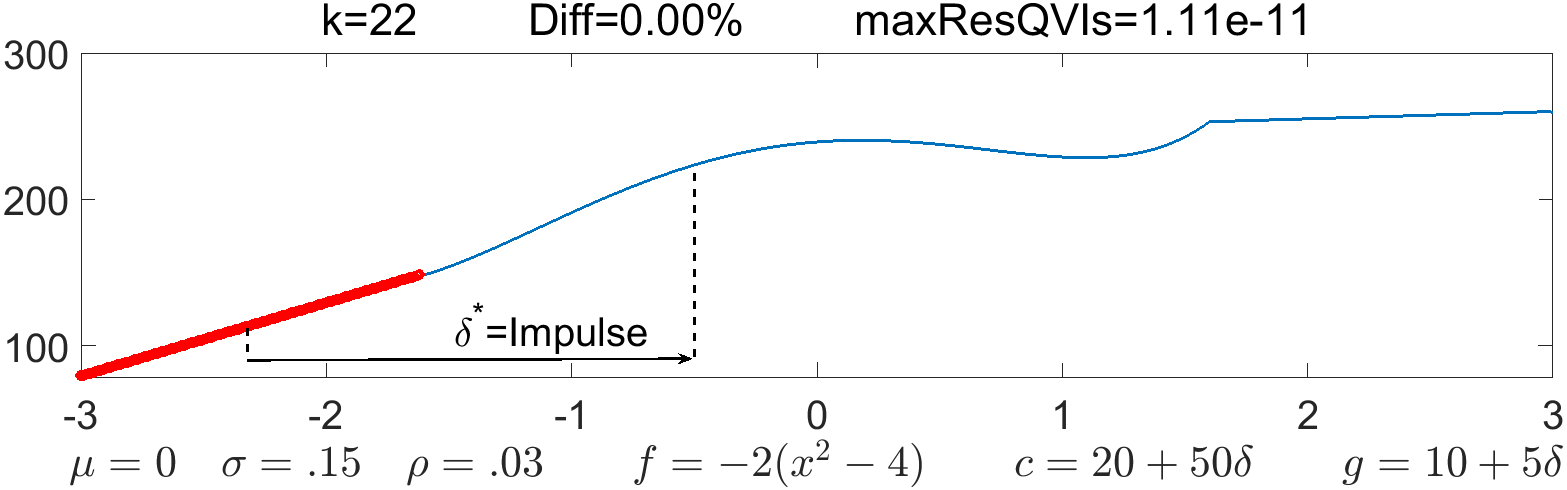}
\end{figure}
\begin{figure}[H]
\hspace*{.2cm}
	\includegraphics[scale=.36]{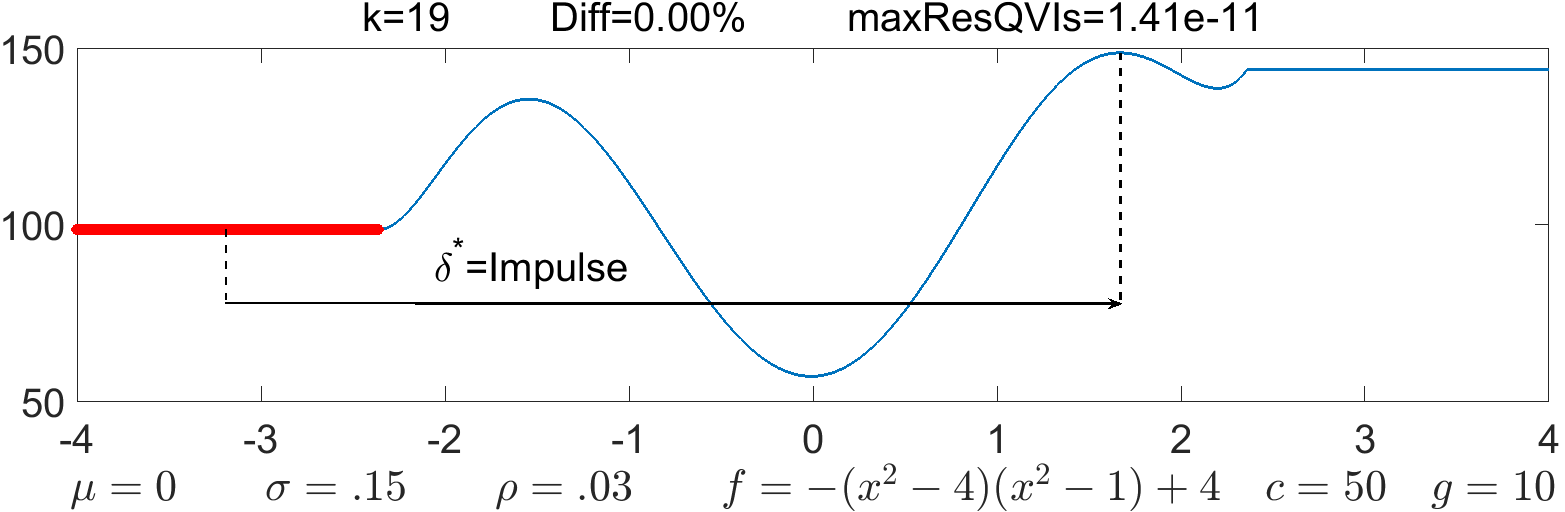}
\end{figure}
The following is an example in which the accuracy stagnates. At that point, the iterates start going back and forth between a few values. Although we cannot guarantee that we are close to a solution of (\ref{dQVIs}), the results seem quite convincing, with both Diff and maxResQVIs reasonably low. In fact, simply halving the step size to $h=0.005$ produces a substantial improvement of Diff=9.16e-11\% and maxResQVIs=9.19e-11 in $k=$33 iterations.
\begin{figure}[H]
\hspace*{.4cm}
	\includegraphics[width=.9\linewidth]{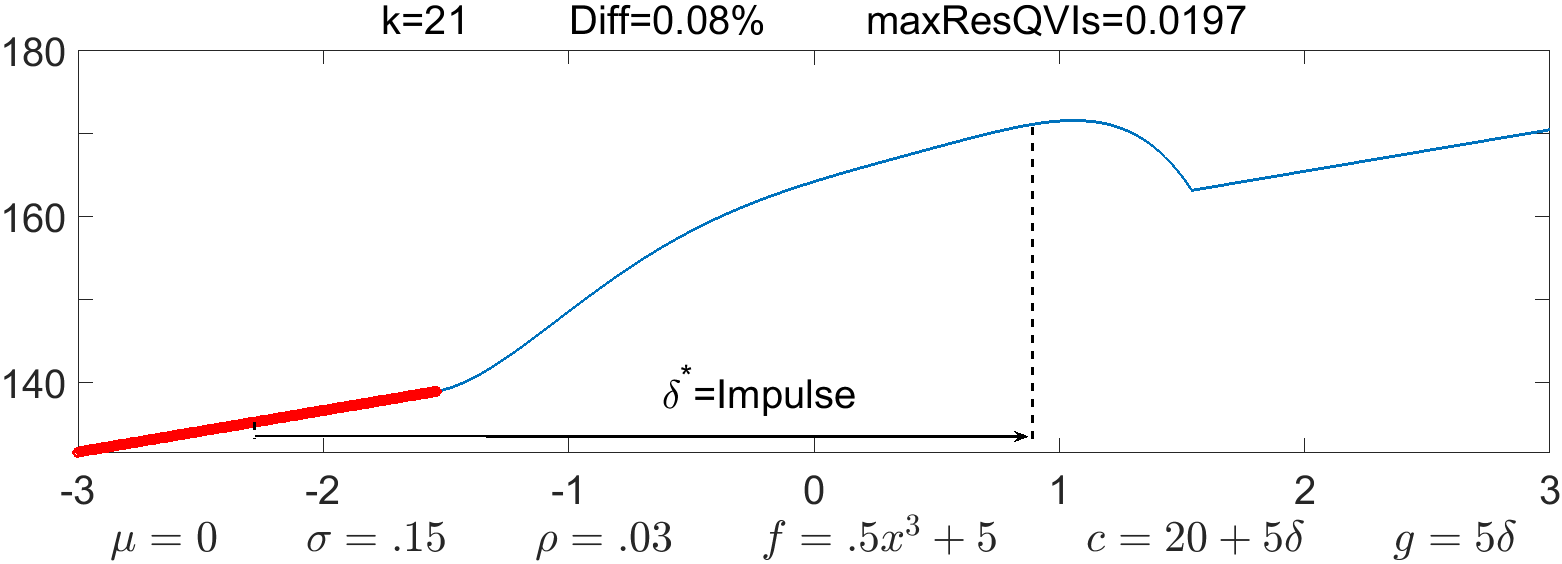}
\end{figure}
The previous games have a state variable evolving as a scaled Brownian motion. We now move on to a mean reverting OU process with zero long term mean. (Recall that any other value can be handled simply by shifting the game.) In general, the experiments with these dynamics converged exactly in a very small number of iterations.
\begin{figure}[H]
\hspace*{.2cm}
	\includegraphics[scale=.36]{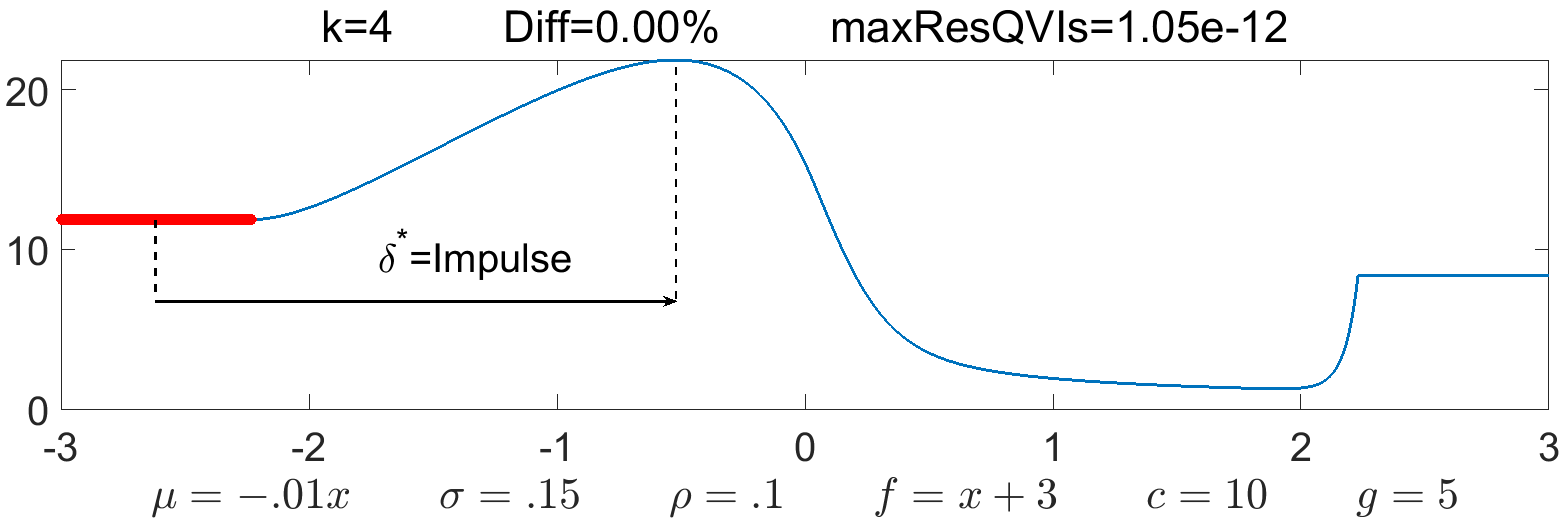}
\end{figure}
\begin{figure}[H]
\hspace*{.2cm}
	\includegraphics[scale=.36]{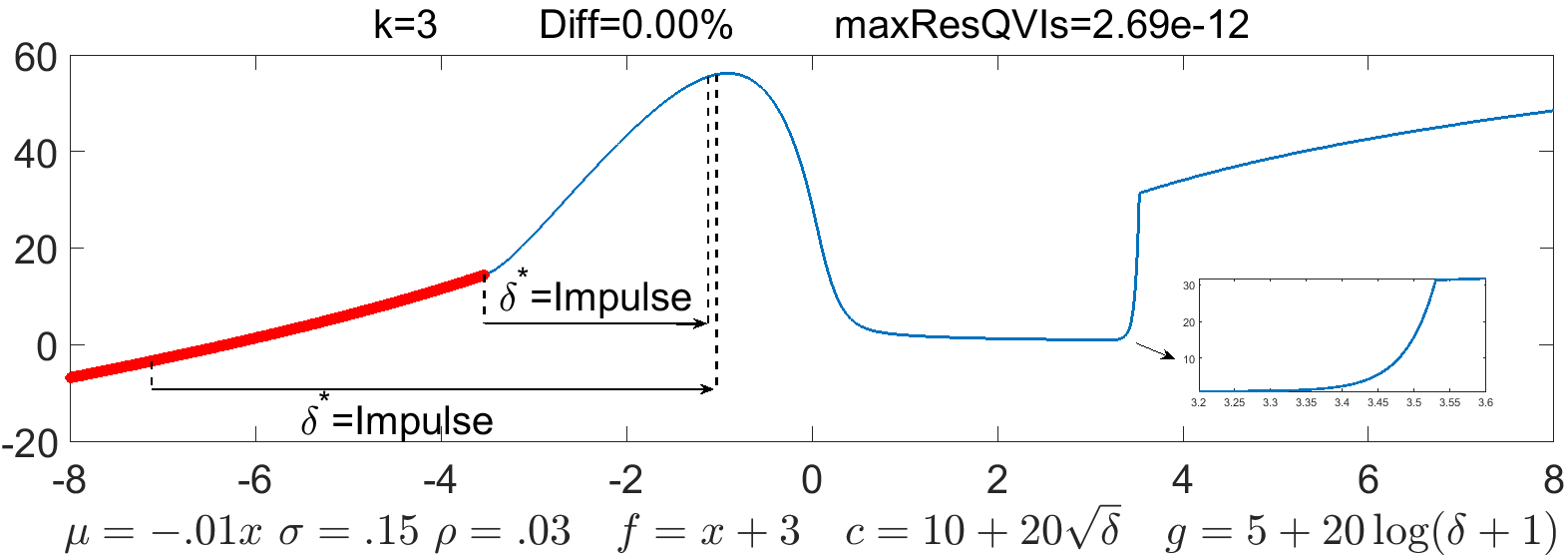}
\end{figure}
Note how all the games treated in this section exhibit a typical feature known to hold for simpler symmetric games \cite{ABCCV,ABMZZ,BCG}: the equilibrium payoff of the player is only $C^1$ at the border of the intervention region $\partial\mathcal I^*=\{\mathcal AV-\rho V+f = \mathcal MV-V \}\cap\mathbb R_{<0}$, and only continuous at the border of the opponent's intervention region $-\partial \mathcal I^*$. In floating point arithmetic, the former makes the discrete approximation of $\partial\mathcal I^*$ particularly elusive, while the latter can lead to high errors when close to $-\partial\mathcal I^*$. As a consequence, Subroutine \ref{solve_one_player} (or any equivalent) will often misplace a few grid nodes between intervention and continuation regions, which will in turn make the residual resQVIs ``spike" on the opponent's side. Thus, a large value of maxResQVIs can at times be misleading, and further inspection of the pointwise residuals is advisable. 

As a matter of fact, halving the step size to $h=0.005$ in the last example results in exact convergence with terminal maxResQVIs$=2510$, but the residuals on all grid nodes other than the ``border" of the opponent's intervention region and a contiguous one are less than 1.43e-11. This is an extreme example propitiated by the almost vertical shape of the solution close to such border. Thus, while it is useful in practice to consider a stopping criteria for Algorithm \ref{sym_algo} based on maxResQVIs, this phenomenon needs to be minded. 

The last example also shows how impulses at a NE can lead to different endpoints depending on the state of the process. This is often the case when costs are nonlinear. In fact:

\begin{lemma}
\label{nonunique_endpoint}
Let $(\mathcal I^*,\delta^*)$ and $V$ be a symmetric NE and equilibrium payoff as in Corollary \ref{coro_sym_QVIs}. Assume that $\mathcal Z(x)=[0,+\infty)$ for $x<0$ and $c=c(x,\delta)\in C^2\big(\mathbb R\times (0,+\infty)\big)$, and consider the re-parametrization $\overline c=\overline c(x,y) \defeq c(x,y-x)$. Suppose that $y^*\defeq x+\delta^*(x)$ is constant for all $x\in{(\mathcal I^*)}^\circ$.\footnote{$A^\circ$ denotes the interior of the set $A\subseteq \mathbb R$.} Then $\overline c_{xy}(x,y^*)\equiv 0$ on ${(\mathcal I^*)}^\circ$. 
\end{lemma}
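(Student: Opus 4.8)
The plan is to exploit the fact that on the interior of the intervention region, the player faces an impulse control problem (third equation of the QVIs, Corollary \ref{coro_sym_QVIs}), so the equality $\mathcal M V = V$ holds there and, by the UIP, the supremum defining $\mathcal M V(x)$ is attained at the unique maximizer $\delta^*(x)$. First I would rewrite everything in terms of the endpoint variable. Setting $y = x + \delta$ and using the re-parametrization $\overline c(x,y) = c(x,y-x)$, the loss operator becomes $\mathcal M V(x) = \sup_{y \geq x} \{ V(y) - \overline c(x,y)\}$, and the (assumed constant) maximizer is $y = y^*$ for every $x \in (\mathcal I^*)^\circ$. Since $\mathcal Z(x) = [0,+\infty)$ gives an open constraint $y > x$ and we are in the interior of $\mathcal I^*$ (so $y^* > x$ strictly, by Remark \ref{r:zero_impulse}, as a null impulse is suboptimal), the maximizer is interior and the first-order condition reads
\begin{equation*}
V'(y^*) - \overline c_y(x,y^*) = 0 \quad \text{for all } x \in (\mathcal I^*)^\circ.
\end{equation*}

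The key step is then to differentiate this identity in $x$. The left-hand side is a function of $x$ that vanishes identically on the open set $(\mathcal I^*)^\circ$; since $y^*$ is constant, $V'(y^*)$ does not depend on $x$, so differentiating gives $-\partial_x\big[\overline c_y(x,y^*)\big] = -\overline c_{yx}(x,y^*) = 0$ on $(\mathcal I^*)^\circ$. By the $C^2$ assumption on $c$ (hence on $\overline c$, away from $y = x$, which holds since $y^* > x$), we have $\overline c_{xy} = \overline c_{yx}$ by Schwarz's theorem, yielding $\overline c_{xy}(x,y^*) \equiv 0$ on $(\mathcal I^*)^\circ$ as claimed.

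The main obstacle — really the only subtle point — is justifying that the maximizer is genuinely interior so that the first-order condition is an equality rather than an inequality, and that $V$ is differentiable at the single point $y^*$. For the former: $y^* > x$ strictly on $(\mathcal I^*)^\circ$ because a zero impulse strictly decreases the payoff (Remark \ref{r:zero_impulse}, as $c > 0$), and there is no upper constraint since $\mathcal Z(x) = [0,+\infty)$; moreover $y^*$ is the UIP maximizer, so no boundary maximizer competes. For the latter: $y^* = x + \delta^*(x)$ for $x \in (\mathcal I^*)^\circ$; whether $y^*$ lands in $\mathcal C^*$ or $\partial \mathcal C^*$ or $\mathcal I^*$ governs the regularity of $V$ at $y^*$, but in all the relevant configurations (and in particular when $y^* \in \mathcal C^*$, the generic case per Remark \ref{r:concave_costs}) $V$ is at least $C^1$ at $y^*$, so $V'(y^*)$ is well-defined and the argument goes through; if $y^* \in \partial\mathcal C^*$ one still has $V \in C^1(\mathcal C^*)$ up to the boundary by the regularity in Theorem \ref{verification}. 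One should also remark that it suffices to differentiate along the open interval(s) comprising $(\mathcal I^*)^\circ$, where all the above smoothness is available, so no issue arises at the endpoints of $\mathcal I^*$.
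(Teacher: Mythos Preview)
Your approach is exactly the paper's: write $\mathcal M V(x)=\sup_{y>x}\{V(y)-\overline c(x,y)\}=V(y^*)-\overline c(x,y^*)$ on $(\mathcal I^*)^\circ$, take the first-order condition $V'(y^*)=\overline c_y(x,y^*)$ at the interior maximizer, and differentiate in $x$ using that $y^*$ is constant.

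The one point that needs tightening is your justification that $V$ is differentiable at $y^*$. You phrase this in terms of whether $y^*$ lies in $\mathcal C^*$, $\partial\mathcal C^*$, or $\mathcal I^*$, and invoke Remark~\ref{r:concave_costs}; but the regularity hypothesis in Corollary~\ref{coro_sym_QVIs} is $V\in C^1(-\mathcal C^*)$, not $C^1(\mathcal C^*)$, so the relevant question is whether $y^*\in-\mathcal C^*$ (equivalently, $y^*\notin-\mathcal I^*$). Remark~\ref{r:concave_costs} also assumes concave costs, which are not assumed here. The paper dispatches this correctly in one clause: $y^*\notin-\mathcal I^*$, since otherwise the player would be shifting the state into the opponent's intervention region, triggering an immediate response and hence infinite simultaneous impulses, contradicting admissibility of the NE. That places $y^*$ in $-\mathcal C^*$, where $V$ is $C^1$, and your argument then goes through verbatim.
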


The result is immediate since $\mathcal MV(x)=\sup_{y>x}\{V(y)-\overline c(x,y)\} = V(y^*)-\overline c(x,y^*)$ on $(\mathcal I^*)^\circ$ and $V\in C^1\big(-\mathcal C^*)$. ($y^*\notin -\mathcal I^*$ or there would be infinite simultaneous impulses.) While the sufficient condition $\overline c_{xy}(x,y^*)\equiv 0$ for some $y^*$ is verified for linear costs, it is not in general and certainly not for $c(x,\delta)=10+20\sqrt\delta$ as in the last example.

\subsection{Convergence to analytical solution with refining grids}
\label{cv_analytical_sol}
A convergence analysis from discrete to analytical solution with refining grids is outside the scope of this paper, and seems to be far too challenging when a viscosity solutions framework is yet to be developed. Instead, we present here a numerical validation using the solutions of the linear and cash management games. We focus first and foremost in the former, as it has an almost fully analytical solution, with only one parameter to be found numerically as opposed to four for the latter. The structure of the game is defined with parameter values used in \cite{ABCCV} (also in \cite{ABMZZ}). To minimize rounding errors from floating point arithmetic, we proceed as in \cite{AF} considering grids made up entirely of machine numbers.\footnote{By \emph{machine numbers} we mean those that can be represented exactly in IEEE standard floating-point number system.} The results are displayed in Table \ref{table}. 

For each step size, Algorithm \ref{sym_algo} either converged or was terminated upon stagnation. Regardless, we can see the errors made when approximating the analytical solution are quite satisfactory in all cases, and overall decrease as $h\to 0$. Moreover, we see once again how the ``spiking" of the residual can be misleading (the highest value was always at the ``border" of the opponent's intervention region). 

\begin{SCtable}[][h]
	\begin{tabular}{llll}
		h  &   $\%\mbox{error}$  &  Its.  &  maxResQVIs \\ 
		\noalign{\smallskip}\hline\noalign{\smallskip}
		1  &      6.67$\%$       &   17  &\quad    8.88e-16 \\
	 1/2 &      8.33$\%$       &   13  &\quad    5.33e-15 \\
	 1/4 &      0.23$\%$       &    4  &\quad     13.2    \\
	 1/8 &      0.21$\%$       &    8  &\quad     15.1    \\
	1/16 &      0.16$\%$       &    8  &\quad      30     \\
	1/32 &      0.07$\%$       &   21  &\quad     21.2    \\
	1/64 &      0.0043$\%$     &   37  &\quad     0.343   \\
		\noalign{\smallskip}\hline
	\end{tabular}
	\caption{Convergence to analytical solution when refining equispaced symmetric grid with step size $h$ and endpoint $x_N=4$. Game: $\mu=0,\ \sigma=.15,\ \rho=.02,\ f=x+3,\ c=100+15\delta,\ g=15\delta$. \%error $\defeq \|(v-V)/V\|_{\infty}$, with $V$ exact solution and $v$ discrete approximation after Its iterations. $\|\cdot\|_\infty$ is computed over the grid.}
	\label{table} 
\end{SCtable}
An exact symmetric NE for this game (up to five significant figures) is given by the intervention region $\mathcal I^*=(-\infty,-2.8238]$ and impulse function $\delta^*(x)=1.5243-x$, while the approximation given by Algorithm \ref{sym_algo} with $h=1/64$ is $\big((-\infty, -2.8125],1.5313 - x\big)$, with absolute errors on the parameters of no more than the step size.

The cash management game \cite{BCG} with unidirectional impulses can be embedded in our framework by reducing its dimension with the change of variables $x=x_1-x_2$, changing minimization by maximization and relabelling the players. With the parameter values of \cite[Fig.1b]{BCG}, it translates into the game: $\mu=0,\ \sigma=1,\ \rho=.5,\ f=-|x|,\ c=3+\delta,\ g=-1$. The authors found numerically a symmetric NE approximately equal to $\big((-\infty, -5.658], 0.686 - x\big)$, while Algorithm \ref{sym_algo} with $x_N=8$ and $h=1/64$ gives $\big((-\infty, -5.6563], 0.6875 - x\big)$. The absolute difference on the parameters is once again below the grid step size.

\subsection{Games without Nash equilibria}
\label{s:games_without_NE}
It is natural to ask how Algorithm \ref{sym_algo} behaves on games without symmetric NEs, and whether anything can be inferred from the results. For the linear game, two cases without NEs (symmetric or not) are addressed in \cite{ABCCV}: ``no fixed cost" and ``gain greater than cost". Both of them yield degenerate ``equilibria" where the players perform infinite simultaneous interventions. When tested for several parameters, Algorithm \ref{sym_algo} converged in finitely many iterations (although rather slowly) and yielded the exact same type of ``equilibria".\footnote{There were also cases of stagnation, improved by refining the grid as in Section \ref{s:games_on_fixed_grid}.} For a fine enough grid, the previous can be identified heuristically by some node in the intervention region that would be shifted to its symmetric one (\textit{infinite alternated interventions}), or to its immediate successor over and over again, until reaching the continuation region (\textit{infinite one-sided interventions}).\footnote{More precisely, due to our choice of $Z(x)$ a node can be shifted at most to that immediately preceding its symmetric one.}

In the first case, we recovered the limit ``equilibrium payoffs" of \cite[Props.4.10,4.11]{ABCCV}. Intuitively, the players in this game take advantage of free interventions, whether by no cost or perfect compensation, in order to shift the process as desired. Note that when $c=g$, the impulses that maximimize the net payoff are not unique.

In the second case, grid refinements showed the discrete payoffs to diverge towards infinity at every point. This is again consistent with the theory: each player forces the other one to act, producing a positive instantaneous profit. Iterating this procedure infinitely often leads to infinite payoffs for every state.

Tested games in which Algorithm \ref{sym_algo} failed to converge (and not due to stagnation nor a poor choice of the grid extension) were characterized by iterates reaching a cycle, typically with high values of Diff and maxResQVIs regardless of the grid step size. In many cases, the cycles would visit at least one payoff inducing infinite simultaneous impulses. While this might, potentially and heuristically, be indicative of the game admitting no symmetric NE, there is not much more than can be said at this stage.

\subsection{Beyond the Verification Theorem}
\label{s:beyond_verif_theo}
We now present two games in which the solution of the discrete QVIs system (\ref{dQVIs}) found with Algorithm \ref{sym_algo} (by exact convergence) does not comply with the continuity and smoothness assumptions of Corollary \ref{coro_sym_QVIs}. However, the results are sensible enough to heuristically argue they may correspond to NEs beyond the scope of the Verification Theorem. In both cases $h=0.01$. Finer grids yielded the same qualitative results. 

The first game considers costs convex in the impulse. When far enough from her continuation region, it is cheaper for the player to apply several (finitely many) simultaneous impulses instead of one, to reach the state she wishes to (cf. Remark \ref{r:concave_costs}). 
In this game, the optimal impulse $\delta^*$ becomes discontinuous, and its discontinuity points are those in the intervention region where the equilibrium payoff is non-differentiable. These, in turn, translate into discontinuities on the opponent's intervention region (one degree of smoothness less, as with the border of the regions). 
\begin{figure}[H]
\hspace*{.2cm}
	\includegraphics[scale=.36]{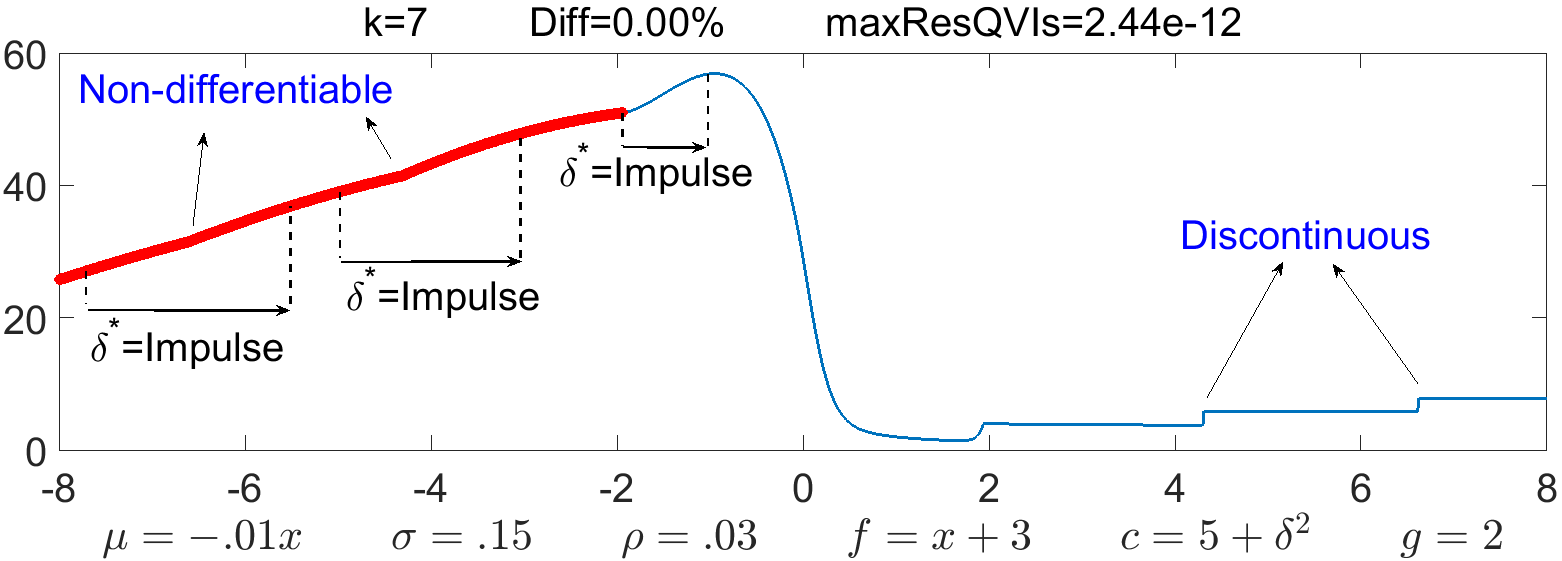}
\end{figure}

The second game considers linear gains, quadratic running payoffs and costs concave on the impulse. The latter makes the player shift the process towards her continuation region (cf. Remark \ref{r:concave_costs}). However, when far enough from the border, instead of shifting the process directly to her ``preferred area", the player chooses to pay a bit more to force her opponent's intervention, inducing a gain and letting the latter pay for the final move. Once again, this causes $\delta^*$ to be discontinuous and leads to a non-differentiable (resp. discontinuous) point for the equilibrium payoff in the intervention (resp. opponent's intervention) region.
\begin{figure}[H]
\hspace*{.2cm}
	\includegraphics[scale=.36]{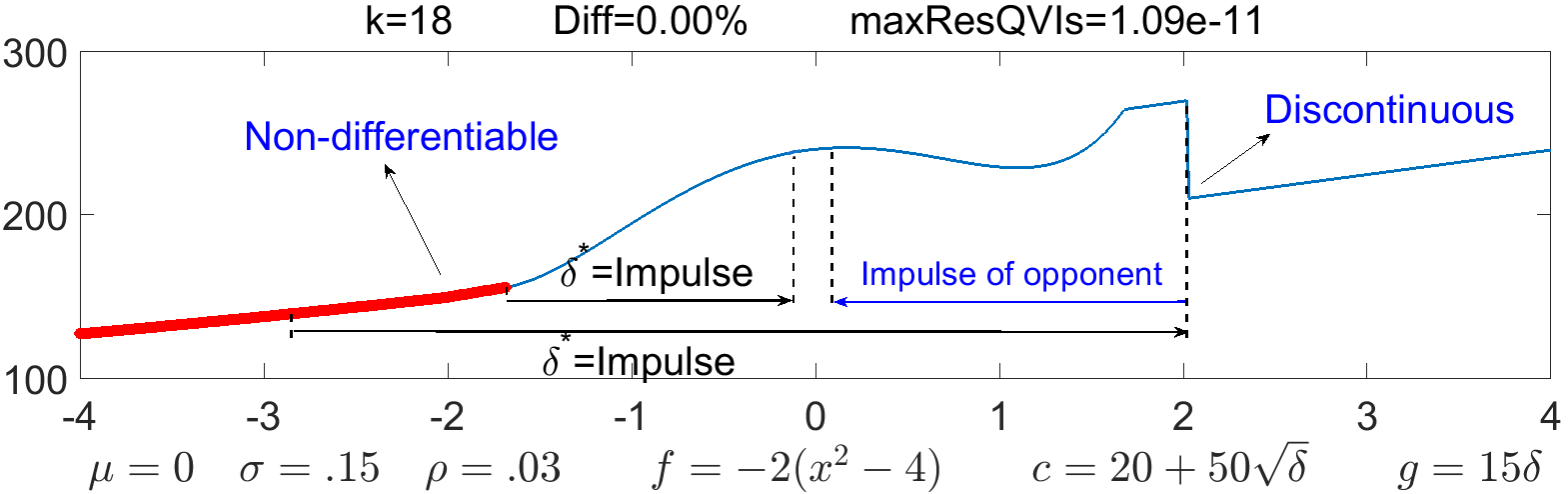}
\end{figure}

Under the previous reasoning, one could intuitively guess that setting $g=0$ in this game should remove the main incentive the player has to force her opponent to act. This is in fact the case, as shown below. As a result, $\delta^*$ becomes continuous and the equilibrium payoff falls back into the domain of the Verification Theorem. 
\begin{figure}[H]
\hspace*{.2cm}
	\includegraphics[scale=.36]{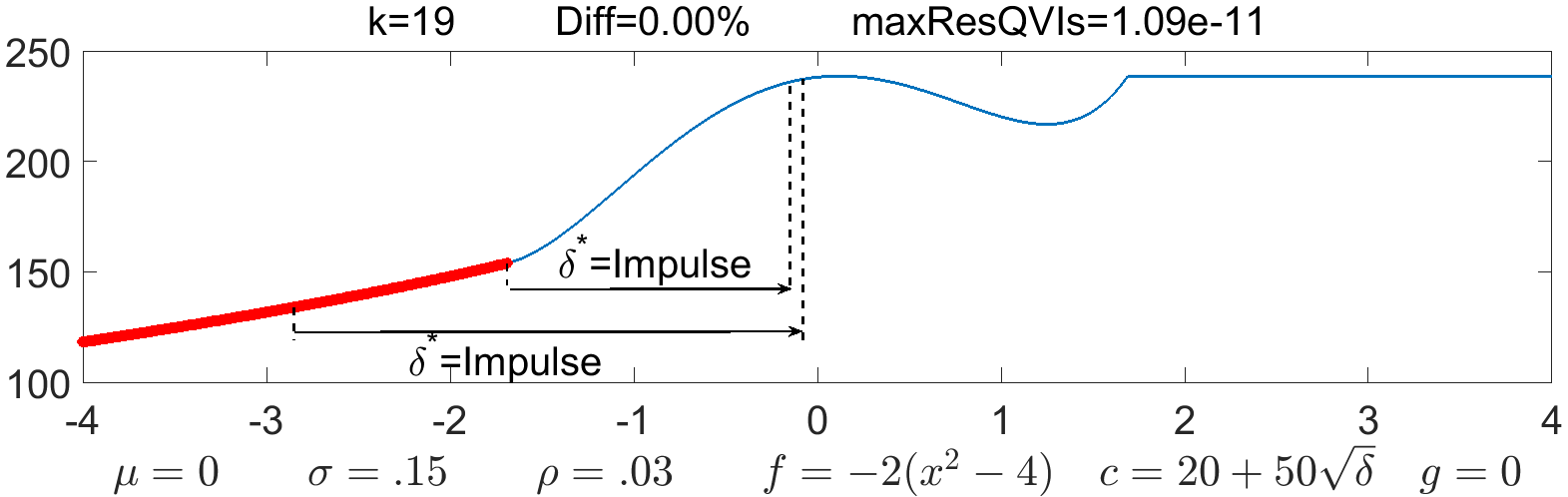}
\end{figure}
 
We remark that, should the previous solutions correspond indeed to NEs, then the alternative semi-analytical approach of \cite{FK} could not have produced them either, as that method can only yield continuous equilibrium payoffs. 
	
\section{Concluding remarks}
\label{s:conclusions}
This paper presents a fixed-point policy-iteration-type algorithm to solve systems of quasi-variational inequalities resulting from a Verification Theorem for symmetric nonzero-sum stochastic impulse games. In this context, the method substantially improves the only algorithm available in the literature while providing the convergence analysis that we were missing. Graph-theoretic assumptions relating to weakly chained diagonally dominant matrices, which naturally parallel the admissibility of the players strategies, allow to prove properties of contractiveness, boundedness of iterates and convergence to solutions. A result of theoretical interest giving sufficient conditions for convergence is also proved. Additionally, a novel provably convergent impulse control solver is provided.

Equilibrium payoffs and Nash equilibria of games too challenging for the available analytical approaches are computed with high precision on a discrete setting. Numerical validation with analytical solutions is performed when possible, with reassuring results, but it is noted that grid refinements may be needed at times to overcome stagnation. Thus, formalising the approximating properties of the discrete solutions as well as deriving stronger convergence results for the algorithm may need a viscosity solutions framework currently missing in the theory. This is further substantiated by the irregularity of the solutions, particularly those found which escape the available theoretical results. This motivates further research while providing a tool that can effectively be used to gain insight into these very challenging problems.

\appendix
\section{Matrix and graph-theoretic definitions and results}
\label{appendix:matrices}
\setcounter{subsection}{1}

For the reader's convenience, this appendix summarizes some important algebraic and graph-theoretic definitions and results used throughout the paper. More details can be found in the references given below. Henceforth, $A\in\mathbb R^{N\times N}$ is a real matrix, $\operatorname{Id}\in\mathbb R^{N\times N}$ is the identity, $\rho(\cdot)$ denotes the spectral radius and $\mathbb R^{N},\mathbb R^{N\times N}$ are equipped with the elementwise order. We talk about rows and ``states" interchangeably.
\begin{definitions}
\label{matrix_definitions}
\begin{enumerate}[label=(D\arabic*)]
\item $A$ is a \textit{Z-matrix} if it has nonpositive off-diagonal elements.
\item $A$ is an \textit{$L$-matrix} (resp. \textit{$L_0$-matrix}) if it is a Z-matrix with positive (resp. nonnegative) diagonal elements.
\item $A$ is an \textit{$M$-matrix} if $A=s\operatorname{Id} - B$ for some matrix $B\geq 0$ and scalar $s\geq \rho(B)$.
\item $A$ is \textit{monotone} if it is nonsingular and $A^{-1}\geq 0$. Equivalently, $A$ is monotone if $Ax\geq 0$ implies $x\geq 0$ for any $x\in\mathbb R^N$. 
\item The $i$-th row of $A$ is \textit{weakly diagonally dominant (WDD)} (resp. \textit{strictly diagonally dominant or SDD}) if $|A_{ii}|\geq\sum_{j\neq i}|A_{ij}|$ (resp. $>$).
\item $A$ is \textit{WDD} (resp. \textit{SDD}) if every row of $A$ is WDD (resp. SDD). 
\item The \textit{directed graph of }$A$ is the pair graph$A\defeq(V,E)$, where $V\defeq\{1,\dots,N\}$ is the set of \textit{vertexes} and $E\subseteq V\times V$ is the set of \textit{edges}, such that $(i,j)\in E$ iff $A_{ij}\neq 0$.
\item A \textit{walk} $\omega$ in graph$A=(V,E)$ from vertex $i$ to vertex $j$ is a nonempty finite sequence $(i,i_1),(i_1,i_2),\dots,(i_{k-1},j)\subseteq E$, which we denote by $\omega:i\to i_1\to\dots\to i_{k-1}\to j$. $|\omega|\defeq k$ is called the \textit{length} of the walk $\omega$.
\item $A$ is \textit{weakly chained diagonally dominant (WCDD)} if it is WDD and for each WDD row of $A$ there is a walk in graph$A$ to an SDD row (identifying vertexes and rows).
\item $A$ is (right) \textit{substochastic or sub-Markov} (resp. \textit{stochastic or Markov}) if $A\geq 0$ and each row sums at most one (resp. exactly one). Equivalently, $A$ is substochastic if $A\geq 0$ and $\|A\|_\infty\leq 1$. (Recall that $\|A\|_\infty$ is the maximum row-sum of absolute values.)
\item If $A$ is a WDD (resp. substochastic) matrix, its set of ``non-trouble states" (or rows) is $J[A]\defeq \{i:\mbox{the }i\mbox{-th row of }A\mbox{ is SDD}\}$ (resp. $\hat J[A]\defeq \{i:\sum_j A_{ij}<1\}$). For each $i$, we write $P_i[A]\defeq\{\mbox{walks in graph}A\mbox{ from }i\mbox{ to some }j\in J[A]\}$ (resp. we define $\widehat P_i[A]$ analogously). The \textit{index of connectivity} (resp. \textit{index of contraction}) of $A$ \cite{A} is 
$$
\mbox{con}A\defeq\left(\sup_{i\notin J[A]}\left\{ \inf_{\omega\in P_i[A]}|\omega|\right\}\right)^+\mbox{ (resp. we define $\widehat{\mbox{con}}A$ analogously).}\footnote{$(\cdot)^+$ denotes positive part, $\inf\emptyset=+\infty$ and $\sup\emptyset=-\infty$. The index is the least length that needs to be walked on graph$A$ to reach the non-trouble states when starting from an arbitrary trouble one.}
$$
\end{enumerate}
\end{definitions} 

It is clear that SDD$\implies$WCDD$\implies$WDD, and by definition, L-matrix$\implies\mbox{L}_0$-matrix$\implies$Z-matrix. Also by definition, if $A$ is WDD then: $A$ is WCDD $\iff$ con$A<+\infty$.

\begin{proposition}(e.g., \cite{Sh} or \cite[Lem.3.2]{AF})
\label{WCDD_is_nonsing}
Any WCDD matrix is nonsingular.
\end{proposition}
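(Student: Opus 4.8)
The plan is to argue by contradiction using a maximum-modulus argument on a null vector, which is the classical approach (as in \cite{Sh} and \cite[Lem.3.2]{AF}). Suppose $A\in\mathbb R^{N\times N}$ is WCDD but singular, so there exists $x\in\mathbb R^N$ with $x\neq 0$ and $Ax=0$. Since $x\neq 0$ we have $\|x\|_\infty>0$, and after dividing by $\|x\|_\infty$ we may assume $\|x\|_\infty=1$. Set $S\defeq\{i:|x_i|=1\}$, which is nonempty. The goal is to show that $S$ is "absorbing" for the graph of $A$ and contains only non-SDD rows, which then contradicts the chain condition.

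First I would analyse the equality case of diagonal dominance on rows indexed by $S$. Fix $i\in S$. From $(Ax)_i=0$ we get $A_{ii}x_i=-\sum_{j\neq i}A_{ij}x_j$, hence $|A_{ii}|=|A_{ii}x_i|\le\sum_{j\neq i}|A_{ij}||x_j|\le\sum_{j\neq i}|A_{ij}|$; since $A$ is WDD the reverse inequality holds too, so all inequalities are equalities. In particular $\sum_{j\neq i}|A_{ij}|(1-|x_j|)=0$ with every summand nonnegative, which forces $|x_j|=1$ whenever $A_{ij}\neq 0$; that is, every out-neighbour of $i$ in graph$A$ lies in $S$. Moreover the equality $|A_{ii}|=\sum_{j\neq i}|A_{ij}|$ shows row $i$ is not SDD. (If $A_{ii}=0$ this identity makes row $i$ identically zero, so it has no out-edge and cannot reach an SDD row, already contradicting WCDD; thus we may assume all rows indexed by $S$ are nonzero.)

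Finally I would invoke the weak chaining. Pick any $i\in S$. It is a WDD row, so by the WCDD hypothesis there is a walk $i\to i_1\to\dots\to i_k$ in graph$A$ ending at an SDD row $i_k$. Applying the previous step edge by edge gives $i_1\in S$, then $i_2\in S$, and inductively $i_k\in S$; but no row indexed by $S$ is SDD, contradicting the choice of $i_k$. Hence no such $x$ exists and $A$ is nonsingular.

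The argument is short and I do not expect a genuine obstacle; the only point requiring a little care is the degenerate situation $A_{ii}=0$ (equivalently a zero row), handled as indicated above, and making sure the equality-case deduction $|x_j|=1$ for out-neighbours is stated for $j\neq i$ so that self-loops are harmless. The conceptual heart is simply that equality in the diagonal-dominance bound, forced by the null equation, propagates the maximum modulus along walks, so it can never reach an SDD row.
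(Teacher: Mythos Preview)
Your argument is correct and is precisely the classical maximum-modulus proof given in the references the paper cites (the paper itself does not supply a proof, only the citations). There is nothing to add.
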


\begin{proposition}(e.g., \cite[Prop.2.15 and 2.17]{A}) \label{M_monotone_equiv}
\label{M-matrix_characterization}

Nonsingular M-matrix $\iff$ monotone L-matrix $\iff$ monotone Z-matrix.
\end{proposition}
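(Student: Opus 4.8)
The plan is to prove the two stated equivalences by closing a cycle of implications among the three properties: $(a)$ ``$A$ is a nonsingular M-matrix'', $(b)$ ``$A$ is a monotone L-matrix'', and $(c)$ ``$A$ is a monotone Z-matrix''. Since every L-matrix is a Z-matrix, it suffices to establish $(a)\Rightarrow(c)\Rightarrow(b)\Rightarrow(a)$. I would treat each arrow separately using only the Neumann series, the elementwise nonnegativity of $A^{-1}$, and the Perron--Frobenius theorem for nonnegative matrices.

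For $(a)\Rightarrow(c)$ I would write $A=s\operatorname{Id}-B$ with $B\geq 0$ and $s\geq\rho(B)$; the off-diagonal entries of $A$ are $-B_{ij}\leq 0$, so $A$ is a Z-matrix. Nonsingularity of $A$ rules out $s=\rho(B)$ (otherwise $\rho(B)$, an eigenvalue of $B$ by Perron--Frobenius, would force $\det A=0$), so $\rho(B/s)<1$; then $A^{-1}=s^{-1}\sum_{n\geq0}(B/s)^n$ converges to a matrix that is elementwise nonnegative, i.e.\ $A$ is monotone.

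For $(c)\Rightarrow(b)$ the only thing left to verify is that a monotone Z-matrix $A$ has positive diagonal (then it is an L-matrix). I would argue by contradiction: if $A_{kk}\leq 0$ then, $A$ being a Z-matrix, the whole $k$-th row is nonpositive; taking $x\defeq A^{-1}e_k$ (with $e_k$ the $k$-th canonical vector) gives $x\geq 0$ by monotonicity and $x\neq 0$ by nonsingularity, but the $k$-th coordinate of $Ax=e_k$ then reads $1=\sum_jA_{kj}x_j\leq 0$, a contradiction. For $(b)\Rightarrow(a)$ I would set $s\defeq\max_iA_{ii}>0$ and $B\defeq s\operatorname{Id}-A\geq 0$ (its off-diagonal entries are $-A_{ij}\geq0$ and its diagonal entries $s-A_{ii}\geq 0$), so $A=s\operatorname{Id}-B$; nonsingularity of $A$ is assumed, and $s\geq\rho(B)$ would follow from Perron--Frobenius: picking $v\geq0$, $v\neq0$ with $Bv=\rho(B)v$ gives $Av=(s-\rho(B))v$, which, if $s\leq\rho(B)$, is $\leq 0$, whence $v=A^{-1}(Av)\leq 0$ and thus $v=0$, a contradiction. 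Hence $A$ is a nonsingular M-matrix.

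All the computations here are one-liners; the substantive ingredient, invoked twice, is the Perron--Frobenius fact that $\rho(B)$ is an eigenvalue of a nonnegative $B$ with a nonnegative eigenvector. That is the step I expect to be the main (and really the only) obstacle, and if a self-contained account is wanted it would be cleanest to cite it rather than reprove it.
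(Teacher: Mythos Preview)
The paper does not prove this proposition; it is stated with a citation and no proof environment follows. Your argument is therefore not competing with anything in the paper, and it is correct: the cycle $(a)\Rightarrow(c)\Rightarrow(b)\Rightarrow(a)$ closes cleanly with the tools you name. The Neumann series step in $(a)\Rightarrow(c)$ is fine once $s>\rho(B)$ is secured (and your Perron--Frobenius argument does secure it, implicitly also ruling out $s=0$, since that would force $\rho(B)=0=s$ and hence singularity); the diagonal-positivity argument in $(c)\Rightarrow(b)$ is the standard one and is airtight; and in $(b)\Rightarrow(a)$ your use of a nonnegative Perron eigenvector to show $s\geq\rho(B)$ is correct, with the monotonicity of $A$ turning $Av\leq 0$ into $v\leq 0$ and hence $v=0$.

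The only remark worth making is that you invoke the Perron--Frobenius theorem for general nonnegative matrices (not irreducible or strictly positive ones): that $\rho(B)$ is an eigenvalue with a nonnegative eigenvector. This is indeed true and standard, but it is the slightly more delicate version of the theorem, so your closing comment about citing rather than reproving it is well taken.
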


\begin{theorem}(e.g., \cite[Thm.2.24]{A})
\label{characterization_theorem}
WCDD $\mbox{L}_0$-matrix $\iff$ WDD nonsingular M-matrix.\footnote{\cite[Thm.2.24]{A} is formulated in terms of L-matrices instead. However, it is trivial to see that: WCDD $\mbox{L}_0$-matrix $\iff$ WCDD L-matrix.}
\end{theorem}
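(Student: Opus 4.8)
\textit{Proof proposal.} The plan is to establish the two implications separately, leaning on the equivalences ``nonsingular M-matrix $\iff$ monotone L-matrix $\iff$ monotone Z-matrix'' (Proposition \ref{M-matrix_characterization}), the nonsingularity of WCDD matrices (Proposition \ref{WCDD_is_nonsing}), and the definitions in \ref{matrix_definitions}. One preliminary remark is useful throughout: a WCDD $\mbox{L}_0$-matrix is automatically an L-matrix, since if a diagonal entry vanished then WDD-ness would force that whole row to be zero, so the corresponding vertex would have no outgoing edge in the graph of $A$ and could not be joined by a walk to an SDD row, contradicting WCDD-ness. In particular a WCDD $\mbox{L}_0$-matrix is a Z-matrix with positive diagonal.

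\textit{($\Rightarrow$).} Let $A$ be a WCDD $\mbox{L}_0$-matrix. It is WDD by definition and nonsingular by Proposition \ref{WCDD_is_nonsing}, so by Proposition \ref{M-matrix_characterization} it suffices to show $A$ is monotone, i.e. that $Ax\ge 0$ implies $x\ge 0$. I would run the classical minimum argument: assuming $m:=\min_i x_i<0$, set $S:=\{i:x_i=m\}$ and expand $(Ax)_i\ge 0$ for $i\in S$, using $A_{ij}=-|A_{ij}|$ for $j\ne i$ and writing $x_j=x_i+\varepsilon_j$ with $\varepsilon_j\ge 0$. The inequality rearranges into $-x_i\big(A_{ii}-\sum_{j\ne i}|A_{ij}|\big)+\sum_{j\ne i}|A_{ij}|\varepsilon_j\le 0$, a sum of two nonnegative quantities that must therefore both vanish; this forces (a) row $i$ to be WDD but not SDD, and (b) $x_j=m$ for every out-neighbour $j$ of $i$ in the graph of $A$. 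Hence every vertex of $S$ indexes a non-SDD row all of whose out-neighbours again lie in $S$, so no walk starting in $S$ can ever reach an SDD row, contradicting WCDD-ness. Thus $x\ge 0$, $A$ is monotone, and since $A$ is also a Z-matrix, Proposition \ref{M-matrix_characterization} gives that $A$ is a WDD nonsingular M-matrix.

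\textit{($\Leftarrow$).} Let $A$ be a WDD nonsingular M-matrix. By Proposition \ref{M-matrix_characterization} it is a monotone L-matrix, hence an $\mbox{L}_0$-matrix; only WCDD-ness remains. Suppose it fails: since every row is WDD, there is a row $i_0$ that is not SDD and from which no walk in the graph of $A$ reaches an SDD row (note $A_{i_0i_0}>0$ gives the self-loop $(i_0,i_0)$, so ``not SDD'' is precisely the obstruction). Let $S$ be the set of vertices reachable from $i_0$, including $i_0$ itself; no vertex of $S$ is SDD, and $S$ is closed under edges, so $A_{ij}=0$ whenever $i\in S$ and $j\notin S$. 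Ordering $S$ first makes $A$ block lower triangular with diagonal block $B:=A[S,S]$, and for each $i\in S$ the $i$-th row sum of $B$ equals $A_{ii}-\sum_{j\ne i}|A_{ij}|=0$ because row $i$ is WDD but not SDD. Therefore $B\mathbf 1_S=0$, so $\det A=\det B\cdot\det(A[S^c,S^c])=0$, contradicting nonsingularity. Hence $A$ is WCDD. The main obstacle in both directions is the graph bookkeeping: in ($\Rightarrow$) one must squeeze out of the equality case both the ``not SDD'' conclusion and the propagation of the minimum along edges, and in ($\Leftarrow$) one must recognize that the reachable set of a ``bad'' vertex is edge-closed and produces a singular triangular block; everything else is routine.
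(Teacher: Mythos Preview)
Your argument is correct in both directions. The paper does not actually supply a proof of this theorem; it is quoted from \cite[Thm.2.24]{A} as a known result, so there is nothing in the paper to compare against. Your route is the standard one: for $(\Rightarrow)$ the minimum argument combined with Proposition~\ref{WCDD_is_nonsing} and Proposition~\ref{M-matrix_characterization}, and for $(\Leftarrow)$ the block-triangular reduction on the reachable set of a ``bad'' vertex. Both are cleanly executed; the only point worth making explicit is that in the backward direction you silently use $A_{ii}>0$ (from $A$ being an L-matrix via Proposition~\ref{M-matrix_characterization}) to conclude that the row sum of $B$ equals $A_{ii}-\sum_{j\ne i}|A_{ij}|$ rather than $-A_{ii}-\sum_{j\ne i}|A_{ij}|$, but you have already noted this.
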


\begin{proposition} (see proof of \cite[Lem.2.22]{A})
\label{substochastic_WDD_link}
$A$ is substochastic if and only if $\operatorname{Id}-A$ is a WDD $\mbox{L}_0$-matrix and $A$ has non-negative diagonal elements. In such case, $\hat J[A]=J[\operatorname{Id}-A]$, they have the same directed graphs (except possibly for self-loops $i\to i$) and $\widehat{\emph{\mbox{con}}}A=\emph{\mbox{con}}[\operatorname{Id}-A]$.
\end{proposition}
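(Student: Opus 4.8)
The plan is to read both directions of the biconditional, and all three supplementary claims, off a single entrywise comparison of $A$ with $\operatorname{Id}-A$; no deep input is needed. For the equivalence: if $A$ is substochastic then $A_{ii}\geq 0$, the off-diagonal entries $-A_{ij}$ of $\operatorname{Id}-A$ are $\leq 0$ so $\operatorname{Id}-A$ is a Z-matrix, and $0\leq A_{ii}\leq\sum_j A_{ij}\leq 1$ forces the diagonal entries $1-A_{ii}$ to be $\geq 0$, giving an $\mbox{L}_0$-matrix; with the sign $1-A_{ii}\geq 0$ pinned down, weak diagonal dominance of row $i$ reads $1-A_{ii}\geq\sum_{j\neq i}A_{ij}$, i.e. $\sum_j A_{ij}\leq 1$, which holds. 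Conversely, if $\operatorname{Id}-A$ is a WDD $\mbox{L}_0$-matrix and $A_{ii}\geq 0$, then the Z-matrix property gives $A_{ij}\geq 0$ for $i\neq j$, hence $A\geq 0$; the $\mbox{L}_0$ property gives $A_{ii}\leq 1$, so $|1-A_{ii}|=1-A_{ii}$ and weak diagonal dominance of row $i$ again rearranges to $\sum_j A_{ij}\leq 1$. Each direction is thus a rearrangement once the sign of $1-A_{ii}$ is known.

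Under the hypotheses of the ``in such case'' clause, row $i$ of $\operatorname{Id}-A$ is SDD iff $1-A_{ii}>\sum_{j\neq i}A_{ij}$, i.e. iff $\sum_j A_{ij}<1$, which is exactly the condition $i\in\hat J[A]$; hence $\hat J[A]=J[\operatorname{Id}-A]$. For $i\neq j$ we have $(\operatorname{Id}-A)_{ij}=-A_{ij}$, so the two matrices have the same nonzero off-diagonal entries and their directed graphs can differ only in the self-loops $i\to i$. For the index equality, since $\hat J[A]=J[\operatorname{Id}-A]$ the outer suprema defining $\widehat{\mbox{con}}A$ and $\mbox{con}[\operatorname{Id}-A]$ run over the same set, so it suffices to match, for each $i\notin\hat J[A]$, the minimal length of a walk from $i$ into that set in the graph of $A$ versus in the graph of $\operatorname{Id}-A$. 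The key observation is that a minimal-length walk never traverses a self-loop --- deleting a self-loop from a walk yields a strictly shorter walk with the same endpoints --- so both minima are realised by walks using only off-diagonal edges, where the two graphs agree, and with the same target set $\hat J[A]=J[\operatorname{Id}-A]$; they therefore coincide, both being $+\infty$ (with the conventions of Definitions \ref{matrix_definitions}) exactly when no such walk exists, since self-loops do not affect reachability. Taking positive parts of the common suprema yields $\widehat{\mbox{con}}A=\mbox{con}[\operatorname{Id}-A]$.

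I do not anticipate any real obstacle. The only point requiring a moment's care is the self-loop reduction in the last identity, together with the vacuous edge cases (the set of trouble rows $\{i\notin\hat J[A]\}$, or the walk sets $\widehat P_i[A]$, being empty), all handled by the observation above.
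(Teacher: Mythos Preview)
Your argument is correct. The paper does not actually give a proof of this proposition; it merely points to the proof of \cite[Lem.2.22]{A}, so there is no in-paper argument to compare against. Your direct entrywise verification of the biconditional and the row-by-row identification of $\hat J[A]$ with $J[\operatorname{Id}-A]$ are exactly what one would expect, and your handling of the index equality via the observation that minimal-length walks never traverse self-loops (together with the correct treatment of the vacuous cases through $\sup\emptyset=-\infty$ and $\inf\emptyset=+\infty$) is clean and complete.
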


For the following theorem, recall the characterization of the spectral radius $\rho(A)=\inf\{\|A\|:\|\cdot\|\mbox{ is a matrix norm}\}$ and Gelfand's formula $\rho(A)=\lim_{n\to+\infty}\|A^n\|^{1/n}$, for any matrix norm $\|\cdot\|$. Note also that if $A$ is substochastic, then $A^n$ is also substochastic for any $n\in\mathbb N_0$, $\|A^n\|_{\infty}\leq 1$ and $\rho(A)\leq 1$. 
\begin{theorem}(\cite[Thm.2.5 and Cor.2.6]{A})
\label{contraction}
Suppose $A$ is substochastic. Then 
$$\widehat{\emph{\mbox{con}}}A=\inf\{n\in\mathbb N_0:\|A^{n+1}\|_\infty<1\}.$$ In particular, $\widehat{\emph{\mbox{con}}}A<+\infty$ if and only if $\rho(A)<1$.
\end{theorem}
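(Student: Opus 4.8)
The plan is to use the interpretation of a substochastic matrix $A$ as a sub-Markov transition kernel: with $\mathbf{1}$ denoting the all-ones vector, the $i$-th coordinate of $A^{n}\mathbf{1}$ is the row sum $\sum_{j}(A^{n})_{ij}$, i.e.\ the probability of ``surviving'' $n$ steps starting from $i$ (the one-step killing probability at $k$ being $1-\sum_{j}A_{kj}$). Since $A\ge 0$ and $A\mathbf{1}\le\mathbf{1}$, the sequence $(A^{n}\mathbf{1})_{n\ge 0}$ is coordinatewise non-increasing with values in $[0,1]$, and $\|A^{n}\|_\infty<1$ is precisely the assertion that every coordinate of $A^{n}\mathbf{1}$ is strictly less than $1$.

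For the first identity I would introduce, for each state $i$, the quantity $d(i)$ equal to the length of the shortest walk in $\mathrm{graph}\,A$ from $i$ to a vertex of $\hat J[A]$, with the conventions $d(i)=0$ if $i\in\hat J[A]$ and $d(i)=+\infty$ if no such walk exists; one checks directly from Definition~(D13) that $\widehat{\mathrm{con}}\,A=\sup_{i}d(i)$ (the $(\cdot)^{+}$ and $\inf\varnothing=+\infty$ conventions being absorbed by allowing $i\in\hat J[A]$ in the supremum). The core step is then to prove, by induction on $n\ge 0$, that $(A^{n}\mathbf{1})_i<1$ if and only if $d(i)\le n-1$. The base case $n=0$ is clear since $A^{0}\mathbf{1}=\mathbf{1}$. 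For the inductive step, split on whether $i\in\hat J[A]$: if so, $(A^{n+1}\mathbf{1})_i\le(A\mathbf{1})_i<1$ and $d(i)=0\le n$; if not, then $\sum_{j}A_{ij}=1$, so $(A^{n+1}\mathbf{1})_i=\sum_{j}A_{ij}(A^{n}\mathbf{1})_j$ is a genuine convex combination of numbers in $[0,1]$, hence strictly less than $1$ exactly when some neighbour $j$ of $i$ (an edge of $\mathrm{graph}\,A$) satisfies $(A^{n}\mathbf{1})_j<1$, which by the inductive hypothesis means $d(j)\le n-1$, i.e.\ $d(i)\le n$.

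Taking the maximum over $i$ turns this into $\|A^{n+1}\|_\infty<1\iff\widehat{\mathrm{con}}\,A\le n$, so $\{\,n\in\mathbb N_0:\|A^{n+1}\|_\infty<1\,\}=\{\,n\in\mathbb N_0:n\ge\widehat{\mathrm{con}}\,A\,\}$, whose infimum is $\widehat{\mathrm{con}}\,A$ (and equals $+\infty=\widehat{\mathrm{con}}\,A$ when that set is empty). For the ``in particular'' clause I would argue both directions from standard facts: if $\|A^{m}\|_\infty=c<1$ for some $m\ge 1$, then $\|A^{km}\|_\infty\le c^{k}$ and Gelfand's formula gives $\rho(A)\le c^{1/m}<1$; conversely $\rho(A)<1$ forces $A^{m}\to 0$, hence $\|A^{m}\|_\infty<1$ for some $m$, i.e.\ $\widehat{\mathrm{con}}\,A<+\infty$ by the first part.

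The delicate point is the bookkeeping around the boundary conventions: the rows of $\hat J[A]$ are not counted in the supremum defining $\widehat{\mathrm{con}}$ yet already have $(A^{n}\mathbf{1})_i<1$ at $n=1$, walks in Definition~(D9) are required to be nonempty, and one must handle both $\hat J[A]=\varnothing$ and $\hat J[A]$ equal to the whole vertex set; packaging everything through the single quantity $d(i)$ (with $d(i)=0$ on $\hat J[A]$) is what makes the induction uniform. Once the convex-combination observation is set up, the remaining verifications — monotonicity of the row sums and the Gelfand estimate — are routine.
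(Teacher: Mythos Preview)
Your argument is correct. The induction on $n$ for the equivalence $(A^{n}\mathbf{1})_i<1\iff d(i)\le n-1$ is clean, and the key observation---that for $i\notin\hat J[A]$ the entry $(A^{n+1}\mathbf{1})_i$ is a \emph{genuine} convex combination of the $(A^{n}\mathbf{1})_j$ over neighbours $j$---is exactly what drives the walk-length recursion $d(i)=1+\min_{j:A_{ij}>0}d(j)$. Your handling of the boundary conventions (the $(\cdot)^+$, $\sup\varnothing=-\infty$, nonempty walks) via the single quantity $d(i)$ with $d(i)=0$ on $\hat J[A]$ is tidy and matches the definition in~(D11) precisely. The Gelfand argument for the ``in particular'' clause is standard and correctly stated.

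Note, however, that the paper does not supply its own proof of this theorem: it is quoted directly from the cited reference~\cite[Thm.~2.5 and Cor.~2.6]{A}, so there is no in-paper argument to compare against. Your probabilistic/convex-combination proof is essentially the natural one and is in the same spirit as the arguments in that reference; the only difference is packaging (you phrase things via $A^{n}\mathbf{1}$ and $d(i)$, whereas one can equivalently argue via the row sums of $A^{n}$ and the layered structure of $\mathrm{graph}\,A$), with no substantive divergence.
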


The indices of contraction and connectivity can be generalized in a natural way to sequences $(A_k)\subseteq\mathbb R^{N\times N}$ by considering walks $i_1\to i_2\to\dots$ such that $i_k\to i_{k+1}$ is an edge in graph$A_k$ (see \cite[App.B]{A} for more details). Theorem \ref{contraction} extends in the following way:

\begin{theorem}(\cite[Thm.B.2]{A})
\label{sequence_contraction}
Suppose $(A_k)$ are substochastic matrices and consider the sequence of products $(B_k)$, where $B_k\defeq A_1\dots A_k$. Then,
$$\widehat{\emph{\mbox{con}}}\big[(A_k)\big]=\inf\{k\in\mathbb N_0:\|B_{k+1}\|_\infty<1\}.$$
Equivalently, $1=\|B_0\|_{\infty}=\dots=\|B_\alpha\|_{\infty}>\|B_{\alpha+1}\|_{\infty}\geq \dots$, for $\alpha=\widehat{\emph{\mbox{con}}}\big[(A_k)\big]$.\footnote{The equivalence is due to $(A_k)$ being substochastic matrices, which implies that $\|B_k\|_{\infty}\leq 1$ for all $k$. Note that the case $\alpha=+\infty$ reads $\|B_k\|_{\infty}=1$ for all $k$. } 
\end{theorem}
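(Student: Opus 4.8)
The plan is to collapse the whole statement into a single telescoping identity for the row sums of the partial products $B_k=A_1\cdots A_k$. For $\ell\in\mathbb N$ write $r^{(\ell)}_j\defeq\sum_p(A_\ell)_{jp}\in[0,1]$ for the $j$-th row sum of $A_\ell$, so that $j\in\hat J[A_\ell]$ precisely when $r^{(\ell)}_j<1$, and write $s^{(k)}_i\defeq\sum_j(B_k)_{ij}$ for the $i$-th row sum of $B_k$, with $s^{(0)}_i=1$ since $B_0=\operatorname{Id}$. Expanding $B_k$ entrywise and summing over the last index, $s^{(k)}_i=\sum\prod_{\ell=1}^{k}(A_\ell)_{i_\ell i_{\ell+1}}$, summed over all walks $i=i_1\to\cdots\to i_{k+1}$ whose $\ell$-th edge lies in graph$A_\ell$. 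Since each $A_\ell\geq 0$ has row sums $\leq 1$, and $B_{k+1}=B_kA_{k+1}$, peeling off the last factor gives $s^{(k)}_i-s^{(k+1)}_i=\sum_q(B_k)_{iq}\big(1-r^{(k+1)}_q\big)\geq 0$; telescoping from $0$ to $k$ then yields
\begin{equation*}
1-s^{(k+1)}_i=\sum_{m=0}^{k}\;\sum_{i=i_1\to\cdots\to i_{m+1}}\Big(\prod_{\ell=1}^{m}(A_\ell)_{i_\ell i_{\ell+1}}\Big)\big(1-r^{(m+1)}_{i_{m+1}}\big),
\end{equation*}
the inner sum running over length-$m$ walks from $i$ whose $\ell$-th edge lies in graph$A_\ell$ (the $m=0$ term being just $1-r^{(1)}_i$). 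Every summand is nonnegative, and one is strictly positive precisely when $i$ admits a walk of length $m$, compatible with the sequence of graphs, whose terminal vertex lies in $\hat J[A_{m+1}]$ — a ``killing walk'' of length $m$ in the sense of the sequential index \cite[App.B]{A}.

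Granting this identity, the rest is short. Because all summands are nonnegative, $s^{(k+1)}_i<1$ if and only if at least one is positive, i.e.\ if and only if $i$ admits a killing walk of length at most $k$; hence $\|B_{k+1}\|_\infty<1$ if and only if \emph{every} state does, which by the definition of the sequential index of contraction is exactly $\widehat{\mbox{con}}\big[(A_k)\big]\leq k$. Taking the infimum over $k\in\mathbb N_0$ gives the claimed identity $\widehat{\mbox{con}}\big[(A_k)\big]=\inf\{k\in\mathbb N_0:\|B_{k+1}\|_\infty<1\}$, both sides equal to $+\infty$ in the degenerate case where some state has no killing walk at all (then $s^{(k)}_i=1$, hence $\|B_k\|_\infty=1$, for all $k$). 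For the ``equivalently'' reformulation, monotonicity $s^{(k+1)}_i\leq s^{(k)}_i$ for every $i$ gives $\|B_{k+1}\|_\infty\leq\|B_k\|_\infty$; combining this with $\|B_k\|_\infty\leq 1$ and with $\alpha\defeq\widehat{\mbox{con}}\big[(A_k)\big]$ just identified yields $1=\|B_0\|_\infty=\cdots=\|B_\alpha\|_\infty>\|B_{\alpha+1}\|_\infty\geq\cdots$.

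The one load-bearing step is the telescoping identity together with the faithful translation of ``a row sum drops below $1$'' into ``a bounded-length killing walk exists''. For a single matrix this is pure bookkeeping — it is what underlies Theorem \ref{contraction}, which is recovered here by taking $A_k\equiv A$ — so the only place real care is needed, and the main place the argument could slip, is the alignment of time indices in the sequential setting: an edge is drawn through graph$A_\ell$ at step $\ell$, whereas the deficit $1-r^{(m+1)}_{i_{m+1}}$ that finally kills an $m$-edge walk is read from $A_{m+1}$, so one must check that the definition of sequential killing walks in \cite[App.B]{A} pairs these up in exactly this way. Once the indexing is pinned down, nothing beyond nonnegativity and the bound $r^{(\ell)}_j\leq 1$ enters.
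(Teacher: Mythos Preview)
The paper does not prove this theorem; it is quoted from \cite[Thm.B.2]{A} and stated in Appendix~\ref{appendix:matrices} without argument, so there is no in-paper proof to compare against. Your telescoping-of-row-sums approach is correct and is in fact the natural direct argument: the identity $s^{(k)}_i-s^{(k+1)}_i=\sum_q(B_k)_{iq}\bigl(1-r^{(k+1)}_q\bigr)$ and its iterated form do exactly the work required, and your deduction that $s^{(k+1)}_i<1$ is equivalent to the existence of a length-$\leq k$ killing walk from $i$ is sound once the sequential index is defined so that a walk $i=i_1\to\cdots\to i_{m+1}$ with $\ell$-th edge in $\operatorname{graph}A_\ell$ and $i_{m+1}\in\hat J[A_{m+1}]$ counts as a killing walk of length $m$.

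Your own caveat is the right one: the only point needing external input is that the definition of $\widehat{\mbox{con}}\bigl[(A_k)\bigr]$ in \cite[App.B]{A} indeed uses this time-indexing convention (edge $\ell$ from $A_\ell$, deficit read at step $m+1$). The present paper only sketches the sequential generalization informally, so you are relying on \cite{A} for the precise definition anyway; but assuming that alignment, nothing is missing.
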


\begin{proof}[\textbf{Proof of Lemma \emph{\ref{coefficients_properties}}}]
\phantomsection
\label{proof:lemma}

For briefness, we omit the dependence on $\varphi,\overline\varphi$ from the notation.

It follows from Proposition \ref{FPPI-like_result} that $\mathbb A^{-1}\mathbb B\geq 0$, since $\mathbb B\geq 0$ and $\mathbb A$ is monotone. To see that its rows sum up to one, let $\boldsymbol{1}\in\mathbb R^\grid$ be the vector of ones. It is easy to check using $\mathbb A$ and $\mathbb B$ explicit expressions and \ref{A1}--\ref{A2}, that $\mathbb A\mathbb A^{-1}\mathbb B\boldsymbol{1}=\mathbb B\boldsymbol{1}\leq \mathbb A\boldsymbol{1}$, which implies $\mathbb A^{-1}\mathbb B\boldsymbol{1}\leq \boldsymbol{1}$. This proves that $\mathbb A^{-1}\mathbb B$ is substochastic.

The WDD $\mbox{L}_0$-property of $\mathbb A-\mathbb B$ is due to \ref{A1}. To verify this, note first that by straightforward computations, if a matrix $A_0$ has the previous property then $SA_0S$ must also have it. Then simply observe that the $i$-th row of $\mathbb A-\mathbb B$ is equal to the $i$-th row of $\operatorname{Id}-B(\overline\delta)$ (if $x_i\in\overline I$), or $-L$ (if $x_i\in{\overline I}^c\cap (-I)^c$) or $\operatorname{Id}-SB(\delta)S=S(\operatorname{Id}-B(\delta))S$ (if $x_i\in -I$), all of which are WDD $\mbox{L}_0$-matrices by \ref{A1}. 

We prove now the last statement, $\widehat{\mbox{con}}\big[\mathbb A^{-1}\mathbb B\big]\leq\mbox{con}\big[\mathbb A-\mathbb B\big]$. Note that by Proposition \ref{substochastic_WDD_link}, $\mathbb A^{-1}\mathbb B$ and $\operatorname{Id}-\mathbb A^{-1}\mathbb B$ share the same sets of non-trouble rows, directed graphs (except possibly for self loops) and $\widehat{\mbox{con}}\big[\mathbb A^{-1}\mathbb B\big]=\mbox{con}\big[\operatorname{Id}-\mathbb A^{-1}\mathbb B\big]$. Thus, we want see that for any non-SDD row of $\operatorname{Id}-\mathbb A^{-1}\mathbb B$, if an SDD row of $\mathbb A-\mathbb B$ can be reached by a walk in the graph of the latter, then there must be a shorter walk to an SDD row in the graph of the former. We proceed by showing several results:
\bigskip

\emph{(i) There is a walk from $x_i$ to $x_j$ in \emph{graph}$\mathbb A$ if and only if $\mathbb A^{-1}_{ij}>0$.\footnote{The reciprocal implication is shown merely for the sake of completeness.} In particular, $\mathbb A^{-1}$ has strictly positive diagonal elements.}\emph{(More in general, both statements are true for any WCDD $\mbox{L}_0$-matrix, with the same proof)}

Recall first that $\mathbb A$ is a WCDD $\mbox{L}_0$-matrix and so it must have positive diagonal elements, or there would be a null row disconnected from every SDD row. Thus, $x_i\to x_i$ is always in graph$\mathbb A$ and we must show that $\mathbb A^{-1}_{ii}>0$. If $\mathbb A^{-1}_{ii}=0$, then by monotonicity and $\mbox{L}_0$-property, it would be $1=\big[\mathbb A\mathbb A^{-1}\big]_{ii}=\sum_{j\neq i}\mathbb A_{ij}\mathbb A^{-1}_{ji}\leq 0$.

Suppose now $i\neq j$ and that $x_i$ is connected to $x_j$ by a walk in graph$\mathbb A$. We proceed by induction in the length of the walk. If the length is one, suppose by contradiction that $\mathbb A^{-1}_{ij}=0$. Then $0=\big[\mathbb A\mathbb A^{-1}\big]_{ij}=\sum_{k\neq i}\mathbb A_{ik}\mathbb A^{-1}_{kj}\leq \mathbb A_{ij}\mathbb A^{-1}_{jj}<0$ by assumption and the diagonal case. For an arbitrary walk $x_i\to y_1\to\dots\to y_n=x_j$, assume without loss of generality that it has no closed subwalks. Then $y_1=x_h$ for some $h\neq i,j$. Once again if $\mathbb A^{-1}_{ij}=0$, this would give $0=\big[\mathbb A\mathbb A^{-1}\big]_{ij}=\sum_{k\neq i}\mathbb A_{ik}\mathbb A^{-1}_{kj}\leq \mathbb A_{ih}\mathbb A^{-1}_{hj}<0$ by induction.

For the reciprocal, assume now that $j\neq i$ and $\mathbb A^{-1}_{ij}>0$. By the adjoint formula of the inverse and Leibniz (or permutations) formula of the determinant, there must exist some permutation $\sigma:\{-N,\dots,N\}\backslash\{j\}\to\{-N,\dots,N\}\backslash\{i\}$ such that $\prod_{k\neq j} \mathbb A^{-1}_{k\sigma(k)}\neq 0$. Hence, $x_i\to x_{\sigma(i)}\to x_{\sigma^2(i)}\to...\to x_j$ is a walk in graph$\mathbb A$.
\bigskip

\emph{(ii) $\operatorname{Id} - \mathbb A^{-1}\mathbb B$ has no less SDD rows than $\mathbb A-\mathbb B$. Further, $J\big[\mathbb A-\mathbb B\big]\subseteq J\big[\operatorname{Id} - \mathbb A^{-1}\mathbb B\big]$:}

\noindent Consider some $x_i\in J\big[\operatorname{Id} - \mathbb A^{-1}\mathbb B\big]^c$, i.e., $\sum_j [\operatorname{Id} - \mathbb A^{-1}\mathbb B]_{ij}=0$. We want to see that $\sum_j[\mathbb A-\mathbb B]_{ij}=0$.  We have $0 = \sum_j\big[\operatorname{Id}-\mathbb A^{-1}\mathbb B\big]_{ij}=\sum_j\big[\mathbb A^{-1}(\mathbb A-\mathbb B)\big]_{ij} = \sum_k \mathbb A^{-1}_{ik}\sum_{j}\big[\mathbb A-\mathbb B\big]_{kj}$. Since $\mathbb A^{-1}_{ik}\mbox{ and }\sum_{j}\big[\mathbb A-\mathbb B\big]_{kj}$ are non-negative for all $k$ (monotonicity and $L_0$ WDD property, respectively), one of the two must be zero for each $k$. But $\mathbb A^{-1}_{ii}>0$ by \emph{(i)}, giving what we wanted. 
\bigskip

\emph{(iii) If $x_i\in -I$ and there is an edge $x_i\to x_j$ in \emph{graph}$\big(\mathbb A-\mathbb B\big)$ for some $j\neq i$, then this is also an edge in \emph{graph}$\big(\operatorname{Id} - \mathbb A^{-1}\mathbb B\big)$:}

$
\big[\operatorname{Id} - \mathbb A^{-1}\mathbb B\big]_{ij}=-\big[\mathbb A^{-1}\mathbb B\big]_{ij}=-\sum_k \mathbb A^{-1}_{ik}\mathbb B_{kj}\leq 
-\mathbb A^{-1}_{ii}\mathbb B_{ij}=\mathbb A^{-1}_{ii}\big[\mathbb A - \mathbb B\big]_{ij}<0,
$
where the last equality is due to $x_i\in -I$ and $j\neq i$, and the strict inequality is by assumption and \emph{(i)}.\bigskip

\emph{(iv) If there is a walk in \emph{graph}$\big(\mathbb A-\mathbb B\big)$, $x_i=y_0\to\dots\to y_n\to x_j\in J\big[\mathbb A-\mathbb B\big]$, with $y_m\in\overline I$ for all $m$, then $x_i\in J\big[\operatorname{Id} - \mathbb A^{-1}\mathbb B\big]$:}

Since $\mathbb A-\mathbb B$ and $\mathbb A$ agree for every row indexed in $\overline I$, $\mathbb A^{-1}_{ij}>0$ by \emph{(i)}. It follows that $\sum_k \big[\operatorname{Id} - \mathbb A^{-1}\mathbb B\big]_{ik}=\sum_k \big[\mathbb A^{-1}(\mathbb A - \mathbb B)\big]_{ik}=\sum_h \mathbb A^{-1}_{ih} \sum_k \big[\mathbb A- \mathbb B\big]_{hk}\geq \mathbb A^{-1}_{ij} \sum_k \big[\mathbb A- \mathbb B\big]_{jk}>0$. \bigskip

\emph{(v) If there is a walk in \emph{graph}$\big(\mathbb A-\mathbb B\big)$, $x_i=y_0\to\dots\to y_n\to x_j\to x_h$, with $y_m\in\overline I$ for all $m$, $x_j\in -I$ and $h\neq i,j$, then $x_i\to x_h$ is an edge in \emph{graph}$\big(\operatorname{Id} - \mathbb A^{-1}\mathbb B\big)$:}

Using that $\mathbb A-\mathbb B\leq\mathbb A$, with equality for rows in $\overline I$, and $\mathbb A^{-1}_{ij}\big[\mathbb A - \mathbb B\big]_{jh}<0$ by assumption and \emph{(i)}, we have $\big[\operatorname{Id} - \mathbb A^{-1}\mathbb B\big]_{ih}=\big[\mathbb A^{-1}(\mathbb A - \mathbb B)\big]_{ih}=\sum_{k\neq j} \mathbb A^{-1}_{ik}\big[\mathbb A - \mathbb B\big]_{kh} + \mathbb A^{-1}_{ij}\big[\mathbb A - \mathbb B\big]_{jh} < \sum_{k\neq j} \mathbb A^{-1}_{ik}{\mathbb A}_{kh} = \big[\mathbb A^{-1}\mathbb A\big]_{ih}=0$, where we have used that $\mathbb A_{jh}=\operatorname{Id}_{jh}=0$ for $x_j\in-I,\ j\neq h$.
\bigskip

We turn now to the original claim. Let $x_i\in J\big[\operatorname{Id} - \mathbb A^{-1}\mathbb B\big]^c$ such that there is a walk $\omega$ from $x_i$ to $J\big[\mathbb A - \mathbb B\big]$ in graph$\big(\mathbb A - \mathbb B\big)$. Assume without loss of generality that $\omega$ has no closed subwalks. We want to find a shorter walk to $J\big[\operatorname{Id} - \mathbb A^{-1}\mathbb B\big]$ in graph$\big(\operatorname{Id} - \mathbb A^{-1}\mathbb B\big)$. 

Due to \emph{(ii)}, it must be either $x_i\in\overline I$ or $x_i\in -I$, since states in $\overline I^c\cap (-I)^c$ correspond to SDD rows. In the first case, \emph{(iv)} implies that $\omega$ must have the form $\omega:x_i=y_0\to\dots \to y_n\to x_j\to x_h\to\dots$ for some $x_j\in (-I)\cap J\big[\mathbb A - \mathbb B\big]^c$, $y_m\in\overline I$ for all $m$ and $h\neq j,i$ (recall that $\omega$ has no closed subwalks). Then \emph{(v)} tells us that one can skip states and walk from $x_i$ to $x_h$ with a single step, through graph$\big(\operatorname{Id} - \mathbb A^{-1}\mathbb B\big)$. Afterwards, one can continue with the same path for states in $-I$ (if any), in light of \emph{(iii)}. The latter subwalk will either reach $J\big[\mathbb A - \mathbb B\big]$ (hence, $J\big[\operatorname{Id} - \mathbb A^{-1}\mathbb B\big]$ as per \emph{(ii)}) or go back to $\overline I$ and repeat the same procedure. Since there are finitely many states, $J\big[\mathbb A - \mathbb B\big]$ must eventually be reached in this way. 

The case of $x_i\in -I$ follows with the same reasoning. 
\end{proof}

\bibliographystyle{amsalpha}
\bibliography{references}

\newcommand{\etalchar}[1]{$^{#1}$}
\providecommand{\bysame}{\leavevmode\hbox to3em{\hrulefill}\thinspace}
\providecommand{\MR}{\relax\ifhmode\unskip\space\fi MR }
\providecommand{\MRhref}[2]{%
  \href{http://www.ams.org/mathscinet-getitem?mr=#1}{#2}
}
\providecommand{\href}[2]{#2}
\begin{thebibliography}{ABM{\etalchar{+}}19}

\bibitem[ABC{\etalchar{+}}19]{ABCCV}
R.~A{\"\i}d, M.~Basei, G.~Callegaro, L.~Campi, and T.~Vargiolu,
  \emph{Nonzero-sum stochastic differential games with impulse controls: a
  verification theorem with applications}, Mathematics of Operations Research
  (2019).

\bibitem[ABL18]{ABL}
P.~Azimzadeh, E.~Bayraktar, and G.~Labahn, \emph{Convergence of implicit
  schemes for {Hamilton--Jacobi--Bellman} quasi-variational inequalities}, SIAM
  Journal on Control and Optimization \textbf{56} (2018), no.~6, 3994--4016.

\bibitem[ABM{\etalchar{+}}19]{ABMZZ}
R.~A{\"\i}d, F.~Bernal, M.~Mnif, D.~Zabaljauregui, and J.P. Zubelli, \emph{A
  policy iteration algorithm for nonzero-sum stochastic impulse games}, ESAIM:
  Proceedings and Surveys \textbf{65} (2019), 27--45.

\bibitem[AF16]{AF}
P.~Azimzadeh and P.A. Forsyth, \emph{Weakly chained matrices, policy iteration,
  and impulse control}, SIAM Journal on Numerical Analysis \textbf{54} (2016),
  no.~3, 1341--1364.

\bibitem[ARS17]{ARS}
A.~Altarovici, M.~Reppen, and H.M. Soner, \emph{Optimal consumption and
  investment with fixed and proportional transaction costs}, SIAM Journal on
  Control and Optimization \textbf{55} (2017), no.~3, 1673--1710.

\bibitem[Azi17]{A0}
P.~Azimzadeh, \emph{Impulse control in finance: numerical methods and viscosity
  solutions}, Ph.D. thesis, University of Waterloo, 2017.

\bibitem[Azi19a]{A}
P~Azimzadeh, \emph{A fast and stable test to check if a weakly diagonally
  dominant matrix is a nonsingular m-matrix}, Mathematics of Computation
  \textbf{88} (2019), no.~316, 783--800.

\bibitem[Azi19b]{A1}
P.~Azimzadeh, \emph{A zero-sum stochastic differential game with impulses,
  precommitment, and unrestricted cost functions}, Applied Mathematics \&
  Optimization \textbf{79} (2019), no.~2, 483--514.

\bibitem[Bas19]{B}
M.~Basei, \emph{Optimal price management in retail energy markets: an impulse
  control problem with asymptotic estimates}, Mathematical Methods of
  Operations Research \textbf{89} (2019), no.~3, 355--383.

\bibitem[BCG19]{BCG}
M.~Basei, H.~Cao, and X.~Guo, \emph{Nonzero-sum stochastic games with impulse
  controls}, arXiv preprint arXiv:1901.08085 (2019).

\bibitem[BMZ09]{BMZ}
O.~Bokanowski, S.~Maroso, and H.~Zidani, \emph{Some convergence results for
  {Howard's} algorithm}, SIAM Journal on Numerical Analysis \textbf{47} (2009),
  no.~4, 3001--3026.

\bibitem[BS91]{BS}
G.~Barles and P.E. Souganidis, \emph{Convergence of approximation schemes for
  fully nonlinear second order equations}, Asymptotic analysis \textbf{4}
  (1991), no.~3, 271--283.

\bibitem[CCTZ06]{CCTZ}
A.~Cadenillas, T.~Choulli, M.~Taksar, and L.~Zhang, \emph{Classical and impulse
  stochastic control for the optimization of the dividend and risk policies of
  an insurance firm}, Mathematical Finance: An International Journal of
  Mathematics, Statistics and Financial Economics \textbf{16} (2006), no.~1,
  181--202.

\bibitem[Cli07]{Cl}
S.~Clift, \emph{Linear and non-linear monotone methods for valuing financial
  options under two-factor, jump-diffusion models}, Ph.D. thesis, 2007.

\bibitem[CMS07]{CMS}
J.-P. Chancelier, M.~Messaoud, and A.~Sulem, \emph{A policy iteration algorithm
  for fixed point problems with nonexpansive operators}, Mathematical Methods
  of Operations Research \textbf{65} (2007), no.~2, 239--259.

\bibitem[C{\O}S02]{COS}
J.-P. Chancelier, B.~{\O}ksendal, and A.~Sulem, \emph{Combined stochastic
  control and optimal stopping, and application to numerical approximation of
  combined stochastic and impulse control}, Tr. Mat. Inst. Steklova
  \textbf{237} (2002), no.~0, 149--172.

\bibitem[Cos13]{C}
A.~Cosso, \emph{Stochastic differential games involving impulse controls and
  double-obstacle quasi-variational inequalities}, SIAM Journal on Control and
  Optimization \textbf{51} (2013), no.~3, 2102--2131.

\bibitem[CWW13]{CWW}
D.~Chang, H.~Wang, and Z.~Wu, \emph{Maximum principle for non-zero sum
  differential games of bsdes involving impulse controls}, Proceedings of the
  32nd Chinese Control Conference, IEEE, 2013, pp.~1564--1569.

\bibitem[CZ99]{CZ}
A~Cadenillas and F~Zapatero, \emph{Optimal central bank intervention in the
  foreign exchange market}, Journal of Economic theory \textbf{87} (1999),
  no.~1, 218--242.

\bibitem[DAFM18]{DFM}
T.~De~Angelis, G.~Ferrari, and J.~Moriarty, \emph{Nash equilibria of threshold
  type for two-player nonzero-sum games of stopping}, The Annals of Applied
  Probability \textbf{28} (2018), no.~1, 112--147.

\bibitem[EAM18]{EM}
B.~El~Asri and S.~Mazid, \emph{Zero-sum stochastic differential game in finite
  horizon involving impulse controls}, Applied Mathematics \& Optimization
  (2018), 1--33.

\bibitem[Ega08]{E}
M.~Egami, \emph{A direct solution method for stochastic impulse control
  problems of one-dimensional diffusions}, SIAM Journal on Control and
  Optimization \textbf{47} (2008), no.~3, 1191--1218.

\bibitem[EH88]{EH}
J.F. Eastham and K.J. Hastings, \emph{Optimal impulse control of portfolios},
  Mathematics of Operations Research \textbf{13} (1988), no.~4, 588--605.

\bibitem[FK19]{FK}
G.~Ferrari and T.~Koch, \emph{On a strategic model of pollution control},
  Annals of Operations Research \textbf{275} (2019), no.~2, 297--319.

\bibitem[FL07]{FL}
P.A. Forsyth and G.~Labahn, \emph{Numerical methods for controlled
  {Hamilton-Jacobi-Bellman PDEs} in finance}, Journal of Computational Finance
  \textbf{11} (2007), no.~2, 1.

\bibitem[HFL12]{HFL1}
Y.~Huang, P.A. Forsyth, and G.~Labahn, \emph{Combined fixed point and policy
  iteration for {Hamilton--Jacobi--Bellman} equations in finance}, SIAM Journal
  on Numerical Analysis \textbf{50} (2012), no.~4, 1861--1882.

\bibitem[HFL13]{HFL2}
Y.~Huang, P.A. Forsyth, and G~Labahn, \emph{Inexact arithmetic considerations
  for direct control and penalty methods: American options under jump
  diffusion}, Applied Numerical Mathematics \textbf{72} (2013), 33--51.

\bibitem[Hig02]{H}
N.J. Highman, \emph{Accuracy and stability of numerical algorithms}, SIAM,
  2002.

\bibitem[HS88]{HS}
J.C. Harsanyi and R.~Selten, \emph{A general theory of equilibrium selection in
  games}, vol.~1, The MIT Press, 1988.

\bibitem[JP93]{J}
M.~Jeanblanc-Picqu{\'e}, \emph{Impulse control method and exchange rate},
  Mathematical Finance \textbf{3} (1993), no.~2, 161--177.

\bibitem[Kor99]{K}
R.~Korn, \emph{Some applications of impulse control in mathematical finance},
  Mathematical Methods of Operations Research \textbf{50} (1999), no.~3,
  493--518.

\bibitem[M{\O}98]{MO}
G.~Mundaca and B.~{\O}ksendal, \emph{Optimal stochastic intervention control
  with application to the exchange rate}, Journal of Mathematical Economics
  \textbf{2} (1998), no.~29, 225--243.

\bibitem[{\O}S09]{OS}
B.~{\O}ksendal and A.~Sulem, \emph{Applied stochastic control of jump
  processes}, 3rd ed., Springer, 2009.

\bibitem[RZ19]{RZ}
C.~Reisinger and Y.~Zhang, \emph{A penalty scheme for monotone systems with
  interconnected obstacles: convergence and error estimates}, SIAM Journal on
  Numerical Analysis \textbf{57} (2019), no.~4, 1625--1648.

\bibitem[RZ20]{RZ1}
C.~Reisinger and Y~Zhang, \emph{Error estimates of penalty schemes for
  quasi-variational inequalities arising from impulse control problems}, SIAM
  Journal on Control and Optimization \textbf{58} (2020), no.~1, 243--276.

\bibitem[SC74]{Sh}
P.N. Shivakumar and K.H. Chew, \emph{A sufficient condition for nonvanishing of
  determinants}, Proceedings of the American mathematical society (1974),
  63--66.

\bibitem[Sey09]{S}
R.C. Seydel, \emph{Existence and uniqueness of viscosity solutions for {QVI}
  associated with impulse control of jump-diffusions}, Stochastic Processes and
  their Applications \textbf{119} (2009), no.~10, 3719--3748.

\bibitem[WF08]{WF}
J.~Wang and P.A. Forsyth, \emph{Maximal use of central differencing for
  {Hamilton--Jacobi--Bellman PDEs} in finance}, SIAM Journal on Numerical
  Analysis \textbf{46} (2008), no.~3, 1580--1601.

\bibitem[WW17]{WW}
S.~Wang and Z.~Wu, \emph{Stochastic maximum principle for optimal control
  problems of forward-backward delay systems involving impulse controls},
  Journal of Systems Science and Complexity \textbf{30} (2017), no.~2,
  280--306.

\end{thebibliography}
\addcontentsline{toc}{section}{References}

\end{document}